\documentclass[]{amsart}
\usepackage[utf8]{inputenc}
\usepackage{amsmath}
\usepackage{amssymb}
\usepackage{amsthm}
\usepackage{tikz}
\usepackage{xcolor}
\usepackage{hyperref}
\usepackage{url}
\usepackage{braket}
\usepackage{nccmath}
\usepackage{stmaryrd}
\usepackage{xfrac}
\usetikzlibrary{patterns,snakes}
\usetikzlibrary{arrows}
\usetikzlibrary{decorations.markings}
\DeclareMathAlphabet\mathbfcal{OMS}{cmsy}{b}{n}

\hypersetup{
 colorlinks=true,       
    linkcolor=cyan,          
    citecolor=green,        
    filecolor=magenta,      
   urlcolor=cyan           
   }

\numberwithin{equation}{section}
\numberwithin{figure}{section}

\allowdisplaybreaks
\tolerance=2400

\providecommand{\customgenericname}{}
\newcommand{\newcustomtheorem}[2]{%
	\newenvironment{#1}[1]
	{%
		\renewcommand\customgenericname{#2}%
		\renewcommand\theinnercustomgeneric{##1}%
		\innercustomgeneric
	}
	{\endinnercustomgeneric}
}

\theoremstyle{plain}
\newtheorem{lemma}{Lemma}[section]
\newtheorem{proposition}[lemma]{Proposition}
\newtheorem{corollary}[lemma]{Corollary}

\newtheorem{theorem}[lemma]{Theorem}

\theoremstyle{definition}
\newtheorem{definition}[lemma]{Definition}
\newtheorem{remark}[lemma]{Remark}

\newtheorem{example}[lemma]{Example}

\newtheorem*{theorem*}{Theorem}
\newcustomtheorem{customthm}{Theorem}
\newcustomtheorem{customlemma}{Lemma}

\newcommand{\R}{\mathbb{R}}

\newcommand{\N}{\mathbb{N}}

\DeclareSymbolFont{bbold}{U}{bbold}{m}{n}
\DeclareSymbolFontAlphabet{\mathbbold}{bbold}

\renewcommand{\P}{\mathbb{P}}
\newcommand{\E}{\mathbb{E}}

\newcommand{\F}{\mathcal{F}}


\newcommand{\chain}{\mathcal{C}}
\newcommand{\vchain}{\mathbfcal{C}}
\newcommand{\vdist}{\mathbfcal{D}}
\newcommand{\uniform}{\text{U}}
\newcommand{\vuniform}{\textbf{U}}
\newcommand{\ex}[1]{\mathrm{ex}(#1)}
\newcommand{\tail}{\mathcal{T}}


\begin{document}

\title[A Zero-One Law for VMCs]{A Zero-One Law for\\Virtual Markov Chains}
\author{Adam Quinn Jaffe}
\address{Department of Statistics\\
	University  of California\\ 
	367 Evans Hall \#3860\\
	Berkeley, CA 94720-3860 \\
	U.S.A.}
\email{aqjaffe@berkeley.edu}
\thanks{This material is based upon work for which AQJ was supported by the National Science Foundation Graduate Research Fellowship under Grant No. DGE 1752814.}
\keywords{virtual Markov chain, Markov chain, zero-one law, projective limit, tail $\sigma$-algebra, Martin boundary, balayage, Choquet theory, representing matrix}
\subjclass[2010]{Primary: 60F20, 60J10; Secondary: 46A55}
\date{\today}

\maketitle

\begin{abstract}
A virtual Markov chain (VMC) is a sequence $\{X_N\}_{N=0}^{\infty}$ of Markov chains (MCs) coupled together on the same probability space such that $X_N$ has state space $\{0,1,\ldots, N\}$ and such that removing all instances of $N~+~1$ from the sample path of $X_{N+1}$ results in the sample path of $X_N$ almost surely.
In this paper, we prove an exact characterization of the triviality of the $\sigma$-algebra $\bigcap_{N=0}^{\infty}\sigma(X_N,X_{N+1},\ldots)$.
The main tool for doing this is a decomposition theorem that the $\sigma$-algebra generated by a VMC is equal to the $\sigma$-algebra generated by a certain countably infinite collection of independent constituent MCs.
These constituents are so-called staircase MCs (SMCs), which are defined to be inhomoheneous Markov chains on the non-negative integers which transition only by holding or by jumping to a value equal to the current index.
We also develop some general aspects of the theory of SMCs, including a connection with some classical but very much under-appreciated aspects of convex analysis.
\end{abstract}


\section{Introduction}\label{sec:intro}

The \textit{virtual Markov chains (VMCs)} were introduced in \cite{EvansJaffeVMC} as a projective limit of Markov chains (MCs) on finite state spaces, in the same way that the \textit{virtual permutations} were introduced in \cite{VirtualPermutations} as a projective limit of finite permutations.
Roughly speaking, a VMC is a sequence $\boldsymbol{X} = \{X_N\}_{N\in\N}$ of MCs coupled together on the same probability space such that $X_N$ has state space $\{0,1,\ldots N\}$ and such that removing all instances of $N+1$ from the sample path of $X_{N+1}$ results in the sample path of $X_N$ almost surely.

The indices $N\in\N$ are called \textit{levels}, and the \textit{tail $\sigma$-algebra} with respect to the levels is defined to be
\begin{equation*}
\tail(\boldsymbol{X}) := \bigcap_{N\in\N}\sigma(X_N,X_{N+1},\ldots).
\end{equation*}
This is not the usual $\sigma$-algebra of events depending on arbitrary large times; rather, it is the $\sigma$-algebra of events depending on arbitrary large levels.
The main result of this work (Theorem~\ref{thm:general-ZOL}) is a simple characterization of the triviality of $\tail(\boldsymbol{X})$.

Let us emphasize that this question is not very interesting for certain simple VMCs.
Indeed, observe that a MC $X$ on the state space $\N$ naturally gives rise to a VMC $\boldsymbol{X}=\{X_N\}_{N\in\N}$ by defining the sample path of $X_N$ to be the result of removing all elements of $\{N+1,N+2,\ldots\}$ from the sample path of $X$; a VMC arising in this way is called \textit{classical}.
Then, it is straightforward to show (see Proposition~\ref{prop:CMC-trivial}) that for any classical VMC $\boldsymbol{X}$ the tail $\tail(\boldsymbol{X})$ is trivial if and only if $\boldsymbol{X}$ is itself trivial.
Nonetheless, for most VMCs $\boldsymbol{X}$ are not classical, we demonstrate many examples where $\tail(\boldsymbol{X})$ is trivial but $\boldsymbol{X}$ is non-trivial.

Towards stating the main result, we recall the representation theorem of \cite{EvansJaffeVMC} that the law of any VMC is represented by a pair of a \textit{virtual initial distribution (VID)} $\boldsymbol{\nu}$ and a  \textit{virtual transition matrix (VTM)} $\boldsymbol{K}$.
Roughly speaking, a VID is a sequence $\{\nu_N\}_{N\in\N}$ where each $\nu_N$ is a probability measure on $\{0,1,\ldots N\}$, a VTM is a sequence $\{K_N\}_{N\in\N}$ where each $K_N$ is a transition matrix on the state space $\{0,1,\ldots N\}$.
Moreover, there are appropriate notions of \textit{projectivity} that must be satisfied among the levels of the VID and VTM and of \textit{compatibility} that must be satisfied between the VID and the VTM; we will precisely state these notions later.
We write $\vdist(\boldsymbol{K})$ for the space of VIDs that are compatible with the VTM $\boldsymbol{K}$.
Of course, $\nu_N$ and $K_N$ are nothing more than the initial distribution and transition matrix of $X_N$, respectively.

Let us also recall some aspects of convex analysis discussed in \cite{EvansJaffeVMC}.
It was shown that $\vdist(\boldsymbol{K})$ is a compact convex set, so we can write $\ex{\vdist(\boldsymbol{K})}$ for the set of its extreme points.
It was also shown that for each $a\in\N$ there is a unique element $\delta_a^{\boldsymbol{K}}\in \ex{\vdist(\boldsymbol{K})}$ corresponding to ``starting at state $a$''.
We show in this paper the simple but important fact that for each $a\in\N$ there is also a unique element $\boldsymbol{K}(a,\cdot)\in\vdist(\boldsymbol{K})$ corresponding to the induced distribution ``immediately after'' starting at $a$; it can also be seen as the ``$a$th row'' of $\boldsymbol{K}$ in an appropriate sense.

Now we can (informally) state the zero-one law (which appears as Theorem~\ref{thm:general-ZOL} in the main body).
For the sake of simplicity in this introduction, we focus our attention on the case of a VMC $\boldsymbol{X}$ which is assumed to be \textit{irreducible} in sense that appropriately generalizes the classical sense of irreducibility; we will precisely state this later, but the reader should note that most examples of interest are irreducible.

\begin{customthm}{A}\label{thm:intro-ZOL}
An irreducible VMC $\boldsymbol{X}$ represented by $(\boldsymbol{\nu},\boldsymbol{K})$ has a trivial tail $\sigma$-algebra $\tail(\boldsymbol{X})$ if and only if the conditions
\begin{enumerate}
	\item[(i)] $\boldsymbol{\nu}\in \ex{\vdist(\boldsymbol{K})}$, and
	\item[(ii)] $\boldsymbol{K}(a,\cdot)\in \ex{\vdist(\boldsymbol{K})}$ for all $a\ge 1$
\end{enumerate}
are both satisfied.
\end{customthm}

The key idea of the proof of Theorem~\ref{thm:intro-ZOL} is to recognize the importance of a class non-negative time-inhomogeneous MCs $\boldsymbol{S} = \{S_N\}_{N\in\N}$ with the property that
\begin{equation}\label{eqn:SMC-stay-1}
\frac{\P(S_{N+1} \in \cdot \cap \{0,1,\ldots, N\} \ | \ S_N)}{\P(S_{N+1} \in \{0,1,\ldots, N\} \ | \ S_N)} = \delta_{S_{N}}(\cdot)
\end{equation}
and
\begin{equation}\label{eqn:SMC-jump-1}
\frac{\P(S_{N+1} \in \cdot \setminus \{0,1,\ldots, N\} \ | \ S_N)}{\P(S_{N+1} \notin \{0,1,\ldots, N\} \ | \ S_N)} = \delta_{N+1}(\cdot)
\end{equation}
hold almost surely for all $N\in\N$; in words, $\boldsymbol{S}$ moves only by holding in the current state or by jumping to a value equal to its current index.
The importance of these objects to the study VMCs was, to some degree, known (see \cite[Proposition~3.13]{EvansJaffeVMC}), but in this paper we develop their theory much more comprehensively.
We call these the \textit{staircase MCs (SMCs)} since the graph of any  sample path of such a process is a depiction of an irregular staircase.

By focusing on the role of SMCs, the proof of Theorem~\ref{thm:intro-ZOL} essentially consists of two main ideas:
The first is to notice that any VMC can be naturally decomposed into a infinite collection of independent SMCs, and the second is to notice that $\vdist(\boldsymbol{K})$ is naturally identified with the space of SMCs whose backwards transition probabilities have been fixed.
This second idea is closely related to many well-studied aspects of Martin boundary theory, so the observation brings many powerful tools become to our disposal.

Let us describe the first idea more carefully.
The core of the idea is contained in the following (informal) theorem statement (which appears as Proposition~\ref{prop:decomp-SMC},  Prosposition~\ref{prop:decomp-independence}, and Proposition~\ref{prop:decomp-sigma-alg} in the main body).

\begin{customthm}{B}\label{thm:intro-decomp}
Let $\boldsymbol{X}$ be a VMC.
Let $\boldsymbol{S}^0$ be the sequence of initial positions of $\boldsymbol{X}$ at all levels, and for $a,k\in\N$ let $\boldsymbol{S}^{a,k}$ be the sequence of positions visited immediately after the $k$th visit of $\boldsymbol{X}$ to $a$ at all levels.
Then, $\{\boldsymbol{S}^0\}\cup\{\boldsymbol{S}^{a,k}: a,k\in\N\}$ are independent SMCs, and we have $\sigma(\boldsymbol{X}) = \sigma(\boldsymbol{S}^0)\vee\sigma(\boldsymbol{S}^{a,k}: a,k\in\N)$.
\end{customthm}

The collection $\{\boldsymbol{S}^0\}\cup\{\boldsymbol{S}^{a,k}: a,k\in\N\}$  is called the \textit{staircase decomposition} of $\boldsymbol{X}$.
The rigorous statement of Theorem~\ref{thm:intro-decomp} takes more effort, primarily in terms of notation; we will precisely state it later.
The importance of Theorem~\ref{thm:intro-decomp} is that it guarantees that any probabilistic question about $\boldsymbol{X}$ can be equivalently stated in terms of its staircase decomposition, which has the substantial advantage of its elements being independent.
We also show (Lemma~\ref{lem:SMC-dist}) that if a VMC $\boldsymbol{X}$ is represented by $(\boldsymbol{\nu},\boldsymbol{K})$, then the sequence of marginals of $\boldsymbol{S}^{0}$ is exactly the VID $\boldsymbol{\nu}$, and the sequence of marginals of $\boldsymbol{S}^{a,k}$ is exactly the ``row'' $\boldsymbol{K}(a,\cdot)$ of the VTM $\boldsymbol{K}$.

Now let us return to the main result of Theorem~\ref{thm:intro-ZOL}.
The upshot of this result is that we can deduce the triviality or non-triviality of $\tail(\boldsymbol{X})$ for any VMC $\boldsymbol{X}$ by understanding the extreme point structure of the spaces $\{\vdist(\boldsymbol{K})\}_{\boldsymbol{K}}$ where $\boldsymbol{K}$ ranges over all VTMs.
From the perspective of probability theory, the infinite-dimensional compact convex set $\vdist(\boldsymbol{K})$ is somewhat intimidating: each $\vdist(\boldsymbol{K})$ is the solution set to certain infinite system of balayage inverse problems, generalizing the one-step balayage inverse problems originally studied in \cite{Balayage1,Balayage2}.
Fortunately, from the perspective of convex analysis, this problem has been carefully studied in the elegant but apparently underappreciated paper \cite{Sternfeld} of Sternfeld.
Indeed, Sternfeld shows that the spaces $\{\vdist(\boldsymbol{K})\}_{\boldsymbol{K}}$ are simplices, that essentially any simplex is one of the spaces $\{\vdist(\boldsymbol{K})\}_{\boldsymbol{K}}$, and that many properties of $\vdist(\boldsymbol{K})$ and its extreme point structure can be equivalently cast in terms of easily-checkable properties of $\boldsymbol{K}$.
Thus, combining our zero-one law with Sternfeld's theorems allows one to determine the triviality of non-triviality of the tail $\sigma$-algebra for many VMCs of interest.
Additionally, our work gives a novel probabilistic interpretation to some of Sternfeld's results.

This paper can also be seen as the starting point for understanding the Martin boundary theory of VMCs, which we pursue in forthcoming work.
Indeed, it is easy to show a VMC $\boldsymbol{X} = \{X_N\}_{N\in\N}$ can be viewed as a path-valued MC, where the conditional law of $X_{N+1}$ on $X_{N}$ results from making certain random independent insertions of copies of the character $N+1$ into the sample path of $X_N$.
From this perspective, the zero-one law Theorem~\ref{thm:intro-ZOL} exactly characterizes when a VMC is such that its almost sure limit, in the topology of the Martin compactification, has a trivial law.

The remainder of this paper is structured as follows.
In Section~\ref{sec:background} we review some background material on VMCs and convex analysis that are needed for the rest of the paper.
In Section~\ref{sec:SMC} we develop the general theory of SMCs including the connection with the work of Sternfeld.
In Section~\ref{sec:decomp} we prove the staircase decomposition theorem, and in Section~\ref{sec:tail} we prove the main zero-one law.
Examples are given at the end of each section.

\section{Background}\label{sec:background}

In order to make this paper self-contained, we review in this section some requisite background material.
To begin, set $\N := \{0,1,2,\ldots\}$, and write $\llbracket a,b\rrbracket = \{a,a+1,\ldots, b-1,b\}$ for $a,b\in \N$, which is taken to be empty if $a > b$.
Write $\mathcal{P}(S)$ for the set of Borel probability measures on a Polish space $S$.

\subsection{Virtual Markov Chains}

We begin with the basics of virtual Markov chains (VMCs), and we direct the reader to \cite{EvansJaffeVMC} for proofs of these statements as well as further information on VMCs.
Fix $N\in\N$.
Write
\begin{equation*}
\chain :=\left\{X\in \N^{\N}: \begin{matrix}
\text{ if } X(i) = 0 \text{ for some } i\in\N, \\
\text{then } X(j) = 0 \text{ for } j\ge i
\end{matrix}\right\}
\end{equation*}
and
\begin{equation*}
\chain_N :=\left\{X\in \llbracket 0,N\rrbracket^{\N}: \begin{matrix}
\text{ if } X(i) = 0 \text{ for some } i\in\N, \\
\text{then } X(j) = 0 \text{ for } j\ge 1
\end{matrix}\right\}.
\end{equation*}
For $X\in \chain$ and $A\subseteq \N$, write $I_{X,A}(0) := \inf\{i\in\N: X(i)\in A \}$ and $I_{X,A}(k+1) := \inf\{i\in\N: i> I_{X,A}(k), X(i)\in A \}$ recursively for $k\in\N$.
Using this define the map $P_N:\chain\to\chain_N$ via
\begin{equation*}
(P_N(x))(i) = \begin{cases}
X(I_{X,\llbracket 0,N\rrbracket}(i)), &\text{ if } I_{X,\llbracket 0,N\rrbracket}(i) < \infty, \\
0, &\text{ if } I_{X,\llbracket 0,N\rrbracket}(i) = \infty,
\end{cases}
\end{equation*}
for all $X\in \chain$.
Intuitively speaking, $P_N$ is the map which removes from its input all excursions outside of $\llbracket 0,N\rrbracket$, and it pads the resulting path with 0s in the case that there is a final infinite excursion.
Finally, we define \textit{virtual path space} as the set
\begin{equation*}
\vchain := \left\{\{X_N\}_{N\in\N} \in \prod_{N\in\N}\chain_N: P_{N}(X_{N+1}) = X_N\text{ for all } N\in\N \right\},
\end{equation*}
which is known \cite[Lemma~2.6]{EvansJaffeVMC} to be a compact Polish space.

Now fix some probability space $(\Omega,\F,\P)$.
A $\vchain$-valued random sequence $\boldsymbol{X} =\{X_N\}_{N\in\N}$ is called a \textit{virtual Markov chain (VMC)} if each $X_N$ is a Markov chain (MC) in the state space $\llbracket 0,N\rrbracket$.
We call $X_N$ the \textit{$N$th marginal MC} of $\boldsymbol{X}$, and it follows from this definition that 0 is an absorbing state for each marginal MC.
In words, a VMC is a sequence of MCs on growing state spaces, coupled together so that viewing the $(N+1)$th marginal MC only when it visits states in $\llbracket 0,N\rrbracket$ results in the $N$th marginal MC; in the case that $X_N$ has a last visit to $\llbracket 0,N\rrbracket$, we pad the resulting path with 0s.
It is known that virtual permutations \cite[Example~2.13]{EvansJaffeVMC} and classical MCs in the state space $\N$ with 0 as an absorbing state \cite[Proposition~2.16]{EvansJaffeVMC} both give rise to VMCs in a natural way.
See Figure~\ref{fig:VMC-sample-path} for an illustration of the initial segments of one realization of the sample paths of a VMC.

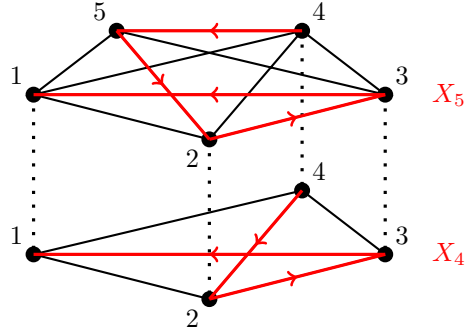
\begin{figure}
	\begin{tikzpicture}[scale=0.85,very thick,decoration={
		markings,
		mark=at position 0.5 with {\arrow{>}}}
	]
	
	\def\XA{0}
	\def\YA{1}
	
	\def\XB{2.75}
	\def\YB{0.3}
	
	\def\XC{5.5}
	\def\YC{1}
	
	\def\XD{4.2}
	\def\YD{2}
	
	\def\XE{1.3}
	\def\YE{2}
	
	\def\offset{2.5}
	
	\coordinate (v1) at (\XA,\YA);
	\coordinate (v2) at (\XB,\YB);
	\coordinate (v3) at (\XC,\YC);
	\coordinate (v4) at (\XD,\YD);
	\coordinate (v5) at (\XE,\YE);
	
	\coordinate (w1) at (\XA,\YA-\offset);
	\coordinate (w2) at (\XB,\YB-\offset);
	\coordinate (w3) at (\XC,\YC-\offset);
	\coordinate (w4) at (\XD,\YD-\offset);
	
	\draw[thick,-] (v1) -- (v2) -- (v3) -- (v4) -- (v5) -- (v1);
	\draw[thick,-] (v2) -- (v4) -- (v1) --  (v3) -- (v5) -- (v2);
	\draw[fill] (v1) circle [radius=0.1] node[above left] {$1$};
	\draw[fill] (v2) circle [radius=0.1] node[below left] {$2$};
	\draw[fill] (v3) circle [radius=0.1] node[above right] {$3$};
	\draw[fill] (v4) circle [radius=0.1] node[above right] {$4$};
	\draw[fill] (v5) circle [radius=0.1] node[above left] {$5$};
	
	\draw[thick,-] (w1) -- (w2) -- (w3) -- (w4) -- (w1);
	\draw[thick,-] (w1) -- (w3);
	\draw[thick,-] (w2) -- (w4);
	\draw[fill] (w1) circle [radius=0.1] node[above left] {$1$};
	\draw[fill] (w2) circle [radius=0.1] node[below left] {$2$};
	\draw[fill] (w3) circle [radius=0.1] node[above right] {$3$};
	\draw[fill] (w4) circle [radius=0.1] node[above right] {$4$};
	
	\draw[loosely dotted,-] (v1) -- (w1);
	\draw[loosely dotted,-] (v2) -- (w2);
	\draw[loosely dotted,-] (v3) -- (w3);
	\draw[loosely dotted,-] (v4) -- (w4);
	
	\draw[postaction={decorate},color=red] (v4)--(v5);
	\draw[postaction={decorate},color=red] (v5)--(v2);
	\draw[postaction={decorate},color=red] (v2)--(v3);
	\draw[postaction={decorate},color=red] (v3)--(v1);
	
	\draw[postaction={decorate},color=red] (w4)--(w2);
	\draw[postaction={decorate},color=red] (w2)--(w3);
	\draw[postaction={decorate},color=red] (w3)--(w1);
	
	\node[red] at (\XC+1,\YC) {$X_5$};
	\node[red] at (\XC+1,\YC-\offset) {$X_4$};
	
	\end{tikzpicture}
	\caption{The initial segments of one realization of the sample paths of a VMC, viewed only at levels 4 and 5.
	The cemetery state of 0 is never visited, so it is not depicted in the state space of each marginal MC.}
	\label{fig:VMC-sample-path}
\end{figure}

Let us elaborate on the way that classical MCs in the state space $\N$ with 0 as a cemetery state give rise to VMCs.
Indeed, if $X$ is such a process, then $\{P_N(X)\}_{N\in\N}$ is a VMC; a VMC arising in this way is called a \textit{classical} VMC, since it is nothing  more than a classical MC ``in disguise''.
It it known (see \cite[Lemma~2.9]{EvansJaffeVMC} and \cite[Proposition~2.16]{EvansJaffeVMC}) that a VMC $\boldsymbol{X} = \{X_N\}_{N\in\N}$ is classical if and only if almost surely $X:=\lim_{N\to\infty}X_N$ exists in $\chain$ and satisfies $\boldsymbol{X}=\{P_N(X)\}_{N\in\N}$.

The study of VMCs can be made concrete by characterizing laws of VMCs through some canonical pieces of data.
To state this, recall that a \textit{virtual initial distribution (VID)} is a sequence $\{\nu_N\}_{N\in\N}$ where each $\nu_N$ is a probability measure on $\llbracket 0,N\rrbracket$, such that the whole collection satisfies $\nu_{N}(a) \ge \nu_{N+1}(a)$ for all $N\in\N$ and $a\in\llbracket 0,N\rrbracket$.
Similarly, a \textit{virtual transition matrix (VTM)} is a sequence $\{K_N\}_{N\in\N}$ where the matrix $K_N\in [0,1]^{(N+1)\times (N+1)}$ is a stochastic matrix satisfying $K_N(0,0) = 1$ and $P_{N}(K_{N+1}) = K_N$ for all $N\in\N$, where we have defined the (non-linear) map
\begin{equation}\label{eqn:K-proj-formula}
P_N\begin{pmatrix}
1 & 0 & 0 \\
w & K & u \\
q & v^{{\text{T}}} & p
\end{pmatrix} := \begin{cases}
\begin{pmatrix}
1 & 0 \\
w + \frac{q}{1-p}u& K + \frac{1}{1-p}uv^{\text{T}} \\
\end{pmatrix}, &\text{ if } p < 1, \\
& \\
\begin{pmatrix}
1 & 0 \\
w + u& K \\
\end{pmatrix}, &\text{ if } p = 1,
\end{cases}
\end{equation}
for $K\in [0,1]^{N\times N}$, $u,v,w\in [0,1]^{N}$, and $p,q\in [0,1]$.
For a VTM $\boldsymbol{K}= \{K_N\}_{N\in\N}$ we also define
\begin{equation}\label{eqn:VTM-balayage}
\pi_{N}^{\boldsymbol{K}}(a) := \begin{cases}
\frac{K_{N+1}(N+1,a)}{1-K_{N+1}(N+1,N+1)}, &\text{ if } K_{N+1}(N+1,N+1) < 1, \\
1, &\text{ else if } a = 0, \\
0, & \text{ else if } a \neq 0.
\end{cases}
\end{equation}
for $a\in \llbracket 0,N\rrbracket$, and it follows that the definition of VTM is equivalent to having  $K_N(a,b) = K_{N+1}(a,b) + K_{N+1}(a,N+1)\pi_{N}^{\boldsymbol{K}}(b)$ for all $a,b\in \llbracket 0,N\rrbracket$.
Furthermore, we say that a VID $\boldsymbol{\nu}= \{\nu_N\}_{N\in\N}$ is \textit{compatible with} $\boldsymbol{K}$, or that the pair $(\boldsymbol{\nu},\boldsymbol{K})$ is \textit{compatible}, if we have $\nu_N(a) = \nu_{N+1}(a) + \nu_{N+1}(N+1)\pi_{N}^{\boldsymbol{K}}(a)$ for all $a\in \llbracket 0,N\rrbracket$.

It is easy to show that if $\boldsymbol{X} = \{X_N\}_{N\in\N}$ is a VMC, then $\boldsymbol{\nu} =\{\nu_N\}_{N\in\N}$ defined via $\nu_N(a) := \P(X_N(0) = a)$ is a VID, that $\boldsymbol{K} = \{K_N\}_{N\in\N}$ defined via $K_N(a,b) := \P(X_N(1) = b \ | \ X_N(0) = a)$ is a VTM, and that the pair $(\boldsymbol{\nu},\boldsymbol{K})$ is compatible.
In this case we say that (the law of) $\boldsymbol{X}$ \textit{is represented by} $(\boldsymbol{\nu},\boldsymbol{K})$.
Also true, but immediately obvious, (see \cite[Theorem~3.22]{EvansJaffeVMC}) is that (the law of) any VMC is represented by some compatible pair $(\boldsymbol{\nu},\boldsymbol{K})$, and that such a representation is unique on the support of $\boldsymbol{X}$.

\subsection{Convex Analysis}

Next we describe some elements of convex analysis, particularly Choquet theory, and we direct the reader to the standard references \cite{Alfsen} and \cite{Phelps} for further information.

To begin, let $E$ be a real Hausdorff locally convex metrizable topological vector space, and let $K$ be a compact convex subset of $E$.
A point $x\in K$ is called \textit{extreme for $K$} if $x',x''\in K$ and $\alpha\in (0,1)$ satisfying $x =  (1-\alpha)x'+\alpha x''$ implies $x=x'=x''$.
We write $\ex{K}$ for the set of points that are extreme for $K$.
We say, for a Borel probability measure $\mu$ on $K$ and a point $x\in K$, that \textit{$\mu$ represents $x$} if we have $\lambda(x) = \int_{K}\lambda\, d\mu$ for all continuous affine functions $\lambda:E\to \R$.

The first fundamental theorem of Choquet theory \cite[Chapter~3]{Phelps} is that each point $x\in K$ is represented by some Borel probability measure $\mu$ on $K$ satisfying $\mu(\ex{K}) =1$, called a \textit{representing probability measure}; since $\ex{K}$ is $G_{\delta}$ hence Borel (see \cite[Proposition~1.3]{Phelps}), it is well-defined to discuss the probability $\mu(\ex{K})$.
In analogy with the finite-dimensional case, one then defines a \textit{simplex} to be a compact convex subset $K$ of $E$ such that each point admits a unique representing probability measure.
The second fundamental theorem of Choquet theory \cite[Chapter~10]{Phelps} is that $K$ is a simplex if and only if a certain lattice-theoretic condition on $A(K)$, the space of continuous affine functions from $K$ to $\R$, is satisfied.
In fact, there are many equivalent formulations of simpliciality, many having strong connections to lattice theory.

In addition to these classical ideas, we will also need some finer results of Choquet theory which appear to be lesser-known to probabilists; more detail on these points and much more can be found in the extremely authoritative monograph \cite{IntRepTheory}.

Our primary interest is in a categorical notion.
That is, we consider the category where the objects are the simplices (in real Hausdorff locally convex metrizable topological vector spaces), and where the morphisms are the continuous affine functions.
By a \textit{projective system} in this category, we mean a partially ordered set $(I,\le)$, a family of simplices $\{K_i\}_{i\in\N}$, and a family of continuous affine maps $\{p_{ij}\}_{i,j\in I, i\le j}$ where $p_{ij}:K_j\to K_i$ is surjective, $p_{ii}$ is the identity on $K_i$, and $p_{ik} = p_{ij}\circ p_{jk}$ holds on $K_k$.
By a \textit{projective limit} of a projective system we mean a simplex $K$ and a family of continuous affine maps $\{q_i\}_{i\in I}$ where $q_i:K\to K_i$ is surjective, and $q_i = p_{ij}\circ q_j$ holds on $K$.
Importantly, it is known (see \cite[Theorem~13]{DaviesVincentSmith}, \cite[Theorem~2]{Jellett}, or \cite[Corollary~12.35]{IntRepTheory}) that projective limits in this category always exist.

Additionally, we will need to understand the extreme point structure of simplices which are projective limits of finite-dimensional simplices; this is exactly the content of the elegant work of Sternfeld \cite{Sternfeld}.
In that paper it is observed that any projective limit of finite-dimensional simplices can be encoded by a lower-triangular matrix called a \textit{representing matrix}, and that many properties of the projective limit can be described in terms of this representing matrix.
We detail these ideas more carefully in Section~\ref{sec:SMC} when they are needed.

\section{Staircase Markov Chains}\label{sec:SMC}

In this section we develop some aspects of a class of inhomogeneous Markov chains that we call \textit{staircase Markov chains (SMCs)}.
Such objects, although without being directly given this name, were introduced and studied in \cite{EvansJaffeVMC} where they were shown to play an important role in the theory of virtual Markov chains (VMCs).
Presently we dedicate a more comprehensive study to SMCs so that their finer probabilistic structure can be used to more fully understand the structure of VMCs.

To begin, define the compact Polish space
\begin{equation*}
\mathbfcal{S} := \left\{\{S_N\}_{N\in\N}\in \prod_{N\in\N}\llbracket 0, N\rrbracket: S_{N+1} \in \{S_N,N+1\} \text{ for all } N\in\N \right\},
\end{equation*}
called the space of \textit{staircases}, which are so named because the graph of any such sequence looks like an irregular staircase.
See Figure~\ref{fig:staircase-process}  for a graph of a sample path of a staircase process.

\begin{figure}
\begin{center}
	\begin{tikzpicture}[scale = 0.5]
	\draw[-,  thick] (0, -1) to (0,6);
	\draw[-,  thick] (-1,0) to (6,0);
	\draw[fill] (0,0) circle [color=red,radius=.2] node[above] {};
	\draw[fill] (1,1) circle [color=red,radius=.2] node[above] {};
	\draw[fill] (2,2) circle [color=red,radius=.2] node[above] {};
	\draw[fill] (3,2) circle [color=red,radius=.2] node[above] {};
	\draw[fill] (4,4) circle [color=red,radius=.2] node[above] {};
	\draw[fill] (5,4) circle [color=red,radius=.2] node[above] {};
	\end{tikzpicture} 
\end{center}
\caption{The graph of a sample path of a staircase process.}
\label{fig:staircase-process}
\end{figure}
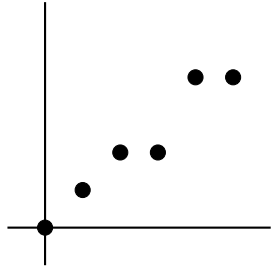

A probability measure $\mu\in \mathcal{P}(\mathbfcal{S})$ is called a \textit{staircase process} and a staircase process which is an inhomogeneous Markov chain (MC) is called a \textit{staircase Markov chain (SMC)}.
Equivalently, a SMC is the law $\mu$ of an $\N$-valued inhomogeneous MC $\{S_N\}_{N\in\N}$ such that
\begin{equation}\label{eqn:SMC-stay}
\frac{\mu(S_{N+1} \in \cdot \cap \llbracket 0, N\rrbracket \ | \ S_N)}{\mu(S_{N+1} \in \llbracket 0, N\rrbracket \ | \ S_N)} = \delta_{S_{N}}(\cdot)
\end{equation}
and
\begin{equation}\label{eqn:SMC-jump}
\frac{\mu(S_{N+1} \in \cdot \setminus \llbracket 0, N\rrbracket \ | \ S_N)}{\mu(S_{N+1} \notin \llbracket 0, N\rrbracket \ | \ S_N)} = \delta_{N+1}(\cdot)
\end{equation}
hold $\mu$-almost surely for all $N\in\N$.
We write $\mathcal{P}_{\mathrm{MC}}(\mathbfcal{S})$ for the space of all SMCs, which is compact.
By a slight abuse of notation, an $\mathbfcal{S}$-valued random variable whose law is an SMC is also called an SMC.

While the space $\mathcal{P}_{\mathrm{MC}}(\mathbfcal{S})$ appears rather complicated, it was observed in \cite{EvansJaffeVMC} that it can be reparameterized in a more convenient way.
To do this, for $\mu_N\in \mathcal{P}(\llbracket 0,N\rrbracket)$ and $\mu_{N+1}\in \mathcal{P}(\llbracket 0,N+1\rrbracket)$, write $\mu_{N+1}\le \mu_N$ to mean that we have $\mu_{N+1}(a)\le \mu_N(a)$ for all $a\in \llbracket 0,N\rrbracket$.
Then define
\begin{equation*}
\vdist := \left\{\{\nu_N\}_{N\in\N}\in \prod_{N\in\N}\mathcal{P}(\llbracket 0,N\rrbracket): \nu_{N+1}\le \nu_{N} \text{ for all } N\in\N \right\},
\end{equation*}
which is clearly compact and convex.
Now we recall the following result, which is an  equivalent formulation of \cite[Proposition~3.13]{EvansJaffeVMC}:

\begin{proposition}\label{prop:SMC-to-marginals}
	The map $\Phi:\mathcal{P}_{\mathrm{MC}}(\mathbfcal{S})\to \vdist$ sending each SMC to its sequence of marginal distributions is a homeomorphism.
	More specifically, for any $\{\nu_N\}_{N\in\N}\in\vdist$, if we define $\mu\in\mathcal{P}_{\mathrm{MC}}(\mathbfcal{S})$ via $\mu\circ S_0^{-1} = \nu_0$ and
	\begin{equation}\label{eqn:marginal-to-SMC}
	\mu(S_{N+1} \in \cdot \ | \ S_N) =\frac{\nu_{N+1}(S_N)}{\nu_{N}(S_N)}\delta_{S_N}(\cdot) + \left(1-\frac{\nu_{N+1}(S_N)}{\nu_{N}(S_N)}\right)\delta_{N+1}(\cdot)
	\end{equation}
	for all $N\in\N$, then $\Phi(\mu) = \{\nu_N\}_{N\in\N}$.
\end{proposition}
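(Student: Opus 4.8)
The plan is to verify that the explicit formula \eqref{eqn:marginal-to-SMC} is a genuine two-sided inverse of $\Phi$, and then to upgrade the resulting bijection to a homeomorphism by a compactness argument. First I would record that $\Phi$ is well-defined and continuous: if $\mu\in\mathcal{P}_{\mathrm{MC}}(\mathbfcal{S})$ has marginals $\nu_N := \mu\circ S_N^{-1}$, then for $a\in\llbracket 0,N\rrbracket$ the staircase constraint forces $\{S_{N+1}=a\}\subseteq\{S_N=a\}$ — the only member of $\{S_N,N+1\}$ that can equal $a$ is $S_N$, since $a\neq N+1$ — whence $\nu_{N+1}(a)\le\nu_N(a)$ and $\{\nu_N\}_{N\in\N}\in\vdist$. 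Continuity is then automatic, since each coordinate map $\mu\mapsto\mu\circ S_N^{-1}$ is the pushforward along the continuous projection $S_N\colon\mathbfcal{S}\to\llbracket 0,N\rrbracket$ and $\vdist$ carries the product topology.

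Next I would unpack the structure of a general SMC. The Markov property together with \eqref{eqn:SMC-stay} and \eqref{eqn:SMC-jump} forces the conditional law of $S_{N+1}$ given $S_N$ to be of the form $p_N(S_N)\delta_{S_N}+(1-p_N(S_N))\delta_{N+1}$ for a measurable ``holding probability'' $p_N\colon\llbracket 0,N\rrbracket\to[0,1]$, so the law of the chain is determined by $\nu_0$ (which is necessarily $\delta_0$, as $\llbracket 0,0\rrbracket=\{0\}$) together with the sequence $\{p_N\}_{N\in\N}$. Integrating this transition rule against $\nu_N$ gives the telescoping identity $\nu_{N+1}(a)=\nu_N(a)\,p_N(a)$ for $a\in\llbracket 0,N\rrbracket$, which recovers $p_N(a)=\nu_{N+1}(a)/\nu_N(a)$ whenever $\nu_N(a)>0$ and shows the value of $p_N(a)$ is immaterial when $\nu_N(a)=0$. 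Hence $\Phi$ is injective, and \eqref{eqn:marginal-to-SMC} is its inverse (with the $0/0$ ambiguity on null states resolved by any fixed convention, say by declaring the ratio to be $1$ there).

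For surjectivity I would run \eqref{eqn:marginal-to-SMC} forward: given $\{\nu_N\}_{N\in\N}\in\vdist$, the displayed kernel is a bona fide Markov kernel — its weights lie in $[0,1]$ because $\nu_{N+1}\le\nu_N$, they sum to $1$, and it is supported on $\{S_N,N+1\}\subseteq\llbracket 0,N+1\rrbracket$ — so it defines a Markov law $\mu$ on $\mathbfcal{S}$ satisfying \eqref{eqn:SMC-stay} and \eqref{eqn:SMC-jump} by inspection, i.e.\ $\mu\in\mathcal{P}_{\mathrm{MC}}(\mathbfcal{S})$. An induction on $N$ then shows $\mu\circ S_N^{-1}=\nu_N$: the base case is the definition, and in the inductive step one computes $\mu(S_{N+1}=a)=\nu_N(a)\,p_N(a)=\nu_{N+1}(a)$ for $a\le N$, while $\mu(S_{N+1}=N+1)=\sum_{b=0}^{N}(\nu_N(b)-\nu_{N+1}(b))=1-\sum_{b=0}^{N}\nu_{N+1}(b)=\nu_{N+1}(N+1)$ since $\nu_{N+1}$ is a probability measure on $\llbracket 0,N+1\rrbracket$. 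Thus $\Phi$ is a continuous bijection; since $\mathcal{P}_{\mathrm{MC}}(\mathbfcal{S})$ is compact and $\vdist$ is metrizable (hence Hausdorff), $\Phi$ is a homeomorphism, and \eqref{eqn:marginal-to-SMC} is the claimed description of $\Phi^{-1}$.

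The only real friction is bookkeeping around the states of zero marginal mass, where $\nu_{N+1}(S_N)/\nu_N(S_N)$ is formally $0/0$: one must fix a convention and verify it is consistent in both directions, so that \eqref{eqn:marginal-to-SMC} genuinely defines the inverse and not merely an inverse modulo null modifications. Everything else is routine once one observes that the staircase-plus-Markov structure collapses an SMC to the single sequence of holding probabilities $\{p_N\}_{N\in\N}$.
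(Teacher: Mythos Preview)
Your argument is correct. Note, however, that the paper does not actually prove this proposition: it is stated as a recollection of \cite[Proposition~3.13]{EvansJaffeVMC} and no proof is given in the present paper. Your write-up therefore supplies the details that the paper omits, and the approach you take---extracting the holding probabilities $p_N(a)=\nu_{N+1}(a)/\nu_N(a)$ from the staircase constraint $\{S_{N+1}=a\}\subseteq\{S_N=a\}$, verifying by induction that the kernel \eqref{eqn:marginal-to-SMC} reproduces the prescribed marginals, and then invoking compactness to upgrade the continuous bijection to a homeomorphism---is the natural one and matches what the cited reference does.
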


Since $\vdist$ is compact and convex, it is natural to look for an integral representation theorem expressing each point via a representing measure supported on the extreme points.
In fact, the correspondence $\Phi$ provides a somewhat canonical way to do this.
Note for all $\boldsymbol{S}\in \mathbfcal{S}$ that $\Phi(\delta_{\boldsymbol{S}})$ lies in $\vdist$ and is an extreme point of $\vdist$.
Thus, defining the map $\phi:\mathbfcal{S}\to\vdist$ via $\phi(\boldsymbol{S}) := \Phi(\delta_{\boldsymbol{S}})$ for $\boldsymbol{S}\in\mathbfcal{S}$, we have the following:

\begin{proposition}\label{prop:SMC-int-rep}
	Each $\boldsymbol{\nu}\in \vdist$ is represented by $(\Phi^{-1}(\boldsymbol{\nu}))\circ \phi^{-1} \in \mathcal{P}(\ex{\vdist})$.
\end{proposition}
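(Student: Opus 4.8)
The claim is that for each $\boldsymbol{\nu}\in\vdist$, the pushforward measure $(\Phi^{-1}(\boldsymbol{\nu}))\circ\phi^{-1}$ — which by construction is a Borel probability measure on $\vdist$ concentrated on $\phi(\mathbfcal{S})\subseteq\ex{\vdist}$ — represents $\boldsymbol{\nu}$ in the Choquet sense, i.e. $\lambda(\boldsymbol{\nu}) = \int_{\vdist}\lambda\,d\big((\Phi^{-1}(\boldsymbol{\nu}))\circ\phi^{-1}\big)$ for every continuous affine $\lambda$ on the ambient space. The strategy is to unwind the pushforward: writing $\mu := \Phi^{-1}(\boldsymbol{\nu})\in\mathcal{P}_{\mathrm{MC}}(\mathbfcal{S})$ for the SMC with marginal sequence $\boldsymbol{\nu}$, we have
\begin{equation*}
\int_{\vdist}\lambda\,d\big(\mu\circ\phi^{-1}\big) = \int_{\mathbfcal{S}}\lambda\big(\phi(\boldsymbol{S})\big)\,d\mu(\boldsymbol{S}) = \int_{\mathbfcal{S}}\lambda\big(\Phi(\delta_{\boldsymbol{S}})\big)\,d\mu(\boldsymbol{S}),
\end{equation*}
so the task reduces to showing this last integral equals $\lambda(\boldsymbol{\nu}) = \lambda(\Phi(\mu))$.

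First I would identify the ambient locally convex space explicitly. Each $\nu_N$ lives in $\mathcal{P}(\llbracket 0,N\rrbracket)\subseteq\R^{N+1}$, so $\vdist$ sits inside $E := \prod_{N\in\N}\R^{N+1}$ with the product topology, a metrizable locally convex space, and the affine structure on $\vdist$ is the coordinatewise one inherited from $E$. Continuous affine functions on $E$ are (up to an additive constant) continuous linear functionals, and since the topology is the product topology, every such functional depends on only finitely many coordinates; hence it suffices to verify the representing identity for the coordinate evaluation functionals $\lambda_{N,a}(\{\rho_M\}_M) := \rho_N(a)$ for $N\in\N$, $a\in\llbracket 0,N\rrbracket$, together with constants. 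For these the identity becomes the concrete statement
\begin{equation*}
\nu_N(a) = \int_{\mathbfcal{S}}\big(\phi(\boldsymbol{S})\big)_N(a)\,d\mu(\boldsymbol{S}) = \int_{\mathbfcal{S}}\mathbf{1}\{S_N = a\}\,d\mu(\boldsymbol{S}) = \mu(S_N = a),
\end{equation*}
where I have used that $\phi(\boldsymbol{S}) = \Phi(\delta_{\boldsymbol{S}})$ is the marginal sequence of the deterministic staircase $\boldsymbol{S}$, whose $N$th marginal is exactly $\delta_{S_N}$. But $\mu(S_N = a) = \nu_N(a)$ is precisely the defining property of $\mu = \Phi^{-1}(\boldsymbol{\nu})$ from Proposition~\ref{prop:SMC-to-marginals}. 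So the core of the proof is this one-line Fubini-type computation once the reduction to coordinate functionals is in place.

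The two points requiring a little care — and the main (mild) obstacle — are: (a) justifying that it suffices to check coordinate functionals, which needs the observation that $A(E)$ is generated by coordinate evaluations and constants under pointwise limits, and that Choquet representation can be tested on such a generating family (here one uses linearity of $\mu\mapsto\int\lambda\,d\mu$ in $\lambda$ and a monotone/dominated convergence argument, all integrands being bounded by $1$); and (b) confirming measurability of $\phi$ so that the pushforward $\mu\circ\phi^{-1}$ is well-defined — this follows since $\phi = \Phi\circ(\delta_{\cdot})$ with $\boldsymbol{S}\mapsto\delta_{\boldsymbol{S}}$ continuous (weak topology) and $\Phi$ a homeomorphism by Proposition~\ref{prop:SMC-to-marginals}, so $\phi$ is continuous, hence Borel. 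Finally, $\mu\circ\phi^{-1}$ is supported on $\ex{\vdist}$ because $\phi(\mathbfcal{S})\subseteq\ex{\vdist}$ by the remark preceding the proposition, so it is a genuine representing probability measure in the sense of Choquet theory. I would assemble these pieces in the order: (1) set up $E$ and reduce to coordinate functionals; (2) compute the pushforward integral of a coordinate functional and match it with the marginal identity defining $\Phi^{-1}(\boldsymbol{\nu})$; (3) note the support statement. No step is genuinely hard; the content is entirely in recognizing that the formula defining the representing measure is a direct transcription of the homeomorphism $\Phi$.
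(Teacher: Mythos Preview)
Your proposal is correct and follows essentially the same approach as the paper: both reduce the representing identity to the coordinate evaluation functionals $e_{a,N}(\boldsymbol{\nu})=\nu_N(a)$ and then verify it by the one-line computation $\int_{\mathbfcal{S}}\mathbf{1}\{S_N=a\}\,d\mu(\boldsymbol{S})=\mu(S_N=a)=\nu_N(a)$. The only difference is in how the reduction is justified: the paper introduces truncation maps $T_M:\vdist\to\vdist$ and appeals to dominated convergence to pass from arbitrary continuous affine $\lambda$ to ones depending on finitely many coordinates, whereas you invoke directly the fact that the continuous dual of $E=\prod_{N}\R^{N+1}$ with the product topology is the algebraic direct sum, so every continuous affine functional on $E$ already depends on finitely many coordinates. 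Your route is actually slightly cleaner for the definition of ``represents'' stated in the paper (which tests only against continuous affine $\lambda:E\to\R$), and it makes your hedge about ``pointwise limits and dominated convergence'' in point~(a) unnecessary---no approximation is needed once you have the dual-space fact.
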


\begin{proof}
	By Proposition~\ref{prop:SMC-to-marginals} it is enough to show for all $\mu\in \mathcal{P}_{\mathrm{MC}}(\mathbfcal{S})$ and all continuous affine functions $\lambda:\vdist\to\R$ that we have
	\begin{equation}\label{eqn:SMC-int-rep}
	\lambda(\Phi(\mu)) = \int_{\mathbfcal{S}}\lambda(\phi(\boldsymbol{S}))\, d\mu(\boldsymbol{S}).
	\end{equation}
	To do this, we define the maps $T_M:\vdist\to \vdist$ for $M\in\N$ by setting $T_M(\{\nu_N\}_{N\in\N}) = \{\nu_{\min\{N,M\}}\}_{N\in\N}$.
	In particular, $\{T_M\}_{M\in\N}$ are continuous and affine and satisfy $T_M(\boldsymbol{\nu})\to \boldsymbol{\nu}$ as $M\to\infty$.
	Now let $\lambda:\vdist\to\R$ be any continuous affine function, and set $B := \sup_{\boldsymbol{\nu}\in \vdist}|\lambda(\boldsymbol{\nu})| < \infty$, as well as $\lambda_M:\vdist\to\R$  for $M\in\N$ via $\lambda_M:=\lambda \circ T_M$.
	Then, observe that $\{\lambda_M\}_{M\in\N}$ are continuous affine maps that are uniformly bounded by $B$ and that satisfy $\lambda_M(\boldsymbol{\nu})\to \lambda(\boldsymbol{\nu})$ as $M\to\infty$.
	Therefore, by dominated convergence, it is enough to prove \eqref{eqn:SMC-int-rep} for continuous affine functions that depend only on finitely many coordinates of $\vdist$.
	
	Indeed, for $M\in\N$ write $F_M$ for the space of continuous affine functions $\lambda:\vdist\to\R$ that depend on $\{\nu_N\}_{N\in\N}$ only through $\{\nu_N\}_{N\le M}$.
	Then define the the point-evaluations $e_{a,N}:\vdist\to\R$ via $e_{a,N}(\boldsymbol{\nu}) := \nu_N(a)$, and note that $F_M$ is spanned by $\{e_{a,N}: N\le M, a\in \llbracket 0,N\rrbracket \}$.
	Since we clearly have
	\begin{equation*}
	e_{a,N}(\Phi(\mu)) = \int_{\mathbfcal{S}}e_{a,N}(\phi(s))\, d\mu(s)
	\end{equation*}
	for all $N\in\N$ and $a\in\llbracket 0,N\rrbracket$, the result follows.
\end{proof} 

Next we identify certain sub-objects of $\mathcal{P}_{\mathrm{MC}}(\mathbfcal{S})$ whose structure is of central importance.
To do this, fix an element $\boldsymbol{\pi}=\{\pi_N\}_{N\in\N}\in \prod_{N\in\N}\mathcal{P}(\llbracket 0,N\rrbracket)$, which we call a \textit{balayage}.
For any balayage $\boldsymbol{\pi}$, we define the space $\mathcal{P}_{\mathrm{MC}}^{\boldsymbol{\pi}}(\mathbfcal{S})$ to consist of all $\mu\in \mathcal{P}_{\mathrm{MC}}(\mathbfcal{S})$ such that we have $\mu(S_N\in \cdot \ | \ S_{N+1} = N+1) = \pi_N(\cdot)$ for all $N\in\N$ holding $\mu$-almost surely.
In words, $\mathcal{P}_{\mathrm{MC}}^{\boldsymbol{\pi}}(\mathbfcal{S})$ is the space of all laws of SMCs whose backwards transition probabilities are given by $\boldsymbol{\pi}$.
Importantly, we have the following classical result (see the monograph \cite{Preston}, and, in particular Theorem~2.1):

\begin{theorem}\label{thm:SMC-tail-ext}
For any balayage $\boldsymbol{\pi}$, the space $\mathcal{P}_{\mathrm{MC}}^{\boldsymbol{\pi}}(\mathbfcal{S})$ is a simplex.
Moreover, a SMC $\mu\in\mathcal{P}_{\mathrm{MC}}^{\boldsymbol{\pi}}(\mathbfcal{S})$ satisfies $\mu\in \ex{\mathcal{P}_{\mathrm{MC}}^{\boldsymbol{\pi}}(\mathbfcal{S})}$ if and only if its tail $\sigma$-algebra defined via $\bigcap_{N\in\N}\sigma(S_N,S_{N+1},\ldots)$ is trivial.
\end{theorem}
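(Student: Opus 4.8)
The plan is to recognize $\mathcal{P}_{\mathrm{MC}}^{\boldsymbol{\pi}}(\mathbfcal{S})$ as the set of probability measures consistent with a \emph{specification} in the sense of Dynkin, F\"ollmer, and Preston, and then to quote the abstract simplex/extreme-point theorem for such sets (this is the content of \cite[Theorem~2.1]{Preston}). All of the work is therefore in the reduction, after which the tail $\sigma$-algebra in the statement is exactly the ``tail'' appearing in the abstract theorem.

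\emph{Backward kernels.} Fix $\mu\in\mathcal{P}_{\mathrm{MC}}^{\boldsymbol{\pi}}(\mathbfcal{S})$ and write $\mathcal{G}_N:=\sigma(S_N,S_{N+1},\ldots)$, a decreasing family of $\sigma$-algebras with $\bigcap_N\mathcal{G}_N$ equal to the tail. Because every $\boldsymbol{s}\in\mathbfcal{S}$ obeys $s_{N+1}\in\{s_N,N+1\}$, the event $\{S_{N+1}=b\}$ with $b\le N$ forces $S_N=b$, while the event $\{S_{N+1}=N+1\}$ sends $S_N$ to the prescribed $\pi_N$; hence the one-step backward kernel $\mu(S_N\in\cdot\mid S_{N+1})$ is a fixed function of $\boldsymbol{\pi}$, independent of $\mu$, off a $\mu$-null set. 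Using that an inhomogeneous MC is also a backward MC (past and future are conditionally independent given the present at every level), the full ``backward bridge'' $\mu(S_0,\ldots,S_{N-1}\in\cdot\mid\mathcal{G}_N)$ depends on $\boldsymbol{s}$ only through $s_N$ and is an explicit function of $\pi_0,\ldots,\pi_{N-1}$.

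\emph{Assembling a specification.} I would then write down, for each $N$, an everywhere-defined probability kernel $Q_N:\mathbfcal{S}\to\mathcal{P}(\mathbfcal{S})$ which leaves the coordinates $s_N,s_{N+1},\ldots$ fixed and resamples $S_0,\ldots,S_{N-1}$ by running the backward chain started from $s_N$ with transition laws $\pi_{N-1},\pi_{N-2},\ldots$ (always sending a value below the current index to its Dirac mass). One checks that $Q_N$ takes values in $\mathcal{P}(\mathbfcal{S})$, is $\mathcal{G}_N$-measurable in its first argument, and that $\{Q_N\}_{N\in\N}$ satisfies the consistency relations of a specification indexed by the directed set $\N$ (the kernel resampling more coordinates absorbs the other; this uses only the Markov property of the backward chain). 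From the previous paragraph, $\mu\in\mathcal{P}_{\mathrm{MC}}^{\boldsymbol{\pi}}(\mathbfcal{S})$ forces $\mu Q_N=\mu$ for all $N$; conversely, if $\mu\in\mathcal{P}(\mathbfcal{S})$ satisfies $\mu Q_N=\mu$ for all $N$, then past and future are conditionally independent given the present at each level — so $\mu$ is an SMC, the staircase constraint being automatic on $\mathbfcal{S}$ — and integrating $\mu Q_{N+1}=\mu$ against $\{S_N=a,\,S_{N+1}=N+1\}$ returns $\mu(S_N\in\cdot\mid S_{N+1}=N+1)=\pi_N$. Hence
\[
\mathcal{P}_{\mathrm{MC}}^{\boldsymbol{\pi}}(\mathbfcal{S})=\{\mu\in\mathcal{P}(\mathbfcal{S}):\mu Q_N=\mu\text{ for all }N\in\N\},
\]
which exhibits it as a compact convex subset of the locally convex space of finite signed measures on $\mathbfcal{S}$, so the Choquet-theoretic setting of Section~\ref{sec:background} is in force. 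Now \cite[Theorem~2.1]{Preston} yields both assertions at once: the set of measures consistent with a specification is a simplex, and its extreme points are exactly the measures trivial on $\bigcap_N\mathcal{G}_N$, which is the tail of the statement. (For a self-contained extreme-point argument one runs the two standard directions: a tail-trivial $\mu$ cannot split, because any $\mu'$ in the set with $\mu'\le 2\mu$ has a tail-measurable, hence $\mu$-a.s.\ constant, density $d\mu'/d\mu$, obtained as the limit of the backward martingale $\E_\mu[d\mu'/d\mu\mid\mathcal{G}_N]$; and if $0<\mu(A)<1$ for a tail event $A$, then $\mu(\cdot\mid A)$ and $\mu(\cdot\mid A^{c})$ both lie in $\mathcal{P}_{\mathrm{MC}}^{\boldsymbol{\pi}}(\mathbfcal{S})$, since conditioning on a $\mathcal{G}_N$-measurable event preserves $\mu Q_N=\mu$, and they split $\mu$ nontrivially.)

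\emph{The main obstacle.} The one genuinely delicate step is assembling the $Q_N$: the abstract theorem consumes \emph{everywhere-defined} kernels satisfying the consistency relations \emph{pointwise}, not merely $\mu$-almost everywhere, so I must handle the null-set issues around conditioning on $\{S_{N+1}=N+1\}$ (when this event has $\mu$-probability zero the backward kernel is only constrained to be \emph{some} law, but one may simply \emph{declare} it to be $\pi_N$ when building $Q_N$, and this does not affect $\mu Q_N=\mu$) and verify that $Q_N$ indeed lands in $\mathcal{P}(\mathbfcal{S})$. Everything else is bookkeeping or a citation.
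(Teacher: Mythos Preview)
Your proposal is correct and matches the paper's approach: the paper does not give its own proof but simply cites \cite[Theorem~2.1]{Preston} as a classical result, and your write-up supplies exactly the reduction to Preston's specification framework that the citation presupposes. The only difference is that you spell out the construction of the backward kernels $Q_N$ and the identification of $\mathcal{P}_{\mathrm{MC}}^{\boldsymbol{\pi}}(\mathbfcal{S})$ with the Gibbs-type set $\{\mu:\mu Q_N=\mu\}$, which the paper leaves implicit.
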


In the abstract setting, it can be hard to reason directly about whether we have $\mu\in \ex{\mathcal{P}_{\mathrm{MC}}^{\boldsymbol{\pi}}(\mathbfcal{S})}$, but we now show how the correspondence with $\vdist$ can simplify this task greatly.

Crucially, we note that, although $\mathcal{P}_{\mathrm{MC}}(\mathbfcal{S})$ is not itself convex, the restriction $\Phi:C\to\vdist$ is affine for any convex subset $C\subseteq\mathcal{P}_{\mathrm{MC}}(\mathbfcal{S})$.
In particular, Proposition~\ref{prop:SMC-to-marginals} implies that $\Phi:\mathcal{P}^{\boldsymbol{\pi}}_{\mathrm{MC}}(\mathbfcal{S})\to\vdist$ is continuous and affine for any balayage $\boldsymbol{\pi}$.
In fact, we can say more.
Define
\begin{equation*}
\vdist(\boldsymbol{\pi}) := \{\{\nu_N\}_{N\in\N}\in \vdist: \nu_{N} = \nu_{N+1} + \nu_{N+1}(N+1)\pi_N \text{ for all } N\in\N \},
\end{equation*}
and note the following.

\begin{lemma}\label{lem:SMC-balayage-homeo}
For any balayage $\boldsymbol{\pi}$, the map $\Phi:\mathcal{P}^{\boldsymbol{\pi}}_{\mathrm{MC}}(\mathbfcal{S})\to\vdist(\boldsymbol{\pi})$ is an affine homeomorphism.
\end{lemma}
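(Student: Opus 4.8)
The plan is to deduce the lemma almost entirely from Proposition~\ref{prop:SMC-to-marginals}, reducing everything to one elementary computation that pins down the image of $\mathcal{P}^{\boldsymbol{\pi}}_{\mathrm{MC}}(\mathbfcal{S})$ under $\Phi$. Recall that by Proposition~\ref{prop:SMC-to-marginals} the map $\Phi:\mathcal{P}_{\mathrm{MC}}(\mathbfcal{S})\to\vdist$ is a homeomorphism, that (as noted just before the lemma) its restriction to $\mathcal{P}^{\boldsymbol{\pi}}_{\mathrm{MC}}(\mathbfcal{S})$ is continuous and affine, and that $\mathcal{P}^{\boldsymbol{\pi}}_{\mathrm{MC}}(\mathbfcal{S})$ is convex (indeed a simplex, by Theorem~\ref{thm:SMC-tail-ext}); moreover $\vdist(\boldsymbol{\pi})$ is a convex subset of $\vdist$, being cut out of $\vdist$ by the affine relations $\nu_N-\nu_{N+1}-\nu_{N+1}(N+1)\pi_N=0$. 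Since the restriction of a homeomorphism to a subspace is a homeomorphism onto its image (with the subspace topology), and since affineness of $\Phi$ on $\mathcal{P}^{\boldsymbol{\pi}}_{\mathrm{MC}}(\mathbfcal{S})$ is already in hand, the whole lemma reduces to the single set identity $\Phi\big(\mathcal{P}^{\boldsymbol{\pi}}_{\mathrm{MC}}(\mathbfcal{S})\big)=\vdist(\boldsymbol{\pi})$.

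To prove that identity I would fix an arbitrary SMC $\mu\in\mathcal{P}_{\mathrm{MC}}(\mathbfcal{S})$, write $\boldsymbol{\nu}=\{\nu_N\}_{N\in\N}=\Phi(\mu)$ for its sequence of marginals, and establish the equivalence: $\mu\in\mathcal{P}^{\boldsymbol{\pi}}_{\mathrm{MC}}(\mathbfcal{S})$ if and only if $\nu_N=\nu_{N+1}+\nu_{N+1}(N+1)\pi_N$ for every $N\in\N$. The key point is purely pathwise: every element of $\mathbfcal{S}$ satisfies $S_{N+1}\in\{S_N,N+1\}$, so for $a\in\llbracket 0,N\rrbracket$ the event $\{S_{N+1}=a\}$ forces $S_{N+1}=S_N$ and hence $S_N=a$, i.e.\ $\{S_{N+1}=a\}\subseteq\{S_N=a\}$, which gives $\mu(S_N=a,\,S_{N+1}=a)=\mu(S_{N+1}=a)=\nu_{N+1}(a)$; decomposing $\{S_N=a\}$ over the two possible values $a$ and $N+1$ of $S_{N+1}$ then yields
\begin{equation*}
\mu(S_N=a,\,S_{N+1}=N+1)=\nu_N(a)-\nu_{N+1}(a)\qquad\text{for all }a\in\llbracket 0,N\rrbracket,\ N\in\N.
\end{equation*}
On the other hand, since $\mu(S_{N+1}=N+1)=\nu_{N+1}(N+1)$, the defining condition of $\mathcal{P}^{\boldsymbol{\pi}}_{\mathrm{MC}}(\mathbfcal{S})$ at level $N$ is exactly the assertion that $\mu(S_N=a,\,S_{N+1}=N+1)=\pi_N(a)\,\nu_{N+1}(N+1)$ for all $a$. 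Comparing the two expressions for $\mu(S_N=a,\,S_{N+1}=N+1)$ gives the claimed equivalence.

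With the equivalence available, the set identity follows in both directions: applied to $\mu\in\mathcal{P}^{\boldsymbol{\pi}}_{\mathrm{MC}}(\mathbfcal{S})$ it shows $\Phi(\mu)\in\vdist(\boldsymbol{\pi})$, and applied to $\mu:=\Phi^{-1}(\boldsymbol{\nu})$ for an arbitrary $\boldsymbol{\nu}\in\vdist(\boldsymbol{\pi})$ — which exists and lies in $\mathcal{P}_{\mathrm{MC}}(\mathbfcal{S})$ by Proposition~\ref{prop:SMC-to-marginals}, via the explicit formula \eqref{eqn:marginal-to-SMC} — it shows $\mu\in\mathcal{P}^{\boldsymbol{\pi}}_{\mathrm{MC}}(\mathbfcal{S})$ and $\Phi(\mu)=\boldsymbol{\nu}$, giving surjectivity. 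I expect the only delicate point to be the degenerate case $\nu_{N+1}(N+1)=0$ at some level $N$: there the conditional law $\mu(S_N\in\cdot\mid S_{N+1}=N+1)$ conditions on a $\mu$-null event, so the defining constraint of $\mathcal{P}^{\boldsymbol{\pi}}_{\mathrm{MC}}(\mathbfcal{S})$ at level $N$ is vacuous, and one must separately observe that the equation $\nu_N=\nu_{N+1}+\nu_{N+1}(N+1)\pi_N$ also holds automatically there — it collapses to $\nu_N=\nu_{N+1}$, which follows since $0\le\mu(S_N=a,\,S_{N+1}=N+1)\le\mu(S_{N+1}=N+1)=0$ forces $\nu_N(a)=\nu_{N+1}(a)$. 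Once this caveat is handled uniformly in $N$, the equivalence holds at every level, and the lemma follows.
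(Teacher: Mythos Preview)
Your proposal is correct and follows essentially the same route as the paper: both arguments reduce to the identity $\mu(S_N=a,\,S_{N+1}=N+1)=\nu_N(a)-\nu_{N+1}(a)$ and compare it with $\pi_N(a)\,\nu_{N+1}(N+1)$ to establish the two inclusions. Your treatment is in fact slightly more careful, since you explicitly address the degenerate case $\nu_{N+1}(N+1)=0$, whereas the paper simply divides by $\mu(S_{N+1}=N+1)$ without comment.
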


\begin{proof}
First let us show that $\mu\in\mathcal{P}^{\boldsymbol{\pi}}_{\mathrm{MC}}(\mathbfcal{S})$ implies $\Phi(\mu)\in \vdist(\boldsymbol{\pi})$.
Indeed, for $N\in\N$ and $a\in \llbracket 0,N\rrbracket$, we can compute:
\begin{equation*}
\begin{split}
\nu_{N+1}&(N+1)\pi_N(a) + \nu_{N+1}(a) \\
&= \mu(S_{N+1} = N+1)\mu(S_N = a \ | \ S_{N+1} = N+1) + \mu(S_{N+1} = a) \\
&= \mu(S_N =  a,S_{N+1} = N+1) + \mu(S_{N+1} = a) \\
&= \mu(S_N =  a,S_{N+1} = N+1) + \mu(S_N = a,S_{N+1} = a) \\
&= \mu(S_N =  a) \\
&= \nu_N(a),
\end{split}
\end{equation*}
as needed.
Next note by Proposition~\ref{prop:SMC-to-marginals} that $\Phi:\mathcal{P}^{\boldsymbol{\pi}}_{\mathrm{MC}}(\mathbfcal{S})\to\vdist(\boldsymbol{\pi})$ is a continuous injection.
Thus, it suffices to show that $\boldsymbol{\nu} =\{\nu_N\}_{N\in\N}\in\vdist(\boldsymbol{\pi})$ implies $\Phi^{-1}(\boldsymbol{\nu})\in\mathcal{P}^{\boldsymbol{\pi}}_{\mathrm{MC}}(\mathbfcal{S})$.
To do this, take $N\in\N$ and $a\in \llbracket 0,N\rrbracket$, and compute:
\begin{equation*}
\begin{split}
\mu(S_N &= a,S_{N+1} = N+1) \\
&= \mu(S_{N+1} =  N+1 \ | \ S_N =  a)\mu(S_N =  a) \\
&= \left(1-\frac{\nu_{N+1}(a)}{\nu_N(a)}\right)\nu_N(a) \\
&= \nu_N(a)-\nu_{N+1}(a) \\
&= \nu_{N+1}(N+1)\pi_N(a) \\
&= \mu(S_{N+1} = N+1)\pi_N(a).
\end{split}
\end{equation*}
Thus, dividing both sides by $\mu(S_{N+1} = N+1)$
yields
\begin{equation*}
\mu(S_N = a\ | \ S_{N+1} = N+1) = \pi_N(a)
\end{equation*}
as claimed.
\end{proof}

The following simple but important result establishes that a SMC with prescribed backwards transition probabilities can be recovered even when a finite number of its marginal distributions are forgotten.

\begin{lemma}\label{lem:SMC-marginals-residual}
	Take any balayage $\boldsymbol{\pi} = \{\pi_N\}_{N\in\N}$, and any $a\in\N$ and $\{\nu_N\}_{N\ge a}\in\prod_{N\ge a}\mathcal{P}(\llbracket 0,N\rrbracket)$.
	If we have
	\begin{equation*}
	\nu_N(a') = \nu_{N+1}(a') + \nu_{N+1}(N+1)\pi_N
	\end{equation*}
	for all $N\ge a$ and all $a'\in \llbracket 0,N\rrbracket$, then there is a unique $\{\tilde{\nu}_N\}_{N\in\N}\in \vdist(\boldsymbol{\pi})$ with $\tilde{\nu}_N = \nu_N$ for all $N\ge a$.
\end{lemma}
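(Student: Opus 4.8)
The plan is to construct the required completion by downward recursion from level $a$, and then to verify that it lands in $\vdist(\boldsymbol{\pi})$; uniqueness will be forced by the recursion itself.

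First I would keep $\tilde{\nu}_N := \nu_N$ for every $N\ge a$, and for $N<a$ define $\tilde{\nu}_N$ by the formula that membership in $\vdist(\boldsymbol{\pi})$ demands, namely
\[
\tilde{\nu}_N(a') := \tilde{\nu}_{N+1}(a') + \tilde{\nu}_{N+1}(N+1)\,\pi_N(a') \qquad (a'\in\llbracket 0,N\rrbracket),
\]
working down through $N=a-1,a-2,\ldots,0$. The first thing to verify, by the same downward induction, is that each $\tilde{\nu}_N$ is a probability measure on $\llbracket 0,N\rrbracket$: non-negativity is immediate from non-negativity of $\tilde{\nu}_{N+1}$ and $\pi_N$, and summing over $a'\in\llbracket 0,N\rrbracket$ telescopes to $\sum_{a'=0}^{N}\tilde{\nu}_{N+1}(a') + \tilde{\nu}_{N+1}(N+1)$, which equals $1$ precisely because $\pi_N$ has total mass $1$ and $\tilde{\nu}_{N+1}\in\mathcal{P}(\llbracket 0,N+1\rrbracket)$.

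Next I would check that $\{\tilde{\nu}_N\}_{N\in\N}\in\vdist$, i.e.\ $\tilde{\nu}_{N+1}\le\tilde{\nu}_N$ for all $N$: for $N<a$ this is visible from the defining formula since the correction term $\tilde{\nu}_{N+1}(N+1)\pi_N(a')$ is non-negative, and for $N\ge a$ it follows from the hypothesised relation together with the same non-negativity (which also shows that the given tail $\{\nu_N\}_{N\ge a}$ is automatically decreasing). The balayage identity $\tilde{\nu}_N = \tilde{\nu}_{N+1} + \tilde{\nu}_{N+1}(N+1)\pi_N$ then holds at every level --- below $a$ by construction, at and above $a$ by hypothesis --- so $\{\tilde{\nu}_N\}_{N\in\N}\in\vdist(\boldsymbol{\pi})$. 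For uniqueness, any $\{\tilde{\nu}_N'\}_{N\in\N}\in\vdist(\boldsymbol{\pi})$ with $\tilde{\nu}_N'=\nu_N$ for $N\ge a$ must satisfy the defining relation of $\vdist(\boldsymbol{\pi})$ at levels $a-1,a-2,\ldots$, which is exactly the recursion above, so $\tilde{\nu}_N'=\tilde{\nu}_N$ for all $N<a$ by downward induction, hence for all $N\in\N$.

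I do not expect a genuine obstacle here; the only subtlety worth flagging is that the recursion preserves total mass --- and hence stays inside $\vdist$ rather than drifting into signed or sub-probability measures --- precisely because each $\pi_N$ is itself a probability measure, which is where the ``balayage'' hypothesis is used.
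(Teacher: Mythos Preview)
Your proposal is correct and follows the same approach as the paper: define the missing levels by the downward recursion forced by the $\vdist(\boldsymbol{\pi})$ relation, and observe that uniqueness is automatic from that same recursion. If anything, your version is more thorough, since the paper's proof omits the explicit checks that the recursion yields genuine probability measures and that monotonicity holds.
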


\begin{proof}
If $\{\tilde{\nu}_N\}_{N\in\N}\in \vdist(\boldsymbol{\pi})$ has $\tilde{\nu}_a =  \nu_a$, then, by backwards induction on $N<a$, it follows that we have $\nu_N(a') = \nu_{N+1}(a') + \nu_{N+1}(N+1)\pi_{N}^{\boldsymbol{K}}(a')$ for all $a'\in \llbracket 0,N\rrbracket$.
Since this is in fact an element of  $\vdist(\boldsymbol{\pi})$, the result follows.
\end{proof}

It is straightforward to see that the element $\boldsymbol{0} := \{\delta_0\}_{N\in\N}$ has $\boldsymbol{0}\in\vdist(\boldsymbol{\pi})$ for all balayages $\boldsymbol{\pi}$.
Moreover, if balayages $\boldsymbol{\pi} = \{\pi_N\}_{N\in\N}$ and $\boldsymbol{\pi}' = \{\pi_N'\}_{N\in\N}$ are such that $\pi_N \neq \pi_N'$ holds for all $N\ge 1$, then it follows that  we have $\vdist(\boldsymbol{\pi})\cap \vdist(\boldsymbol{\pi}') =  \{\boldsymbol{0}\}$.
We believe it would be interesting, although tangential to this work, to undertake a more careful study of the overlap between the regions $\{\vdist(\boldsymbol{\pi})\}_{\boldsymbol{\pi}}$ where  $\boldsymbol{\pi}$ ranges over all balayages.
Also see Figure~\ref{fig:SMC-marginals} for an illustration summarizing the geometric aspects of this correspondence between SMCs and their  marginal distributions.

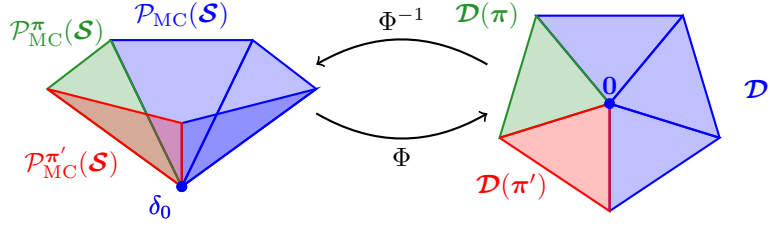
\begin{figure}
\begin{tikzpicture}[scale=0.65]
\definecolor{Green}{HTML}{27912b}

\coordinate (b) at (2.75,-1);
\coordinate (v1) at (0,1);
\coordinate (v2) at (2.75,0.3);
\coordinate (v3) at (5.5,1);
\coordinate (v4) at (4.2,2);
\coordinate (v5) at (1.3,2);

\draw[fill,fill opacity=0.25,color=blue,thick,-] (v2) -- (v3) -- (b)  -- (v2);
\draw[fill,fill opacity=0.25,color=blue,thick,-] (v3) -- (v4) -- (b)  -- (v3);
\draw[fill,fill opacity=0.25,color=blue,thick,-] (v4) -- (v5) -- (b)  -- (v4);
\draw[fill,fill opacity=0.25,color=Green,thick,-] (v5) -- (v1) -- (b)  -- (v5);
\draw[fill,fill opacity=0.25,color=red,thick,-] (v1) -- (v2) -- (b)  -- (v1);

\draw[fill,color=blue] (b) circle [radius=0.1] node[below left] {$\delta_{0}$};

\node[red] at (0.5,-0.5) {$\mathcal{P}_{\mathrm{MC}}^{\boldsymbol{\pi}'}(\mathbfcal{S})$};
\node[Green] at (0.2,2.15) {$\mathcal{P}_{\mathrm{MC}}^{\boldsymbol{\pi}}(\mathbfcal{S})$};
\node[blue] at (2.75,2.5) {$\mathcal{P}_{\mathrm{MC}}(\mathbfcal{S})$};

\coordinate (c) at (11.5,0.7);
\coordinate (w1) at (9.25,0);
\coordinate (w2) at (11.5,-1.5);
\coordinate (w3) at (13.75,0);
\coordinate (w4) at (13,2.5);
\coordinate (w5) at (10,2.5);

\draw[fill,fill opacity=0.25,color=blue,thick,-] (w2) -- (w3) -- (c)  -- (w2);
\draw[fill,fill opacity=0.25,color=blue,thick,-] (w3) -- (w4) -- (c)  -- (w3);
\draw[fill,fill opacity=0.25,color=blue,thick,-] (w4) -- (w5) -- (c)  -- (w4);
\draw[fill,fill opacity=0.25,color=Green,thick,-] (w5) -- (w1) -- (c)  -- (w5);
\draw[fill,fill opacity=0.25,color=red,thick,-] (w1) -- (w2) -- (c)  -- (w1);

\draw[fill,color=blue] (b) circle [radius=0.1] node[below left] {$\delta_{0}$};

\node[red] at (9.5,-1) {$\vdist(\boldsymbol{\pi}')$};
\node[Green] at (9,2.5) {$\vdist(\boldsymbol{\pi})$};
\node[blue] at (14.5,1) {$\vdist$};

\draw[fill,color=blue] (c) circle [radius=0.1] node[above] {$\boldsymbol{0}$};

\path[thick, ->] (5.5,0.5) edge[bend right] node [below] {$\Phi$} (9,0.5);
\path[thick, ->] (9,1.5) edge[bend right] node [above] {$\Phi^{-1}$} (5.5,1.5);
\end{tikzpicture}
\caption{A cartoon depicting the correspondence between SMCs and their sequences of marginal distributions.
Here, $\mathcal{P}_{\mathrm{MC}}(\mathbfcal{S})$ is the space of all SMCs (compact and non-convex), $\vdist$ is the space of all sequences of marginal distributions of SMCs (compact and convex), and $\Phi$ is a homeomorphism.
We have coverings (not bona fide partitions) of $\mathcal{P}_{\mathrm{MC}}(\mathbfcal{S})$ and $\vdist$ by collections of simplices $\{\mathcal{P}^{\boldsymbol{\pi}}_{\mathrm{MC}}(\mathbfcal{S})\}_{\boldsymbol{\pi}}$ and $\{\vdist(\boldsymbol{\pi})\}_{\boldsymbol{\pi}}$, respectively, where $\boldsymbol{\pi}$ ranges over all balayages.
Finally, we have that $\mathcal{P}^{\boldsymbol{\pi}}_{\mathrm{MC}}(\mathbfcal{S})$ and $\vdist(\boldsymbol{\pi})$ are affinely homeomorphic for all $\boldsymbol{\pi}$, where the affine homeomorphism is just the restriction of $\Phi$ to $\mathcal{P}^{\boldsymbol{\pi}}_{\mathrm{MC}}(\mathbfcal{S})$.}
\label{fig:SMC-marginals}
\end{figure}

As a consequence of Theorem~\ref{thm:SMC-tail-ext} and Lemma~\ref{lem:SMC-balayage-homeo}, we now have the following result characterizing the triviality of the tail $\sigma$-algebra of a SMC.
This is the first step on the way to characterizing the triviality of the tail of a VMC.

\begin{corollary}\label{cor:SMC-tail-ext}
A SMC $\mu\in\mathcal{P}^{\boldsymbol{\pi}}_{\mathrm{MC}}(\mathbfcal{S})$ has trivial tail if and only if $\Phi(\mu)\in \ex{\vdist(\boldsymbol{\pi})}$.
\end{corollary}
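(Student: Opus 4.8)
The plan is to simply compose the two results that immediately precede the statement. By Theorem~\ref{thm:SMC-tail-ext}, the SMC $\mu\in\mathcal{P}^{\boldsymbol{\pi}}_{\mathrm{MC}}(\mathbfcal{S})$ has trivial tail $\sigma$-algebra if and only if $\mu\in\ex{\mathcal{P}^{\boldsymbol{\pi}}_{\mathrm{MC}}(\mathbfcal{S})}$. So it suffices to show that $\mu\in\ex{\mathcal{P}^{\boldsymbol{\pi}}_{\mathrm{MC}}(\mathbfcal{S})}$ if and only if $\Phi(\mu)\in\ex{\vdist(\boldsymbol{\pi})}$. This is a general fact: an affine homeomorphism between compact convex sets carries extreme points to extreme points, and Lemma~\ref{lem:SMC-balayage-homeo} furnishes exactly such a map $\Phi:\mathcal{P}^{\boldsymbol{\pi}}_{\mathrm{MC}}(\mathbfcal{S})\to\vdist(\boldsymbol{\pi})$.

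Concretely, I would argue the extreme-point transfer directly. Suppose $\Phi(\mu)=(1-\alpha)\boldsymbol{\nu}'+\alpha\boldsymbol{\nu}''$ with $\boldsymbol{\nu}',\boldsymbol{\nu}''\in\vdist(\boldsymbol{\pi})$ and $\alpha\in(0,1)$. Applying the affine bijection $\Phi^{-1}$ gives $\mu=(1-\alpha)\Phi^{-1}(\boldsymbol{\nu}')+\alpha\Phi^{-1}(\boldsymbol{\nu}'')$ with both summands in $\mathcal{P}^{\boldsymbol{\pi}}_{\mathrm{MC}}(\mathbfcal{S})$; if $\mu$ is extreme there, then $\Phi^{-1}(\boldsymbol{\nu}')=\Phi^{-1}(\boldsymbol{\nu}'')=\mu$, and injectivity of $\Phi$ forces $\boldsymbol{\nu}'=\boldsymbol{\nu}''=\Phi(\mu)$, so $\Phi(\mu)$ is extreme. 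The reverse implication is symmetric, using that $\Phi$ itself is affine and $\mathcal{P}^{\boldsymbol{\pi}}_{\mathrm{MC}}(\mathbfcal{S})$ need only be convex (not all of $\mathcal{P}_{\mathrm{MC}}(\mathbfcal{S})$, which is not) — and $\mathcal{P}^{\boldsymbol{\pi}}_{\mathrm{MC}}(\mathbfcal{S})$ is convex because it is a simplex by Theorem~\ref{thm:SMC-tail-ext}. Chaining the two equivalences yields the claim.

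There is essentially no obstacle here; the only point requiring the slightest care is to invoke convexity of $\mathcal{P}^{\boldsymbol{\pi}}_{\mathrm{MC}}(\mathbfcal{S})$ (so that "extreme point" even makes sense and so that the midpoint decompositions stay inside the set), which is exactly what the simplex assertion of Theorem~\ref{thm:SMC-tail-ext} supplies, together with the fact that $\Phi$ restricted to a convex subset of $\mathcal{P}_{\mathrm{MC}}(\mathbfcal{S})$ is affine as noted before Lemma~\ref{lem:SMC-balayage-homeo}. The proof should therefore be only a few lines.
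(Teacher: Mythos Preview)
Your proposal is correct and matches the paper's approach exactly: the paper presents this corollary without proof, simply stating it ``as a consequence of Theorem~\ref{thm:SMC-tail-ext} and Lemma~\ref{lem:SMC-balayage-homeo}'', which is precisely the composition you spell out. Your additional care in verifying that affine homeomorphisms preserve extreme points is a welcome elaboration but not a different route.
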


Presumably, the value of Corollary~\ref{cor:SMC-tail-ext} over Theorem~\ref{thm:SMC-tail-ext} is that it is somehow easier to check for extremality in $\vdist(\boldsymbol{\pi})$ than in $\mathcal{P}_{\mathrm{MC}}^{\boldsymbol{\pi}}(\mathbfcal{S})$.
The remainder of this section is spent verifying this, and, more precisely, showing convex analysis provides a robust set of tools for determining extremality in $\vdist(\boldsymbol{\pi})$.

To begin, note that the simpliciality of $\mathcal{P}^{\boldsymbol{\pi}}_{\mathrm{MC}}(\mathbfcal{S})$ and Lemma~\ref{lem:SMC-balayage-homeo} together imply that $\vdist(\boldsymbol{\pi})$ is a simplex, for any balayage $\boldsymbol{\pi}$.
However, it is instructive to also consider an alternative way to prove this:
For each $N\in\N$, define the map $p_N:\mathcal{P}(\llbracket0,N+1\rrbracket)\to \mathcal{P}(\llbracket0,N\rrbracket)$ via $p_N(\mu_{N+1}) :=  \mu_{N+1} + \mu_{N+1}(N+1)\pi_N$ for $\mu_{N+1}\in\mathcal{P}(\llbracket0,N+1\rrbracket)$.
It is clear that $p_N$ is an affine continuous surjection.
Moreover, the collection $(\{\mathcal{P}(\llbracket0,N\rrbracket)\}_{N\in\N},\{p_N\})_{N\in\N}$ is a projective system in the category whose objects are simplices and whose morphisms are continuous affine maps, and its projective limit is exactly $\vdist(\boldsymbol{\pi})$.
Since it is known that projective limits in this category are themselves simplices (see \cite[Theorem~13]{DaviesVincentSmith}, \cite[Theorem~2]{Jellett}, or \cite[Corollary~12.35]{IntRepTheory}), it follows that $\vdist(\boldsymbol{\pi})$ is a simplex.

The reason for this framing is that the extreme point structure of such projective limits of simplices is well-understood.
Indeed, let us situate our work in the context of the elegant, yet apparently under-appreciated, work of Sternfeld \cite{Sternfeld}:
For each $N\in\N$, the maps $\{e_{a,N}: a\in \llbracket 0,N\rrbracket\}$ defined via $e_{a,N}(\boldsymbol{\nu}) =  \nu_N(a)$ for $\boldsymbol{\nu} = \{\nu_N\}_{N\in\N}\in\vdist(\boldsymbol{\pi})$ form a \textit{peaked partition of unity} in $\vdist(\boldsymbol{\pi})$, and they also satisfy
\begin{equation*}
e_{a,N} = e_{a,N+1} + \pi_N(a)e_{N+1,N+1}
\end{equation*}
for all $a\in \llbracket 0,N\rrbracket$.
As such, the balayge probabilities $\{\pi_N\}_{N\in\N}$ are a \textit{representing matrix} for the simplex $\vdist(\boldsymbol{\pi})$.
From this representing matrix we can characterize many properties of $\vdist(\boldsymbol{\pi})$ itself.
For example, it follows from \cite[Proposition~3.1]{Sternfeld} that for each $a\in\N$ there exists a unique element $\delta_a^{\boldsymbol{\pi}}\in\ex{\vdist(\boldsymbol{\pi})}$ such that $e_{a,N}(\delta_a^{\boldsymbol{\pi}}) = 1$ holds for all $N\ge a$.
Moreover, we have \cite[Proposition~3.4]{Sternfeld}:

\begin{proposition}
For any balayage $\boldsymbol{\pi}$, the sequence $\{\delta_a^{\boldsymbol{\pi}}\}_{a\in\N}$ is dense in $\ex{\vdist(\boldsymbol{\pi})}$. 
\end{proposition}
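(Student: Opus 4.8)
The plan is to transport the problem through the homeomorphism $\Phi$ into the world of staircase Markov chains, and there to run a backward (Lévy downward) martingale argument. Fix a balayage $\boldsymbol{\pi}$. For $a\in\N$ write $\mu_a^{\boldsymbol{\pi}}:=\Phi^{-1}(\delta_a^{\boldsymbol{\pi}})\in\mathcal{P}_{\mathrm{MC}}^{\boldsymbol{\pi}}(\mathbfcal{S})$ (well defined by Lemma~\ref{lem:SMC-balayage-homeo}), and note that, since $e_{a,N}(\delta_a^{\boldsymbol{\pi}})=1$ for all $N\ge a$, the $N$th marginal of $\mu_a^{\boldsymbol{\pi}}$ equals $\delta_a$ whenever $N\ge a$. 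Now let $\boldsymbol{\nu}\in\ex{\vdist(\boldsymbol{\pi})}$ be arbitrary, put $\mu:=\Phi^{-1}(\boldsymbol{\nu})$, and let $\{S_N\}_{N\in\N}$ be an SMC with law $\mu$; by Corollary~\ref{cor:SMC-tail-ext} its tail $\sigma$-algebra $\mathcal{T}:=\bigcap_{N\in\N}\sigma(S_N,S_{N+1},\ldots)$ is $\mu$-trivial. I claim it is enough to show that $\delta_{S_N}^{\boldsymbol{\pi}}\to\boldsymbol{\nu}$ in $\vdist(\boldsymbol{\pi})$ for $\mu$-almost every path: each $\delta_{S_N}^{\boldsymbol{\pi}}$ is a member of the sequence $\{\delta_a^{\boldsymbol{\pi}}\}_{a\in\N}$ (as $S_N\in\N$), so this places $\boldsymbol{\nu}$ in its closure, which is exactly the assertion. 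Because $\vdist(\boldsymbol{\pi})$ carries the product topology inherited from $\prod_{N\in\N}\mathcal{P}(\llbracket 0,N\rrbracket)$, it is in turn enough to prove, for each fixed $M\in\N$ and $a'\in\llbracket 0,M\rrbracket$, that $e_{a',M}(\delta_{S_N}^{\boldsymbol{\pi}})\to e_{a',M}(\boldsymbol{\nu})=\nu_M(a')$ $\mu$-a.s.

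The crux is the following identity: for every $N\ge M$,
\[
  \E_\mu\big[\ind\{S_M=a'\}\ \big|\ \sigma(S_N,S_{N+1},\ldots)\big]\;=\;e_{a',M}\big(\delta_{S_N}^{\boldsymbol{\pi}}\big)\qquad\mu\text{-a.s.}
\]
To establish it, first use the Markov property of $\{S_N\}$ (together with $\{S_M=a'\}\in\sigma(S_0,\ldots,S_N)$) to rewrite the left-hand side as $\mu(S_M=a'\mid S_N)$. Next, read the Markov property backwards: the regular conditional law of $(S_0,\ldots,S_N)$ given $S_N$ is that of a backward Markov chain whose one-step kernel from level $j$ to level $j-1$ is $\delta_{S_j}$ on the event $\{S_j<j\}$ — this is forced, since $S_j\in\{S_{j-1},j\}$ — and is $\pi_{j-1}$ on the event $\{S_j=j\}$ — by membership of $\mu$ in $\mathcal{P}_{\mathrm{MC}}^{\boldsymbol{\pi}}(\mathbfcal{S})$. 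In particular this backward kernel depends on $\mu$ only through $\boldsymbol{\pi}$. Composing these fixed kernels from level $N$ down to level $M$, started from $\delta_s$, therefore computes $\mu(S_M\in\cdot\mid S_N=s)$ for $\mu\circ S_N^{-1}$-a.e. $s$; but it computes equally well the $M$th marginal of $\mu_s^{\boldsymbol{\pi}}$, which (for $s\le N$) has $S_N=s$ almost surely. Hence $\mu(S_M=a'\mid S_N=s)=e_{a',M}(\delta_s^{\boldsymbol{\pi}})$ for $\mu\circ S_N^{-1}$-a.e. $s$, which is the displayed identity. (It also covers the degenerate regime $S_N\le M$, where every kernel along the way is a point mass and both sides reduce to $\delta_{S_N}(a')$.)

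Granting the identity, the conclusion is a single application of backward martingale convergence. The $\sigma$-algebras $\sigma(S_N,S_{N+1},\ldots)$ decrease, as $N\to\infty$, to the trivial tail $\mathcal{T}$, so by Lévy's downward theorem the reverse martingale $\E_\mu[\ind\{S_M=a'\}\mid\sigma(S_N,S_{N+1},\ldots)]$ converges $\mu$-a.s. to $\E_\mu[\ind\{S_M=a'\}\mid\mathcal{T}]=\E_\mu[\ind\{S_M=a'\}]=\nu_M(a')$, the middle equality holding because $\mathcal{T}$ is trivial. By the identity the reverse martingale equals $e_{a',M}(\delta_{S_N}^{\boldsymbol{\pi}})$, so $e_{a',M}(\delta_{S_N}^{\boldsymbol{\pi}})\to\nu_M(a')$ $\mu$-a.s. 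Intersecting over the countably many pairs $(M,a')$ the corresponding $\mu$-null exceptional events yields a single event of full $\mu$-measure on which $\delta_{S_N}^{\boldsymbol{\pi}}\to\boldsymbol{\nu}$ coordinatewise, hence in $\vdist(\boldsymbol{\pi})$. By the reduction in the first paragraph this proves the proposition.

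The main obstacle is the justification of the crux identity, i.e. the precise bookkeeping showing that the reversed one-step kernels depend on $\mu$ only through $\boldsymbol{\pi}$ (so that the same backward propagation computes the conditional law of $S_M$ given $S_N=s$ for every $\mu'\in\mathcal{P}_{\mathrm{MC}}^{\boldsymbol{\pi}}(\mathbfcal{S})$, in particular for $\mu$ and for $\mu_s^{\boldsymbol{\pi}}$) and that $\mu_s^{\boldsymbol{\pi}}$ is the SMC with $S_N=s$ almost surely for $N\ge s$. This is ``just'' the Markov property applied in reverse, combined with the defining backward-transition constraint of $\mathcal{P}_{\mathrm{MC}}^{\boldsymbol{\pi}}(\mathbfcal{S})$ and the characterizing property $e_{a,N}(\delta_a^{\boldsymbol{\pi}})=1$ for $N\ge a$; everything downstream — the reverse-martingale limit and the product-topology assembly — is routine.
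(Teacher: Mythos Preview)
Your argument is correct, and it takes a genuinely different route from the paper. The paper does not give a proof of this proposition; it simply records it as \cite[Proposition~3.4]{Sternfeld}, relying on that reference's convex-analytic treatment of projective limits of finite-dimensional simplices via representing matrices.

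Your route is the probabilistic one: having transported to $\mathcal{P}_{\mathrm{MC}}^{\boldsymbol{\pi}}(\mathbfcal{S})$ via Lemma~\ref{lem:SMC-balayage-homeo}, you identify the random sequence $\{e_{a',M}(\delta_{S_N}^{\boldsymbol{\pi}})\}_{N\ge M}$ with the reverse martingale $\{\E_\mu[\ind\{S_M=a'\}\mid S_N,S_{N+1},\ldots]\}_{N\ge M}$ (using that the backward one-step kernels of any SMC in $\mathcal{P}_{\mathrm{MC}}^{\boldsymbol{\pi}}(\mathbfcal{S})$ are determined by $\boldsymbol{\pi}$ alone), then invoke tail-triviality via Corollary~\ref{cor:SMC-tail-ext}, which logically precedes this proposition and does not depend on it, together with L\'evy's downward theorem. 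This is essentially the standard Martin-boundary fact that the sample path of a tail-trivial chain converges almost surely in the Martin topology---indeed, it is the mechanism underlying Theorem~\ref{thm:SMC-tail-ext} itself. The payoff is a self-contained proof within the SMC framework of the present paper, with the bonus pathwise statement that for every extreme $\boldsymbol{\nu}$ one has $\delta_{S_N}^{\boldsymbol{\pi}}\to\boldsymbol{\nu}$ almost surely under $\Phi^{-1}(\boldsymbol{\nu})$. The citation to Sternfeld, by contrast, is shorter and situates the result in the broader convex-analytic context that the paper needs anyway for the subsequent Theorems~\ref{thm:Sternfeld-ext-cpt}--\ref{thm:Sternfeld-ext-K0}.
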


\begin{remark}\label{rem:states-compactification}
In light of the preceding result, it seems reasonable to think of the set $\ex{\vdist(\boldsymbol{\pi})}$, which need not  be compact, as a sort of ``compactification'' of $\N$.
Indeed, the proof of Theorem~\ref{thm:SMC-tail-ext} relies on the construction of a common Martin compactification for all of the MCs in the class $\mathcal{P}_{\mathrm{MC}}^{\boldsymbol{\pi}}(\mathbfcal{S})$, so it follows that $\ex{\mathcal{P}_{\mathrm{MC}}^{\boldsymbol{\pi}}(\mathbfcal{S})}$ is essentially the uncountable union of all of these compact spaces.
By Lemma~\ref{lem:SMC-balayage-homeo}, this idea transfers to $\ex{\vdist(\boldsymbol{\pi})}$.
Thus, one should think of $\ex{\vdist(\boldsymbol{\pi})}\setminus \{\delta_a^{\boldsymbol{\pi}}\}_{a\in\N}$ as states ``at infinity''.
\end{remark}

If a balayage $\boldsymbol{\pi}$ is such that $\ex{\vdist(\boldsymbol{\pi})}$ is compact, then a SMC $\mu\in \mathcal{P}_{\mathrm{MC}}^{\boldsymbol{\pi}}(\mathbfcal{S})$ has trivial tail $\sigma$-algebra if and only if there is a sequence $\{a_N\}_{N\in\N}$ in $\N$ with $\delta_{a_N}^{\boldsymbol{\pi}} \to \Phi(\mu)$.
Thus, in this case, the task of finding all extreme points of $\vdist(\boldsymbol{\pi})$ can be reduced to the task of computing $\{\delta_a^{\boldsymbol{\pi}}\}_{a\in\N}$ and all subsequential limits thereof.
While this is quite difficult to do abstractly, it is often quite easy for most concrete examples of interest.
We will see many examples of this later.

Thus, it remains to try to understand which balayages $\boldsymbol{\pi} =\{\pi_N\}_{N\in\N}$ are such that $\ex{\vdist(\boldsymbol{\pi})}$ is compact.
Fortunately, this question is also answered by Sternfeld.
To state the answer, define $\pi_{a,N}\in \mathcal{P}(\llbracket 0,N\rrbracket)$ for $a,N\in\N$ via the recursion
\begin{equation*}
\pi_{a,N} = \begin{cases}
\delta_{a}, &\text{ if } a\le N, \\
\sum_{b=0}^{a-1}\pi_{a-1}(b)\pi_{a',N}, &\text{ if } a> N,
\end{cases}
\end{equation*}
and note that we have $\pi_{N+1,N} = \pi_N$ for all $N\in\N$.
More generally, for all $a,N\in\N$, the probability measure $\pi_{a,N}$ represents the law of the first index in $\llbracket 0,N\rrbracket$ which is hit by a particle starting at $a$ which moves backwards according to the balayage probabilities $\{\pi_N\}_{N\in\N}$.
We refer to $\{\pi_{a,N}\}_{a,N\in\N}$ as the \textit{extended balayage probabilities} of $\boldsymbol{\pi}$.
Finally, we have $\pi_{a,N}(b) =  e_{b,N}(\delta_a^{\boldsymbol{\pi}})$ for all $a,N\in\N$ and $a'\in \llbracket 0,N\rrbracket$ by \cite[Proposition~3.2]{Sternfeld}.
Now we can use the following result which appears as \cite[Theorem~4.2]{Sternfeld}:

\begin{theorem}\label{thm:Sternfeld-ext-cpt}
	The set $\ex{\vdist(\boldsymbol{\pi})}$ is compact if and only if for each $M\in\N$ and $c\in \llbracket  0,M\rrbracket$ we have
	\begin{equation*}
	\sum_{b = 0}^{N} (\pi_{b,M}(c))^2\pi_{a,N}(b) \to (\pi_{a,M}(c))^2
	\end{equation*}
	uniformly in $a\in\N$ as $N\to\infty$.
\end{theorem}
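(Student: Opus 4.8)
The plan is to recast the statement entirely in terms of the convex geometry of $K := \vdist(\boldsymbol{\pi})$, which is a metrizable Choquet simplex with the coordinate functionals $e_{a,N}(\boldsymbol{\nu}) = \nu_N(a)$ lying in $A(K)$, separating points, summing to $1$ at each level, satisfying $e_{a,N} = e_{a,N+1} + \pi_N(a)\,e_{N+1,N+1}$, and obeying $e_{b,N}(\delta_a^{\boldsymbol{\pi}}) = \pi_{a,N}(b)$. For $f\in C(K)$ write $\widehat{f}$ for its upper envelope (the least concave upper semicontinuous majorant); recall the standard facts that $\widehat f(x) = \int f\,dm$ for any maximal (Choquet) representing measure $m$ of $x$ when $f$ is convex, that for a simplex this maximal measure is the unique one carried by $\ex{K}$, and that a convex lower semicontinuous function attains its supremum over $K$ at a point of $\ex{K}$ (Jensen against representing measures). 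Since $\ex{K}\subseteq K$ is relatively compact, the theorem's left-hand condition is equivalent to $\ex{K}$ being closed, i.e.\ to $K$ being a Bauer simplex.

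The first real step is to reduce testing the Bauer property to the single family of squares $(e_{c,M})^2$. One direction is immediate: if $K$ is Bauer then $\widehat g$ is continuous for every $g\in C(K)$, in particular for $g=(e_{c,M})^2$. For the converse I would argue contrapositively: if $\ex{K}$ is not closed, pick $z\in\overline{\ex{K}}\setminus\ex{K}$, so its representing measure $m_z\neq\delta_z$, hence (coordinates separate points) some $e_{c,M}$ has positive $m_z$-variance; then $\widehat{(e_{c,M})^2}(z) = \int(e_{c,M})^2\,dm_z > (e_{c,M}(z))^2 = (e_{c,M})^2(z)$, while along any sequence in $\ex{K}$ converging to $z$ one has $\widehat{(e_{c,M})^2} = (e_{c,M})^2$, contradicting lower semicontinuity of $\widehat{(e_{c,M})^2}$. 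So it remains, for fixed $M$ and $c\in\llbracket 0,M\rrbracket$, to characterize continuity of $\widehat{(e_{c,M})^2}$.

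Next I would build an explicit monotone approximation. Set $g_N(\boldsymbol{\nu}) := \sum_{b=0}^N (\pi_{b,M}(c))^2\,\nu_N(b)$, which is continuous and affine on $K$. Using $\nu_N(b) = \nu_{N+1}(b) + \nu_{N+1}(N+1)\pi_N(b)$ together with the balayage composition $\pi_{N+1,M}(c) = \sum_{b=0}^N \pi_N(b)\pi_{b,M}(c)$ (for $N\ge M$) and Jensen's inequality for $t\mapsto t^2$, a short computation gives $g_{N+1}\le g_N$ for $N\ge M$; a second application of Jensen gives $g_N\ge (e_{c,M})^2$. Hence $g_N\downarrow g_\infty$ with $g_\infty$ concave, upper semicontinuous and $\ge (e_{c,M})^2$, so $g_\infty\ge\widehat{(e_{c,M})^2}$. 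For the reverse inequality, note that $g_N(\boldsymbol{\nu}) = \int(e_{c,M})^2\,dm_{T_N\boldsymbol{\nu}}$, where $T_N$ is the truncation map of Proposition~\ref{prop:SMC-int-rep} (which also maps $\vdist(\boldsymbol{\pi})$ into itself) and $m_{T_N\boldsymbol{\nu}} = \sum_{b=0}^N \nu_N(b)\,\delta_{\delta_b^{\boldsymbol{\pi}}}$ is the unique representing measure of $T_N\boldsymbol{\nu} = \sum_{b=0}^N \nu_N(b)\,\delta_b^{\boldsymbol{\pi}}$; since $(e_{c,M})^2$ is convex this says $g_N(\boldsymbol{\nu}) = \widehat{(e_{c,M})^2}(T_N\boldsymbol{\nu})$, and letting $N\to\infty$ with $T_N\boldsymbol{\nu}\to\boldsymbol{\nu}$ and $\widehat{(e_{c,M})^2}$ upper semicontinuous yields $g_\infty(\boldsymbol{\nu})\le\widehat{(e_{c,M})^2}(\boldsymbol{\nu})$. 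Thus $g_\infty=\widehat{(e_{c,M})^2}$, and by Dini's theorem $\widehat{(e_{c,M})^2}$ is continuous iff $g_N\to\widehat{(e_{c,M})^2}$ uniformly on $K$.

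Finally I would transfer the uniform convergence from all of $K$ to the sequence $\{\delta_a^{\boldsymbol{\pi}}\}_{a\in\N}$. The nonnegative error $g_N-\widehat{(e_{c,M})^2}$ is convex (affine minus concave) and lower semicontinuous, so it attains its supremum over $K$ on $\ex{K}$; and lower semicontinuity together with the density of $\{\delta_a^{\boldsymbol{\pi}}\}$ in $\overline{\ex{K}}\supseteq\ex{K}$ gives $\sup_K\big(g_N-\widehat{(e_{c,M})^2}\big) = \sup_{a\in\N}\big(g_N(\delta_a^{\boldsymbol{\pi}})-\widehat{(e_{c,M})^2}(\delta_a^{\boldsymbol{\pi}})\big)$. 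Since $\delta_a^{\boldsymbol{\pi}}$ is extreme, $\widehat{(e_{c,M})^2}(\delta_a^{\boldsymbol{\pi}}) = (e_{c,M}(\delta_a^{\boldsymbol{\pi}}))^2 = (\pi_{a,M}(c))^2$, while $g_N(\delta_a^{\boldsymbol{\pi}}) = \sum_{b=0}^N (\pi_{b,M}(c))^2\pi_{a,N}(b)$, so uniform convergence on $K$ is precisely the displayed uniformity in $a$. Assembling the three steps gives the theorem; this argument is essentially Sternfeld's \cite[Theorem~4.2]{Sternfeld} transported along the dictionary in which $\boldsymbol{\pi}$ is the representing matrix of $\vdist(\boldsymbol{\pi})$ and the $\pi_{a,N}(b)$ are the numbers $e_{b,N}(\delta_a^{\boldsymbol{\pi}})$. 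The part requiring the most care is the identification $g_\infty=\widehat{(e_{c,M})^2}$ and the reduction from the full Bauer property to the squares alone, since both rely on keeping the relevant Choquet-theoretic facts (maximal representing measures, the extreme-point maximum principle, upper envelopes of convex functions) precisely aligned; everything else is routine.
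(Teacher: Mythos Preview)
The paper does not give its own proof of this theorem; it simply records it as \cite[Theorem~4.2]{Sternfeld} and uses it as a black box. There is therefore nothing in the paper to compare your argument against.

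That said, your proposal is a faithful and essentially correct reconstruction of Sternfeld's proof, transported into the paper's notation (which you yourself acknowledge). The three-step structure---reduce the Bauer property to continuity of the upper envelopes $\widehat{(e_{c,M})^2}$, identify those envelopes as the decreasing limit of the affine $g_N$, and then pass from uniform convergence on $K$ to uniform convergence over $\{\delta_a^{\boldsymbol\pi}\}_{a\in\N}$ via convexity and density---is exactly the right architecture, and the key computations (monotonicity of $g_N$ via Jensen, the identification $g_N(\boldsymbol\nu)=\widehat{(e_{c,M})^2}(T_N\boldsymbol\nu)$, and the identity $e_{b,N}(\delta_a^{\boldsymbol\pi})=\pi_{a,N}(b)$) are all sound.

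Two phrasings deserve tightening. First, in your contrapositive step the upper envelope is automatically \emph{upper} semicontinuous; what you are actually exhibiting is a failure of \emph{lower} semicontinuity at $z$ (the limit along extreme points is strictly below the value at $z$), which is indeed discontinuity, but the sentence ``contradicting lower semicontinuity of $\widehat{(e_{c,M})^2}$'' reads as if l.s.c.\ were known a priori. Second, in the final transfer step, a convex l.s.c.\ function on a compact convex set need not literally \emph{attain} its supremum; what you use (and what is true) is $\sup_K h=\sup_{\ex K}h$ by Jensen against representing measures, and then $\sup_{\ex K}h=\sup_{a}h(\delta_a^{\boldsymbol\pi})$ by lower semicontinuity and density of $\{\delta_a^{\boldsymbol\pi}\}$ in $\overline{\ex K}$. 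Neither point is a gap in the mathematics, only in the wording.
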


Observe that the two sides above are equal if $a\in\N$ has $a\le N$, hence we only need to consider the limit uniformly over $a\in\N$ with $a\ge N$ as $N\to\infty$.
It is also instructive to also give two more results of Sternfeld.
These appear, respectively, as \cite[Theorem~4.1 and Theorem~4.3]{Sternfeld}.

\begin{theorem}\label{thm:Sternfeld-ext-balayage}
	If $\boldsymbol{\pi} = \{\delta_{a_N}\}_{N\in\N}$ for some sequence $\{a_N\}_{N\in\N}\in  \prod_{N\in\N}\llbracket 0,N\rrbracket$, then $\ex{\vdist(\boldsymbol{\pi})}$ is compact and totally disconnected.
\end{theorem}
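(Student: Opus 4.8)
The plan is to identify, for a deterministic balayage $\boldsymbol{\pi} = \{\delta_{a_N}\}_{N\in\N}$, the set $\ex{\vdist(\boldsymbol{\pi})}$ with the image under $\phi$ of the set of \emph{$\boldsymbol{\pi}$-compatible staircases}
\begin{equation*}
\mathbfcal{S}_{\boldsymbol{\pi}} := \left\{\{S_N\}_{N\in\N}\in\mathbfcal{S}: S_N = a_N\text{ whenever }S_{N+1} = N+1\right\}.
\end{equation*}
Granting the identity $\ex{\vdist(\boldsymbol{\pi})} = \phi(\mathbfcal{S}_{\boldsymbol{\pi}})$, both assertions follow quickly. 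The set $\mathbfcal{S}_{\boldsymbol{\pi}}$ is a closed subset of the compact space $\mathbfcal{S}$, and $\phi$ is continuous (being the composition of $\boldsymbol{S}\mapsto\delta_{\boldsymbol{S}}$ with $\Phi$) and injective (since $\Phi(\delta_{\boldsymbol{S}})$ has $N$th marginal $\delta_{S_N}$, which recovers $\boldsymbol{S}$); hence $\phi$ restricts to a homeomorphism of $\mathbfcal{S}_{\boldsymbol{\pi}}$ onto $\ex{\vdist(\boldsymbol{\pi})}$, and in particular the latter is compact. Moreover, $\mathbfcal{S}$ is a closed subspace of the countable product $\prod_{N\in\N}\llbracket 0,N\rrbracket$ of finite discrete spaces, hence totally disconnected; so are its subspace $\mathbfcal{S}_{\boldsymbol{\pi}}$ and the homeomorphic space $\ex{\vdist(\boldsymbol{\pi})}$.

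It remains to establish the two inclusions. For $\phi(\mathbfcal{S}_{\boldsymbol{\pi}})\subseteq\ex{\vdist(\boldsymbol{\pi})}$: given $\boldsymbol{S}\in\mathbfcal{S}_{\boldsymbol{\pi}}$, the Dirac mass $\delta_{\boldsymbol{S}}$ is a deterministic SMC whose backwards transition probabilities are exactly $\boldsymbol{\pi}$ --- the condition $\delta_{\boldsymbol{S}}(S_N\in\cdot\mid S_{N+1} = N+1) = \pi_N$ being automatic off the event $\{S_{N+1} = N+1\}$ and, on that event, forced by $S_N = a_N$ --- so $\delta_{\boldsymbol{S}}\in\mathcal{P}_{\mathrm{MC}}^{\boldsymbol{\pi}}(\mathbfcal{S})$, and since its tail $\sigma$-algebra is trivial, Corollary~\ref{cor:SMC-tail-ext} gives $\phi(\boldsymbol{S}) = \Phi(\delta_{\boldsymbol{S}})\in\ex{\vdist(\boldsymbol{\pi})}$. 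For the reverse inclusion, I would first check that every marginal of $\delta_a^{\boldsymbol{\pi}}$ is a point mass: indeed $(\delta_a^{\boldsymbol{\pi}})_N = \delta_a$ for $N\ge a$ by definition of $\delta_a^{\boldsymbol{\pi}}$, and for $N < a$ one obtains $(\delta_a^{\boldsymbol{\pi}})_N$ by iterating the projection $p_N$ --- which here sends $\mu_{N+1}$ to $\mu_{N+1} + \mu_{N+1}(N+1)\delta_{a_N}$, and so carries point masses to point masses --- backwards from $\delta_a$. Therefore $\Phi^{-1}(\delta_a^{\boldsymbol{\pi}})\in\mathcal{P}_{\mathrm{MC}}^{\boldsymbol{\pi}}(\mathbfcal{S})$ is a measure all of whose one-dimensional marginals are deterministic, hence a Dirac mass $\delta_{\boldsymbol{S}^{(a)}}$ for a single staircase $\boldsymbol{S}^{(a)}$, which lies in $\mathbfcal{S}_{\boldsymbol{\pi}}$ because $\delta_{\boldsymbol{S}^{(a)}}\in\mathcal{P}_{\mathrm{MC}}^{\boldsymbol{\pi}}(\mathbfcal{S})$. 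Thus $\{\delta_a^{\boldsymbol{\pi}}\}_{a\in\N}\subseteq\phi(\mathbfcal{S}_{\boldsymbol{\pi}})$; as $\phi(\mathbfcal{S}_{\boldsymbol{\pi}})$ is compact (hence closed in $\vdist(\boldsymbol{\pi})$) and $\{\delta_a^{\boldsymbol{\pi}}\}_{a\in\N}$ is dense in $\ex{\vdist(\boldsymbol{\pi})}$, we conclude $\ex{\vdist(\boldsymbol{\pi})}\subseteq\phi(\mathbfcal{S}_{\boldsymbol{\pi}})$.

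This argument is mostly an assembly of facts already available --- the correspondence of Lemma~\ref{lem:SMC-balayage-homeo}, the extremality criterion of Corollary~\ref{cor:SMC-tail-ext}, the density of $\{\delta_a^{\boldsymbol{\pi}}\}_{a\in\N}$ in $\ex{\vdist(\boldsymbol{\pi})}$, and the elementary topology of $\mathbfcal{S}$ --- so I do not expect a genuinely hard step; the only place needing a little care is the verification that $\Phi^{-1}(\delta_a^{\boldsymbol{\pi}})$ is a Dirac mass supported in $\mathbfcal{S}_{\boldsymbol{\pi}}$. As an alternative (in particular, for compactness alone) one could invoke Theorem~\ref{thm:Sternfeld-ext-cpt}: since $\boldsymbol{\pi}$ is deterministic, an easy induction on the defining recursion shows each extended balayage probability $\pi_{a,N}$ is a Dirac mass $\delta_{\tau(a,N)}$ with $\pi_{\tau(a,N),M} = \pi_{a,M}$ whenever $M\le N$, which makes the sum in Theorem~\ref{thm:Sternfeld-ext-cpt} exactly equal to its limit once $N\ge M$; but the route through $\mathbfcal{S}_{\boldsymbol{\pi}}$ is more transparent and yields the sharper structural statement $\ex{\vdist(\boldsymbol{\pi})} = \phi(\mathbfcal{S}_{\boldsymbol{\pi}})$.
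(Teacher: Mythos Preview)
Your proof is correct. The paper does not actually prove this theorem; it simply records it as a citation of \cite[Theorem~4.1]{Sternfeld}. Your argument is genuinely different in that it is self-contained within the paper's own framework: you use the map $\phi$, the affine homeomorphism of Lemma~\ref{lem:SMC-balayage-homeo}, the tail-triviality criterion of Corollary~\ref{cor:SMC-tail-ext}, and the density of $\{\delta_a^{\boldsymbol{\pi}}\}_{a\in\N}$ in $\ex{\vdist(\boldsymbol{\pi})}$, rather than appealing to Sternfeld's representing-matrix machinery. The identification $\ex{\vdist(\boldsymbol{\pi})} = \phi(\mathbfcal{S}_{\boldsymbol{\pi}})$ is in fact a sharper structural statement than what the theorem asserts, and the deduction of compactness and total disconnectedness from the elementary topology of $\prod_{N\in\N}\llbracket 0,N\rrbracket$ is clean. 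Sternfeld's original proof proceeds combinatorially via the representing matrix and does not pass through the staircase picture at all; your route is more probabilistic and arguably fits the surrounding narrative of the paper better. The one step you flag as needing care --- that $\Phi^{-1}(\delta_a^{\boldsymbol{\pi}})$ is a Dirac mass on $\mathbfcal{S}_{\boldsymbol{\pi}}$ --- is handled correctly: once all one-dimensional marginals of an $\mathbfcal{S}$-valued law are point masses, the law itself is a point mass, and the backward-transition constraint then forces membership in $\mathbfcal{S}_{\boldsymbol{\pi}}$.
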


\begin{theorem}\label{thm:Sternfeld-ext-K0}
	Suppose the set $\ex{\vdist(\boldsymbol{\pi})}$ is compact.
	Then, we have $\#(\ex{\vdist(\boldsymbol{\pi})}\setminus \{\delta_a^{\boldsymbol{\pi}}\}_{a\in\N}) = 1$ if and only if $\{\pi_{a,N}(N)\}_{a\in\N}$ converges for each $N\in\N$.
\end{theorem}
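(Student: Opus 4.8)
The plan is to reduce the stated analytic condition to the single topological assertion that the canonical dense sequence $\{\delta_a^{\boldsymbol{\pi}}\}_{a\in\N}$ converges in $\vdist(\boldsymbol{\pi})$, and then to read off $\#(\ex{\vdist(\boldsymbol{\pi})}\setminus\{\delta_a^{\boldsymbol{\pi}}\}_{a\in\N})$ from standard facts about sequences in compact metrizable spaces. For the reduction, recall that $\vdist(\boldsymbol{\pi})$ carries the topology it inherits from $\prod_{N\in\N}\mathcal{P}(\llbracket 0,N\rrbracket)$, which is generated by the point-evaluations $e_{b,N}$, and that $e_{b,N}(\delta_a^{\boldsymbol{\pi}})=\pi_{a,N}(b)$; hence $\{\delta_a^{\boldsymbol{\pi}}\}_{a\in\N}$ converges in $\vdist(\boldsymbol{\pi})$ as $a\to\infty$ if and only if $\{\pi_{a,N}(b)\}_{a\in\N}$ converges for every $N\in\N$ and every $b\in\llbracket 0,N\rrbracket$. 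I claim the diagonal convergences $\{\pi_{a,N}(N)\}_{a\in\N}$ alone force all of these: evaluating the representing-matrix relation $e_{b,N}=e_{b,N+1}+\pi_N(b)e_{N+1,N+1}$ at $\delta_a^{\boldsymbol{\pi}}$ gives
\[
\pi_{a,N+1}(b)=\pi_{a,N}(b)-\pi_N(b)\,\pi_{a,N+1}(N+1)
\]
for all $a\in\N$ and $b\in\llbracket 0,N\rrbracket$, and since $\pi_{a,0}(0)=1$ for every $a$, an induction on $N$ propagates convergence from the diagonal entries to all entries. With the trivial converse, the hypothesis of the theorem is equivalent to the convergence of the sequence $\{\delta_a^{\boldsymbol{\pi}}\}_{a\in\N}$ in $\vdist(\boldsymbol{\pi})$.

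Next I would use compactness. Since $\ex{\vdist(\boldsymbol{\pi})}$ is assumed compact it is closed in $\vdist(\boldsymbol{\pi})$, so the cited density of $\{\delta_a^{\boldsymbol{\pi}}\}_{a\in\N}$ in $\ex{\vdist(\boldsymbol{\pi})}$ strengthens to $\ex{\vdist(\boldsymbol{\pi})}=\overline{\{\delta_a^{\boldsymbol{\pi}}:a\in\N\}}$; also $a\mapsto\delta_a^{\boldsymbol{\pi}}$ is injective, since for $a<a'$ one has $e_{a',a'}(\delta_{a'}^{\boldsymbol{\pi}})=1$ but $e_{a',a'}(\delta_a^{\boldsymbol{\pi}})=\pi_{a,a'}(a')=\delta_a(a')=0$. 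Granting that each $\delta_a^{\boldsymbol{\pi}}$ is moreover an isolated point of $\ex{\vdist(\boldsymbol{\pi})}$ (addressed below), the complement $\ex{\vdist(\boldsymbol{\pi})}\setminus\{\delta_a^{\boldsymbol{\pi}}\}_{a\in\N}$ is exactly the set of subsequential limits of the injective sequence $\{\delta_a^{\boldsymbol{\pi}}\}_{a\in\N}$; in a compact metrizable space this set is nonempty, and the sequence converges if and only if it is a singleton. Combining this with the reduction of the first paragraph yields the claimed equivalence between the convergence of $\{\pi_{a,N}(N)\}_{a\in\N}$ for each $N$ and the equality $\#(\ex{\vdist(\boldsymbol{\pi})}\setminus\{\delta_a^{\boldsymbol{\pi}}\}_{a\in\N})=1$.

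I expect the main obstacle to be precisely the isolation claim invoked in the second paragraph: ruling out that a subsequence of distinct $\delta_a^{\boldsymbol{\pi}}$ accumulates at some $\delta_{a'}^{\boldsymbol{\pi}}$, for otherwise the limiting extreme point would already lie in $\{\delta_a^{\boldsymbol{\pi}}\}_{a\in\N}$ and the complement could be empty. This is where the soft topological reasoning runs out and one must instead exploit the finer quantitative features of Sternfeld's analysis of representing matrices of simplices with compact extreme boundary (cf.\ the criterion in Theorem~\ref{thm:Sternfeld-ext-cpt}); I expect essentially all of the substantive work of the proof to be concentrated there, with the remaining steps being elementary manipulations of the displayed identities together with standard compactness arguments.
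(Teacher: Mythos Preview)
The paper does not give its own proof of this theorem; it is quoted verbatim as \cite[Theorem~4.3]{Sternfeld} and no argument is supplied. There is therefore nothing in the paper to compare your proposal against.

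That said, your reduction in the first paragraph is correct and is the natural one: the recursion $\pi_{a,N+1}(b)=\pi_{a,N}(b)-\pi_N(b)\,\pi_{a,N+1}(N+1)$ together with $\pi_{a,0}(0)\equiv 1$ does propagate convergence from the diagonal entries to all coordinates, so the analytic hypothesis is exactly convergence of $\{\delta_a^{\boldsymbol{\pi}}\}_{a\in\N}$ in $\vdist(\boldsymbol{\pi})$. Under the compactness assumption, $\ex{\vdist(\boldsymbol{\pi})}$ is then the closure of this injective sequence, and your topological argument goes through \emph{provided} every $\delta_a^{\boldsymbol{\pi}}$ is isolated in $\ex{\vdist(\boldsymbol{\pi})}$.

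You are right that this isolation claim is where the work lies, and it is a genuine issue rather than a formality. For the balayage $\pi_N=\delta_0$ for all $N$, one checks directly that $\vdist(\boldsymbol{\pi})$ is affinely homeomorphic to $\{(p_a)_{a\ge 1}:p_a\ge 0,\ \sum_a p_a\le 1\}$, with $\delta_a^{\boldsymbol{\pi}}$ corresponding to the $a$th unit vector for $a\ge 1$ and $\delta_0^{\boldsymbol{\pi}}$ to the origin; thus $\delta_a^{\boldsymbol{\pi}}\to\delta_0^{\boldsymbol{\pi}}$ and $\ex{\vdist(\boldsymbol{\pi})}\setminus\{\delta_a^{\boldsymbol{\pi}}\}_{a\in\N}=\varnothing$, while every sequence $\{\pi_{a,N}(N)\}_{a\in\N}$ converges. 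So either Sternfeld's original formulation carries a standing non-degeneracy hypothesis that the paper has not reproduced, or the intended conclusion is $\#(\ex{\vdist(\boldsymbol{\pi})}\setminus\{\delta_a^{\boldsymbol{\pi}}\}_{a\in\N})\le 1$. In either reading, the substantive step is exactly the one you flagged, and its resolution lives in Sternfeld's analysis rather than in this paper.
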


By combining the probabilistic aspects of the first part of this section with the convexity theory aspects of the second part of this section, we can analyze some important examples.
In particular, we compute $\mathcal{P}_{\mathrm{MC}}^{\boldsymbol{\pi}}(\mathbfcal{S})$ for various balayages $\boldsymbol{\pi}$ and are able to determine which elements thereof have trivial tail.
Many of these examples were previously treated in \cite[Subsection~4.3]{EvansJaffeVMC}; the previous a hands-on approach was only able to lead to partial results, but we now use Sternfeld's theorems to complete the analyses.

For the sake of clarity, we choose to depict elements of $\vdist$ visually as infinite lower triangular matrices.
Additionally, we make a few other simplifications.
First, for $a\in\N$, when rows $0,\ldots a-1$ of $\delta_a^{\boldsymbol{\pi}}$ have already been depicted, we will omit the ellipses and it will be understood that all remaining rows are equal to $\delta_a$.
Second, we will use matrices with square brackets to denote that the 0th row and column have been omitted; this is convenient since the entry in the 0th column can always be deduced from the other entries, and since it is often equal to 0 itself.
For example, for any balayage $\boldsymbol{\pi}$, the element $\delta_0^{\boldsymbol{\pi}} = \boldsymbol{0}\in\vdist(\boldsymbol{\pi})$  can be visualized as follows:
\begin{equation*}
\boldsymbol{0} = \delta_0^{\boldsymbol{\pi}} = \begin{pmatrix}
1 & & & & & \\
1 & 0 & & & & \\
1 & 0 & 0 & & & \\
1 & 0 & 0 & 0 & & \\
1 & 0 & 0 & 0 & 0 & \\
\vdots & \vdots & \vdots & \vdots & \vdots & \ddots \\
\end{pmatrix} = \begin{bmatrix}
0 & & & & \\
0 & 0 & & & \\
0 & 0 & 0 & & \\
0 & 0 & 0 & 0 & \\
\vdots & \vdots & \vdots & \vdots & \ddots \\
\end{bmatrix}
\end{equation*}
Now we move on to the examples of interest.

\begin{example}\label{ex:down-balayage}
Consider the balayage $\boldsymbol{\pi} = \{\delta_{N}\}_{N\in\N}$.
By Theorem~\ref{thm:Sternfeld-ext-balayage}, the space $\ex{\vdist(\boldsymbol{\pi})}$ is compact and totally disconnected.
In fact, we directly compute $\delta_a^{\boldsymbol{\pi}} = \{\delta_{\min\{N,a\}}\}_{N\in\N}$ for $a\in\N$.
Visually, we can depict $\delta_1^{\boldsymbol{\pi}},\delta_2^{\boldsymbol{\pi}},\delta_3^{\boldsymbol{\pi}}$, and $\delta_4^{\boldsymbol{\pi}}$ as
\begin{equation*}
\begin{bmatrix}
1 & & & \\
1 & 0 & & \\
1 & 0 & 0 & \\
1 & 0 & 0 & 0 \\
\end{bmatrix}, \quad \begin{bmatrix}
1 & & & \\
0 & 1 & & \\
0 & 1 & 0 & \\
0 & 1 & 0 & 0 \\
\end{bmatrix}, \quad \begin{bmatrix}
1 & & & \\
0 & 1 & & \\
0 & 0 & 1 & \\
0 & 0 & 1 & 0 \\
\end{bmatrix}, \text{ and }\begin{bmatrix}
1 & & & \\
0 & 1 & & \\
0 & 0 & 1 & \\
0 & 0 & 0 & 1 \\
\end{bmatrix}.
\end{equation*}
From here we see that $\{\delta_a^{\boldsymbol{\pi}}\}_{a\in\N}$ converges as $a\to\infty$ to $\{\delta_N\}_{N\in\N}$, or, visually
\begin{equation*}
\lim_{a\to\infty}\delta_a^{\boldsymbol{\pi}} = \begin{bmatrix}
1 & & & & \\
0 & 1 & & & \\
0 & 0 & 1 & & \\
0 & 0 & 0 & 1 & \\
\vdots & \vdots & \vdots & \vdots & \ddots \\
\end{bmatrix}.
\end{equation*}
In particular, we have shown
\begin{equation*}
\ex{\vdist(\boldsymbol{\pi})} = \{\delta_{a}^{\boldsymbol{\pi}}\}_{N\in\N}: a\in\N \}\cup\left\{\lim_{a\to\infty}\delta_a^{\boldsymbol{\pi}}\right\},
\end{equation*}
which is the same conclusion completed by the more hands-on analysis of \cite[Example~4.13]{EvansJaffeVMC}.
\end{example}

\begin{example}\label{ex:two-down-balayage}
Consider the balayage $\boldsymbol{\pi} = \{\pi_N\}_{N\in\N}$ where $\pi_N = \delta_{N-1}$ for all $N\ge 1$, and $\pi_0 =\delta_0$.
Again by Theorem~\ref{thm:Sternfeld-ext-balayage}, the space $\ex{\vdist(\boldsymbol{\pi})}$ is compact and totally disconnected, and we can calculate its extreme points exactly.
For each $a\in\N$, the point $\delta_a^{\boldsymbol{\pi}} = \{\nu_N^a\}_{N\in\N}$ is given by
\begin{equation*}
\nu_N^a = \begin{cases}
\delta_0, &\text{ if } N = 0, \\
\delta_{a-2\lceil\frac{a-N}{2}\rceil}, &\text{ if } 0< N< a, \\
\delta_a, &\text{ if } N\ge a.
\end{cases}
\end{equation*}
for $N\in\N$.
Visually, we can depict $\delta_1^{\boldsymbol{\pi}},\delta_2^{\boldsymbol{\pi}},\delta_3^{\boldsymbol{\pi}}$, and $\delta_4^{\boldsymbol{\pi}}$ as
\begin{equation*}
\begin{bmatrix}
1 & & & \\
1 & 0 & & \\
1 & 0 & 0 & \\
1 & 0 & 0 & 0 \\
\end{bmatrix}, \quad \begin{bmatrix}
0 & & & \\
0 & 1 & & \\
0 & 1 & 0 & \\
0 & 1 & 0 & 0 \\
\end{bmatrix}, \quad \begin{bmatrix}
1 & & & \\
1 & 0 & & \\
0 & 0 & 1 & \\
0 & 0 & 1 & 0 \\
\end{bmatrix}, \text{ and }\begin{bmatrix}
0 & & & \\
0 & 1 & & \\
0 & 1 & 0 & \\
0 & 0 & 0 & 1 \\
\end{bmatrix}.
\end{equation*}
Now we see that $\{\delta_a^{\boldsymbol{\pi}}\}_{a\in\N}$ has two subsequential limits, 
\begin{equation*}
\lim_{a\to\infty}\delta_{2a}^{\boldsymbol{\pi}} = \begin{bmatrix}
0 & & & & \\
0 & 1 & & & \\
0 & 1 & 0 & & \\
0 & 0 & 0 & 1 & \\
\vdots & \vdots & \vdots & \vdots & \ddots \\
\end{bmatrix}, \text{ and } \lim_{a\to\infty}\delta_{2a+1}^{\boldsymbol{\pi}} = \begin{bmatrix}
1 & & & & \\
1 & 0 & & & \\
0 & 0 & 1 & & \\
0 & 0 & 1 & 0 & \\
\vdots & \vdots & \vdots & \vdots & \ddots \\
\end{bmatrix}.
\end{equation*}
Thus, we have
\begin{equation*}
\ex{\vdist(\boldsymbol{\pi})} = \{\delta_a^{\boldsymbol{\pi}}: a\in\N\}\cup\left\{\lim_{a\to\infty}\delta_{2a}^{\boldsymbol{\pi}},\lim_{a\to\infty}\delta_{2a+1}^{\boldsymbol{\pi}}\right\},
\end{equation*}
which is also found in \cite[Example~4.14]{EvansJaffeVMC} with a more hands-on approach.
\end{example}

\begin{example}\label{ex:uniform-balayage}
	Consider the balayage $\boldsymbol{\pi} = \{\pi_N\}_{N\in\N}$ where  $\pi_N(a) = 1/N$ and $\pi_N(0) = 0$ for all $N\in\N$ with $N\ge 1$  and $a\in\llbracket 1,N\rrbracket$.
	In other words, we have $\pi_N = \uniform\llbracket 1,N\rrbracket$ for $N\ge 1$, where $\uniform$ represents the uniform distribution on a finite set.
	We easily compute $\delta_a^{\boldsymbol{\pi}} = \{\nu_N^{a}\}_{N\in\N}$,  where
	\begin{equation*}
	\nu_N^a = \begin{cases}
	\delta_0, &\text{ if } N = 0, \\
	\uniform\llbracket 1,N\rrbracket, &\text{ if } 0< N< a, \\
	\delta_N, &\text{ if } N\ge a,
	\end{cases}
	\end{equation*}
	for $N\in\N$.
	As always, we can visualize $\delta_1^{\boldsymbol{\pi}},\delta_2^{\boldsymbol{\pi}},\delta_3^{\boldsymbol{\pi}}$, and $\delta_4^{\boldsymbol{\pi}}$ as
	\begin{equation*}
	\begin{bmatrix}
	1 & & & \\
	1 & 0 & & \\
	1 & 0 & 0 & \\
	1 & 0 & 0 & 0 \\
	\end{bmatrix}, \quad \begin{bmatrix}
	1 & & & \\
	0 & 1 & & \\
	0 & 1 & 0 & \\
	0 & 1 & 0 & 0 \\
	\end{bmatrix}, \quad \begin{bmatrix}
	1 & & & \\
	\sfrac{1}{2} & \sfrac{1}{2} & & \\
	0 & 0 & 1 & \\
	0 & 0 & 1 & 0 \\
	\end{bmatrix}, \text{ and }\begin{bmatrix}
	1 & & & \\
	\sfrac{1}{2} & \sfrac{1}{2} & & \\
	\sfrac{1}{3} & \sfrac{1}{3} & \sfrac{1}{3} & \\
	0 & 0 & 0 & 1 \\
	\end{bmatrix}.
	\end{equation*}
	Observe $\{\delta_a^{\boldsymbol{\pi}}\}_{a\in\N}$ converges as $a\to\infty$ to $\{\nu_N^{\infty}\}_{N\in\N}$, where
	\begin{equation*}
	\nu_N^{\infty} = \begin{cases}
	\delta_0, &\text{ if } N = 0, \\
	\uniform\llbracket 1,N\rrbracket, &\text{ if } N > 0,
	\end{cases}
	\end{equation*}
	for $N\in\N$, which we depict as
	\begin{equation*}
	\lim_{a\to\infty}\delta_{2a}^{\boldsymbol{\pi}} = \begin{bmatrix}
	1 & & & & \\
	\sfrac{1}{2} & \sfrac{1}{2} & & & \\
	\sfrac{1}{3} & \sfrac{1}{3} & \sfrac{1}{3} & & \\
	\sfrac{1}{4} & \sfrac{1}{4} & \sfrac{1}{4} & \sfrac{1}{4} & \\
	\vdots & \vdots & \vdots & \vdots & \ddots \\
	\end{bmatrix}.
	\end{equation*}
	The question of whether $\{\nu_N^{\infty}\}_{N\in\N}$ is an extreme point of $\vdist(\boldsymbol{\pi})$ was left open in \cite[Example~4.15]{EvansJaffeVMC}, and we now show that the answer is affirmative.
	
	A simple argument by induction shows that the extended balayage probabilities of $\boldsymbol{\pi}$ are given by $\pi_{a,N} = \pi_N$ for $a,N\in\N$ with $a\ge N\ge 1$.
	Now fix $M,N,\in\N$ with $M\le N\le a$, and take $c\in \llbracket 0,M\rrbracket$.
	We easily compute:
	\begin{equation*}
	\begin{split}
	\sum_{b = 0}^{N} (\pi_{b,M}(c))^2\pi_{a,N}(b) &= \sum_{b = 0}^{M} (\delta_{b}(c))^2\pi_{N}(b) +  \sum_{b = M+1}^{N} (\pi_{M}(c))^2\pi_{N}(b) \\
	&= \frac{1}{N}+ \frac{N-M}{NM^2} \\
	&\to \frac{1}{M^2} \\
	&= (\pi_{a,M}(c))^2,
	\end{split}
	\end{equation*}
	uniformly over $a\in\N$.
	In particular, we conclude by Theorem~\ref{thm:Sternfeld-ext-cpt} that $\ex{\vdist(\boldsymbol{\pi})}$ is compact.
	In fact, since we have $\pi_{a,N}(N) = \frac{1}{N}$ for $a\ge N$, it follows that $\{\pi_{a,N}(N)\}_{a\in\N}$ converges for each $N\in\N$, hence by Theorem~\ref{thm:Sternfeld-ext-K0} that $\ex{\vdist(\boldsymbol{\pi})}\setminus\{\delta_a^{\boldsymbol{\pi}}\}_{a\in\N}$ contains a single point.
	By the calculations just performed, this implies
	\begin{equation*}
	\ex{\vdist(\boldsymbol{\pi})} = \{\delta_a^{\boldsymbol{\pi}}: a\in\N\}\cup\left\{\lim_{a\to\infty}\delta_{a}^{\boldsymbol{\pi}}\right\}.
	\end{equation*}
	We also write $\vuniform := \lim_{a\to\infty}\delta_a^{\boldsymbol{\pi}}$, and we call this the \textit{virtual uniform distribution}.
	
	It is instructive to focus a bit more on $\vuniform\in \vdist(\boldsymbol{\pi})$, and to identify its corresponding SMC $\mu := \Phi^{-1}(\vuniform)$.
	By Proposition~\ref{prop:SMC-to-marginals}, it satisfies $\mu(S_0 = 0)  = \mu(S_1 = 1) = 1$, and
	\begin{equation*}
	\mu(S_{N+1} \in \cdot \ | \ S_N) =\frac{N}{N+1}\delta_{S_N}(\cdot) + \frac{1}{N+1}\delta_{N+1}(\cdot)
	\end{equation*}
	for all $N\ge 1$.
	In particular, if we define $A_N = \{S_{N+1} = N+1\}$ for $N\in\N$, then the events $\{A_N\}_{N\in\N}$ are independent and satisfy $\mu(A_N) = 1/(N+1)$.
	Since $\sum_{N\in\N}\mu(A_N) = \infty$ we have by the Borel-Cantelli lemma that $\mu(S_N\to \infty) = 1$.
	Nonetheless, the tail $\sigma$-algebra $\tail = \bigcap_{N\in\N}\sigma(S_N,S_{N+1},\ldots)$ is trivial under $\mu$ by Corollary~\ref{cor:SMC-tail-ext} since $\vuniform\in \ex{\vdist(\boldsymbol{\pi})}$.
\end{example}

\section{Decomposition Theorem}\label{sec:decomp}

In this section we show that any VMC can be decomposed into a countably infinite collection of independent SMCs.
Moreover, these constituent SMCs have a simple description in terms of the VID and VTM representing the given VMC.

For the remainder of this section, we let $(\Omega,\F,\P)$ denote a probability space on which is defined a VMC $\boldsymbol{X} = \{X_N\}_{N\in\N}$.
While in the previous section we viewed staircase processes primarily as probability measures on  $\mathbfcal{S}$, we now view staircase processes as $\mathbfcal{S}$-valued random variables defined on $(\Omega,\F,\P)$.

To begin, fix any $a\in\N$.
For $N\ge a$, define $V_N^a := \#\{k\in\N: I_{X_N,\{a\}}(k) < \infty \}$ to be the number of visits that $X_N$ makes to state $a$.
By the projectivity of $\boldsymbol{X}$, this value does not depend on $N\ge a$ almost surely, hence we can set $V^a$ to be its common value.
(To be concrete, one can take $V^a := V^a_a$.)
By the projectivity of $\boldsymbol{X}$ we also have
\begin{equation*}
\begin{split}
\{V^a > k\} &= \bigcap_{N\ge a}\{I_{X_N,\{a\}}(k) < \infty \}, \text{ and} \\
\{V^a \le k\} &= \bigcap_{N\ge a}\{I_{X_N,\{a\}}(k) = \infty \} \\
\end{split}
\end{equation*}
almost surely, for all $k\in\N$.
Notice that $V^a$ is the number of visits that $\boldsymbol{X}$ makes to $a$, at all levels for which this notion is well-defined.
These considerations leads us to an important classification of the points of $\N$ with respect to the given VMC:

\begin{definition}
With respect to a given VMC, a point $a\in\N$ is called \textit{infinitely-visited} if $\P(V^a = \infty) = 1$, \textit{once-visited} if $\P(V^a = 1) = 1$, \textit{never-visited} if $\P(V^a = 0) = 1$, and \textit{randomly-visited} otherwise.
\end{definition}

\begin{lemma}\label{lem:trasient-prob}
A point $a\in\N$ is randomly-visited if and only if $V^a$ is non-degenerate.
\end{lemma}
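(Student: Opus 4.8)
The plan is to dispose of the ``if'' direction immediately and to concentrate all the work on the ``only if'' direction, whose only real content is ruling out that $V^a$ is almost surely equal to a finite constant $k\ge 2$. Indeed, if $V^a$ is non-degenerate, then in particular it is not almost surely $0$, not almost surely $1$, and not almost surely $\infty$, so by definition $a$ is neither never-visited, once-visited, nor infinitely-visited; hence $a$ is randomly-visited. For the converse I would prove the contrapositive: if $V^a$ is degenerate, then it is in fact degenerate at $0$, at $1$, or at $\infty$, so that $a$ is not randomly-visited.

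Since $V^a = V^a_a$ almost surely, it suffices to analyze the number $W$ of visits that the finite time-homogeneous Markov chain $X_a$ (on the state space $\llbracket 0,a\rrbracket$, with transition matrix $K_a$) makes to $a$. Let $N_a$ denote the corresponding visit count for the Markov chain with transition matrix $K_a$ but started deterministically at $a$, so that $N_a\ge 1$. A single application of the strong Markov property at the first return time of this chain to $a$ --- writing $N_a = 1 + N_a'$, where $N_a'$ is the visit count of the path after that first return, which on the event of returning is again distributed as $N_a$ --- yields the identity $\P(N_a = k) = \P(N_a\ge 2)\,\P(N_a = k-1)$ for every $k\ge 2$. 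It follows that $N_a$ cannot be almost surely equal to any finite $k\ge 2$: if it were, then $\P(N_a\ge 2)=1$, and the identity would force $\P(N_a = k-1)=1$ as well, which is impossible since $k\neq k-1$.

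It remains to transfer this to $V^a = W$. Applying the strong Markov property of $X_a$ at its first visit time $\tau$ to $a$ gives $\P(W=0) = \P(\tau=\infty)$ and $\P(W = j \mid \tau<\infty) = \P(N_a = j)$ for all $j\ge 1$, since every visit of $X_a$ to $a$ occurs at or after $\tau$ and $X_a(\tau)=a$ on $\{\tau<\infty\}$. Hence if $\P(W = k)=1$ with $1\le k<\infty$, then $\P(\tau<\infty)=1$ (otherwise there would be mass at $0$), so $\P(N_a = k)=1$, which by the previous paragraph forces $k=1$; thus $V^a$ degenerate implies $V^a$ degenerate at $0$, $1$, or $\infty$, as desired. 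The one step that needs care --- rather than a genuine difficulty --- is the observation of the previous paragraph, namely that the Markov property makes the visit count ``geometric'' in nature, so that it can be degenerate only at $1$ or at $\infty$ and never at an intermediate finite value; everything else (the identity $V^a = V^a_a$, which is already available from the projectivity discussion preceding the lemma, and the strong-Markov peeling of the initial excursion to $a$) is routine.
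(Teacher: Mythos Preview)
Your proof is correct and takes essentially the same approach as the paper: both arguments use the strong Markov property of $X_a$ to expose the geometric structure of the visit count, thereby ruling out degeneracy at any finite value $\ge 2$. The only cosmetic difference is that the paper parameterizes the distribution explicitly as $\P(V^a>k)=q_a p_a^{k}$ and then does a case analysis on $(q_a,p_a)$, whereas you argue the contrapositive via the recursion $\P(N_a=k)=\P(N_a\ge 2)\,\P(N_a=k-1)$; these are two phrasings of the same observation.
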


\begin{proof}
It is clear that $V^a$ being non-degenerate implies that $a$ cannot be infinitely-visited, once-visited, or never-visited, hence it must be randomly-visited.
For the converse, observe by the strong Markov property of $X_a$, there exist $q_a,p_a\in [0,1]$ such that we have $\P(V^a> k) = q_ap_a^{k}$ for all $k\ge 1$, as well as $\P(V^a> 0) = q_a$.
In this language, observe that $a$ is infinitely-visited if and only if $p_a = q_a = 1$, once-visited if and only if $q_a = 1$ and $p_a= 0$, and never-visited if and only if $q_a =  0$.
By a simple calculation in propositional logic, we see that $a$ being randomly-visited implies that either $q_a \in (0,1)$ or we have $q_a = 1$ and $p_a \in (0,1)$.
In the first case we have $\P(V^a > 0) = q_a \in (0,1)$, and in the second we have $\P(V^a> 1) = p_a \in (0,1)$, hence in either case $V^a$ is non-degenerate.
\end{proof}

Next, take any $N\in\N$ and write $S^0_N := X_N(0)$.
Now for $N,a,k\in\N$ with $a\le N$, we set
\begin{equation*}
S^{a,k}_N := \begin{cases}
X_N(I_{X_N,\{a\}}(k)+1), &\text{ if } V^a> k, \\
0, &\text{ if } V^a \le k.
\end{cases}
\end{equation*}
In words, $\boldsymbol{S}^0 := \{S^0_N\}_{N\in\N}$ consists of all the initial positions of $\boldsymbol{X}$, and $\boldsymbol{S}^{a,k} := \{S^{a,k}_N\}_{N\in\N}$ consists of all of the states occurring immediately after the $k$th visit that $\boldsymbol{X}$ makes to state $a$, at all levels for which such a notion is well-defined.

We are now ready to prove Proposition~\ref{prop:decomp-SMC}, Proposition~\ref{prop:decomp-independence}, and Proposition~\ref{prop:decomp-sigma-alg}, which together constitute the decomposition theorem that was informally stated as Theorem~\ref{thm:intro-decomp} in the introduction.

Note in the following result that a process which is claimed to be a SMC is actually undefined for its first few levels; any uneasiness caused by this slight abuse of terminology will be resolved shortly in Remark~\ref{rem:decomp-extension}.

\begin{proposition}\label{prop:decomp-SMC}
	The process $\boldsymbol{S}^0$ and the processes $\boldsymbol{S}^{a,k}$ for $a,k\in\N$ are SMCs.
\end{proposition}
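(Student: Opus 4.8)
The plan is to check the two defining properties of a staircase Markov chain: that the process takes values in $\mathbfcal{S}$, and that it is an inhomogeneous Markov chain in the level parameter $N$. Given these, membership in $\mathcal{P}_{\mathrm{MC}}(\mathbfcal{S})$ is automatic, because on $\mathbfcal{S}$ one has $\{S_{N+1}\in\llbracket 0,N\rrbracket\}=\{S_{N+1}=S_N\}$ and $\{S_{N+1}\notin\llbracket 0,N\rrbracket\}=\{S_{N+1}=N+1\}$, so that \eqref{eqn:SMC-stay} and \eqref{eqn:SMC-jump} hold trivially for any Markov law supported on $\mathbfcal{S}$. We would argue only at the levels $N\ge 0$ (for $\boldsymbol{S}^0$) and $N\ge a$ (for $\boldsymbol{S}^{a,k}$) at which the processes are defined, postponing the extension to all levels to Remark~\ref{rem:decomp-extension}.

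First I would establish that the processes land in $\mathbfcal{S}$. Fix $N$. The sample path of $X_N$ is the sample path of $X_{N+1}$ with every occurrence of $N+1$ deleted (and $0$'s appended if only finitely many symbols from $\llbracket 0,N\rrbracket$ remain). Since $a\le N$, this deletion preserves, in order, every visit of the path to $a$; in particular $X_N$ and $X_{N+1}$ make the same number of visits to $a$, so they agree on $V^a$, and when the $k$th visit to $a$ occurs the symbol immediately after it is left unchanged by the deletion unless that symbol is $N+1$, in which case the corresponding symbol of $X_N$ is the first later element of $\llbracket 0,N\rrbracket$ (or $0$, by padding). Either way this gives $S^{a,k}_{N+1}\in\{S^{a,k}_N,N+1\}$, and the same argument applied to the initial symbol gives $S^0_{N+1}\in\{S^0_N,N+1\}$; combined with $S^0_N,S^{a,k}_N\le N$ this places both processes in $\mathbfcal{S}$.

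Next I would prove the Markov property. The essential input is the description, coming from the representation theory of \cite{EvansJaffeVMC}, of the conditional law of $X_{N+1}$ given $X_N$: a realization of $X_{N+1}$ is produced from the path of $X_N$ by inserting, conditionally independently over the occurrences in $X_N$, a geometric (possibly empty) block of $N+1$'s, the chance of a nonempty block immediately after a given occurrence of a state $c$ which is followed in $X_N$ by a state $d$ being $K_{N+1}(c,N+1)\pi_N^{\boldsymbol{K}}(d)/K_N(c,d)$, together with an analogous block inserted before $X_N(0)$ with a probability determined by the initial distributions. From this, the conditional law of $S^0_{N+1}$ given $\sigma(X_N)$ is a function of $S^0_N=X_N(0)$; and, on $\{V^a>k\}$, the conditional law of $S^{a,k}_{N+1}$ given $\sigma(X_N)$ is a function of $S^{a,k}_N$ (the symbol after the $k$th visit to $a$), while on $\{V^a\le k\}$ both $S^{a,k}_N$ and $S^{a,k}_{N+1}$ equal $0$. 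Since $X_j=P_j(X_{j+1})$ gives $\sigma(X_0,\dots,X_N)=\sigma(X_N)$, and hence $\sigma(S^0_0,\dots,S^0_N)\subseteq\sigma(X_N)$ and $\sigma(S^{a,k}_a,\dots,S^{a,k}_N)\subseteq\sigma(X_N)$, the tower property converts these facts into the Markov property of $\boldsymbol{S}^0$ and $\boldsymbol{S}^{a,k}$ with respect to their own filtrations. The one delicate point is the value $0$ of $\boldsymbol{S}^{a,k}$: there the conditional law given $\sigma(X_N)$ is not literally a function of $S^{a,k}_N$, since it tells apart the event $\{V^a\le k\}$ from the event that the $k$th visit to $a$ occurs and is immediately followed by the cemetery state; but on $\{S^{a,k}_N=0\}$ the staircase property already forces $S^{a,k}_a=\dots=S^{a,k}_N=0$, so conditioning on the past coincides with conditioning on $\{S^{a,k}_N=0\}$ and the Markov identity survives.

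I expect the Markov step to be the main obstacle — both in having to invoke precisely the conditional ``insertion'' picture for $X_{N+1}$ given $X_N$, and in the bookkeeping around the cemetery state $0$ and the finitely many omitted initial levels; the fact that the processes lie in $\mathbfcal{S}$ is routine path combinatorics. The same decomposition of the conditional kernel will be reused for the independence assertion, Proposition~\ref{prop:decomp-independence}.
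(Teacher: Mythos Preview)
Your argument is correct, but the route to the Markov property is genuinely different from the paper's. The paper fixes a large level $M$, realizes $S_N^0,\ldots,S_M^0$ (respectively $S_N^{a,k},\ldots,S_M^{a,k}$) as the values of the single classical Markov chain $X_M$ at a decreasing family of stopping times $T_L^0=I_{X_M,\llbracket 0,L\rrbracket}(0)$ (respectively $T_L^{a,k}=I_{X_M,\llbracket 0,L\rrbracket}(I_{X_L,\{a\}}(k)+1)$), and reads off the backward Markov property $\P(S_N=\,\cdot\mid S_{N+1},\ldots,S_M)=\P(S_N=\,\cdot\mid S_{N+1})$ directly from the strong Markov property of $X_M$. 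You instead work forward in the level, invoking the path-valued Markov description of $\{X_N\}_{N\in\N}$ (the ``insertion'' kernel) and the tower property. Your approach has the advantage of making the SMC transition probabilities explicit and, as you note, dovetails with the independence proof; its cost is that it imports the conditional insertion picture as an external ingredient, whereas the paper's argument needs nothing beyond the ordinary strong Markov property of each fixed $X_M$. Your handling of the cemetery state (noting that $S_N^{a,k}=0$ forces the entire past to be $0$, so the Markov identity at $0$ is tautological) is exactly the right patch, and your staircase-property step matches the paper's.
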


\begin{proof}
First we show that $\boldsymbol{S}^0$ is a SMC.
To see that it is a staircase process, simply take $N\in\N$ and note that on $\{S_{N+1}^0 < N+1\}= \{X_{N+1}(0) < N+1\}$ we have $I_{X_{N+1},\llbracket 0,N\rrbracket}(0) = 0$, hence $\{X_{N+1}(0) = X_N(0) \} = \{S_{N+1}^0 = S_N^0 \}$.
By intersecting over all $N\in\N$, we conclude $\boldsymbol{S}^0\in \mathbfcal{S}$ almost surely.
To see that it is a MC, take any $M\ge N$ and any $a_{N},\ldots a_M$ such that $\{S_{N+1}^{0} = a_{N+1},\ldots S_{M}^{0} = a_{M}\}$ has positive probability.
If any of $a_{N+1},\ldots a_M$ are equal to 0, then we have $a_{N+1} = 0$ by the projectivity of $\boldsymbol{S}^0$.
In particular, we get
\begin{equation*}
\P(S_N^{0} = 0 \ | \ S_{N+1}^{0} = a_{N+1},\ldots S_{M}^{0} = a_{M}) = \P(S_N^{0} = 0 \ | \ S_{N+1}^{0} = 0)  = 1.
\end{equation*}
Otherwise, all of $a_{N+1},\ldots a_M$ are non-zero, and we set
\begin{equation*}
T^0_L := I_{X_M,\llbracket 0,L\rrbracket}(0)
\end{equation*}
for $L\in \llbracket N,M\rrbracket$.
Note that $\{T^0_L\}_{L= N}^{M}$ are stopping times adapted to the natural filtration of $X_M$, and that almost surely $\{T^0_L\}_{L= N}^{M}$ is non-increasing and $\{T^0_L\}_{L=N+1}^{M}$ are finite.
In particular, we use the strong Markov property of $X_M$ for the following computation:
If $a_N = 0$, then we have
\begin{equation*}
\begin{split}
&\P(S_N^{0} = 0 \ | \ S_{N+1}^{0} = a_{N+1},\ldots S_{M}^{0} = a_{M}) \\
&= \P(T^0_N =\infty \ | \ X_{M}(T^0_{N+1}) = a_{N+1},\ldots X_{M}(T^{0}_M) = a_{M}) \\
&= \P(T^0_N =\infty \ | \ X_{M}(T^0_{N+1}) = a_{N+1}) \\
&= \P(S_N^{0} = 0 \ | \ S_{N+1}^{0} = a_{N+1}),
\end{split}
\end{equation*}
and, if $a_N\neq0$, then we have
\begin{equation*}
\begin{split}
&\P(S_N^{0} = a_N \ | \ S_{N+1}^{0} = a_{N+1},\ldots S_{M}^{0} = a_{M}) \\
&= \P(T^0_N < \infty,X_M(T^0_N) = a_N \ | \ X_{M}(T^0_{N+1}) = a_{N+1},\ldots X_{M}(T^{0}_M) = a_{M}) \\
&= \P(T^0_N < \infty,X_M(T^0_N) = a_N \ | \ X_{M}(T^0_{N+1}) = a_{N+1}) \\
&= \P(S_N^{0} = a_N \ | \ S_{N+1}^{0} = a_{N+1}) \\
\end{split}
\end{equation*}
as claimed.

Next, we fix $a,k\in\N$ and we show that $\boldsymbol{S}^{a,k}$ is a SMC; the proof is essentially the same as the above.
To see that it is a staircase process, consider the event $\{S_{N+1}^{a,k} < N+1\}$ for $N\ge a$.
On $\{V^a> k\}$ we have $\{I_{X_{N+1},\{a\}}(k) < \infty \}$ almost surely, hence
\begin{equation*}
\begin{split}
\{S_{N+1}^{a,k} < N+1 \} &\subseteq \{X_{N+1}(I_{X_{N+1},\{a\}}(k) +1) < N+1 \} \\
&\subseteq \{X_{N+1}(I_{X_{N+1},\{a\}}(k) +1) = X_{N}(I_{X_{N},\{a\}}(k) +1) \} \\
&\subseteq \{S_{N+1}^{a,k} = S_{N}^{a,k} \},
\end{split}
\end{equation*}
almost surely, and on $\{V^a\le k\}$ we have $\{I_{X_{N+1},\{a\}}(k) = \infty \}$ almost surely, hence $\{S_{N+1}^{a,k} < N+1\} \subseteq \{S_{N+1}^{a,k} = S_N^{a,k} =  0\}$ almost surely.
Thus, we have $\{S_{N+1}^{a,k} < N+1\}\subseteq \{S_{N+1}^{a,k} = S_N^{a,k}\}$ almost surely for all $N\in\N$, and taking intersections gives $\boldsymbol{S}^{a,k}\in \mathbfcal{S}$ almost surely.

Next let us show that $\boldsymbol{S}^{a,k}$ is a MC.
Indeed, take any $M\ge N$, and let $a_{N},\ldots a_M\in\N$ be such that $\{S_{N+1}^{a,k} = a_{N+1},\ldots S_{M}^{a,k}  = a_M \}$ has positive probability.
If any of $a_{N+1},\ldots a_M$ are equal to 0, then we have $a_{N+1} = 0$  by the projectivity of $\boldsymbol{S}^{a,k}$, hence
\begin{equation*}
\P(S_N^{a,k} = a_N \ | \ S_{N+1}^{a,k} = a_{N+1},\ldots S_{M}^{a,k} = a_{M}) = \P(S_N^{a,k} = a_N \ | \ S_{N+1}^{a,k} = a_{N+1}) = 1.
\end{equation*}
Otherwise, all $a_{N+1},\ldots a_M$ are non-zero.
In this case, define
\begin{equation*}
T_L^{a,k} := I_{X_M,\llbracket 0,L\rrbracket}(I_{X_{L},\{a\}}(k)+1)
\end{equation*}
for $L\in \llbracket N,M\rrbracket$, and note that $\{T_L^{a,k}\}_{L=N}^{M}$ is almost surely non-increasing and $\{T_L^{a,k}\}_{L=N+1}^{M}$ are almost surely finite.
Additionally, $\{T_L^{a,k}\}_{L=N}^{M}$ are all stopping times with respect to the natural filtration of $X_M$.
Thus, we compute by the  strong Markov property of $X_M$:
If $a_N = 0$, then
\begin{equation*}
\begin{split}
&\P(S_N^{a,k}= 0 \ | S_{N+1}^{a,k} = a_{N+1},\ldots S_M^{a,k} = a_M) \\
&= \P(T_N^{a,k} = \infty \ | \ X_M(T^{a,k}_{N+1}) = a_{N+1},\ldots X_M(T^{a,k}_M) =a_M) \\
&= \P(T^{a,k}_N = \infty \ | \ X_M(T^{a,k}_{N+1}) = a_{N+1}) \\
&= \P(S_N^{a,k}= 0 \ | S_{N+1}^{a,k} = a_{N+1}),
\end{split}
\end{equation*}
and if $a_N \neq 0$, then
\begin{equation*}
\begin{split}
&\P(S_N^{a,k}= a_N \ |\ S_{N+1}^{a,k} = a_{N+1},\ldots S_M^{a,k} = a_M) \\
&= \P(T^{a,k}_N < \infty,X_M(T^{a,k}_N) = a_N \ | \ X_M(T^{a,k}_{N+1}) = a_{N+1},\ldots X_M(T^{a,k}_M) =a_M) \\
&= \P(T^{a,k}_N < \infty,X_M(T^{a,k}_N) = a_N \ | \ X_M(T^{a,k}_{N+1}) = a_{N+1}) \\
&= \P(S_N^{a,k}= a_N \ | S_{N+1}^{a,k} = a_{N+1}).
\end{split}
\end{equation*}
This shows that $\boldsymbol{S}^{a,k}$ is a MC hence a SMC, and this finishes the proof.
\end{proof}

\begin{proposition}\label{prop:decomp-independence}
	The processes $\{\boldsymbol{S}^0\}\cup\{\boldsymbol{S}^{a,k}\}_{a,k\in\N}$ are mutually independent.
\end{proposition}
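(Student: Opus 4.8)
The plan is to reduce, by projectivity and standard monotone-class reasoning, to a statement about a single large marginal chain $X_M$, and then to read off the independence from the strong Markov property of $X_M$, in the same spirit as the proof of Proposition~\ref{prop:decomp-SMC}.

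First I would observe that it suffices to prove mutual independence of every finite subfamily, and that each of the $\sigma$-algebras $\sigma(\boldsymbol{S}^0)$ and $\sigma(\boldsymbol{S}^{a,k})$ is the increasing union, over $M$, of the $\sigma$-algebra generated by the restriction of the process to its first $M$ levels; hence it is enough to fix a finite set of pairs $(a_1,k_1),\ldots,(a_m,k_m)$ and an $M$ larger than every $a_i$, and to show that the random objects $(S^0_L)_{L\le M}$ and $(S^{a_i,k_i}_L)_{a_i\le L\le M}$, $i=1,\ldots,m$, are mutually independent. By projectivity, $X_L = P_L(X_M)$ for $L\le M$, so each of these objects is a deterministic functional of the single finite-state chain $X_M$: exactly as in Proposition~\ref{prop:decomp-SMC}, $(S^0_L)_{L\le M}$ is obtained by reading $X_M$ at the stopping times $T^0_L := I_{X_M,\llbracket 0,L\rrbracket}(0)$, and $(S^{a_i,k_i}_L)_L$ by reading $X_M$ at the stopping times $T^{a_i,k_i}_L := I_{X_M,\llbracket 0,L\rrbracket}(I_{X_L,\{a_i\}}(k_i)+1)$, with the value $0$ recorded whenever the relevant stopping time is infinite. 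Thus, after this reduction, the objects are honest finitely-valued random variables, and the claim becomes that their joint law is the product of their marginal laws.

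I would prove this factorization by peeling off the recorded strings one at a time with the strong Markov property of $X_M$: ordering the finitely many relevant visit times, at each step the conditional law of the next recorded string, given all the already-recorded ones, is a prescribed function (the initial law $\nu_M$, or the appropriate row of $K_M$) of the state being left rather than of the recorded past — which is precisely the content the strong Markov property supplies — and this yields the product structure by induction. The degenerate cases (a stopping time being infinite, so that $0$ is recorded by convention, or a final infinite excursion of $X_M$) are absorbed using Lemma~\ref{lem:SMC-marginals-residual} and the extension of Remark~\ref{rem:decomp-extension}, which is also what licenses regarding these truncated strings as bona fide SMCs. Finally, letting $M\to\infty$ and letting the finite collection exhaust all pairs gives the full claim, since mutual independence passes to the generated $\sigma$-algebras. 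I expect the delicate point to be the peeling step: the recorded strings for $\boldsymbol{S}^0$ and for the various $\boldsymbol{S}^{a_i,k_i}$ need not be read off disjoint portions of the trajectory of $X_M$ — for instance the $(k_i{+}1)$-th visit of $X_M$ to $a_i$ may occur before $X_M$ first descends below level $a_i$, so the two pieces overlap in time — and one must arrange the successive conditioning so that the strong-Markov peeling is valid regardless of the realized interleaving, taking particular care at the levels where a new state, and hence a new constituent process, first comes into play. The remaining bookkeeping is routine.
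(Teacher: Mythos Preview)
Your overall strategy---reduce by projectivity to a single large marginal chain and then extract independence from its strong Markov property---is exactly the paper's strategy. The difference is in how the reduction is organized.

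You fix a common truncation level $M$ and aim to show that the full initial segments $(S^0_L)_{L\le M}$ and $(S^{a_i,k_i}_L)_{a_i\le L\le M}$ are jointly independent as functionals of $X_M$; this is what forces the interleaved ``peeling'' you correctly flag as the delicate point. The paper sidesteps this entirely: it picks just \emph{one} coordinate from each process, at possibly different levels $N_0,\ldots,N_n$, sets $N:=\max\{N_0,\ldots,N_n\}$, and asserts that the finite family
\[
S^0_{N_0},\ S^{a_1,k_1}_{N_1},\ \ldots,\ S^{a_n,k_n}_{N_n}
\]
is mutually independent by the strong Markov property of $X_N$. It then appeals to a separate measure-theoretic lemma (stated immediately after the proof) saying that a countable family of discrete-time processes is independent as soon as every selection of one coordinate per process is independent. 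That lemma absorbs precisely the bookkeeping you were preparing to do by hand: with it there is no peeling and no interleaving to track.

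So your plan is not wrong, just heavier. The paper's one-coordinate-per-process reduction together with the auxiliary lemma is the cleaner packaging of the same strong-Markov content, and it makes the acknowledged ``delicate point'' in your outline disappear rather than have to be carried out.
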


\begin{proof}
	Take any $n\in\N$ and any $a_1,\ldots a_{n},k_1,\ldots k_{n},N_0,\ldots N_n\in\N$ such that we have $(a_i,k_i)\neq (a_j,k_j)$ for all distinct $i,j\in \llbracket 0,n\rrbracket$.
	Write $N := \max\{N_0,\ldots N_n\}$.
	By the strong Markov property of $X_N$, the random variables
	\begin{equation*}
	\{S^0_{N_0},S^{a_1,k_1}_{N_1},\ldots S^{a_n,k_n}_{N_n}\}
	\end{equation*}
	are mutually independent.
	Therefore we conclude by the following result, whose proof is a simple exercise in measure theory and is thus omitted.
\end{proof}

\begin{lemma}
	Let $\{Z_{\ell}\}_{\ell\in\N}$ be a collection of discrete-time stochastic processes, and write $Z_{\ell} = \{Z_{\ell}(n)\}_{n\in\N}$ for all $\ell\in\N$.
	Then, the following are equivalent:
	\begin{enumerate}
		\item[(i)] $\{Z_{\ell}\}_{\ell\in\N}$ are independent, and
		\item[(ii)] $\{Z_0(n_0),\ldots Z_L(n_L) \}$ are independent for any $L\in\N$ and $n_0,\ldots n_{L}\in\N$.
	\end{enumerate}
\end{lemma}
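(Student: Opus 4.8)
The implication (i) $\Rightarrow$ (ii) is immediate: for each $\ell$ the variable $Z_\ell(n_\ell)$ is $\sigma(Z_\ell)$-measurable, so any family of such variables indexed by distinct values of $\ell$ is automatically mutually independent once the processes themselves are. For the reverse implication the plan is the standard $\pi$--$\lambda$ argument, organized as two successive reductions.

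First I would reduce from the countable family to its finite sub-families. By definition, independence of $\{Z_\ell\}_{\ell\in\N}$ means that $\P\big(\bigcap_{\ell\in F}A_\ell\big)=\prod_{\ell\in F}\P(A_\ell)$ for every finite $F\subseteq\N$ and all $A_\ell\in\sigma(Z_\ell)$, and such a condition never constrains more than finitely many of the $\sigma(Z_\ell)$ at once; hence it suffices to fix $L\in\N$ and prove that $\sigma(Z_0),\ldots,\sigma(Z_L)$ are independent. For the second reduction I would introduce, for each $\ell$, the $\pi$-system $\mathcal{C}_\ell$ consisting of the finite-dimensional cylinder events $\{Z_\ell(t_1)\in B_1,\ldots,Z_\ell(t_j)\in B_j\}$, which generates $\sigma(Z_\ell)$, and invoke the standard fact that $\sigma$-algebras generated by $\pi$-systems are independent as soon as those $\pi$-systems factor. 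Concretely, I would establish that fact here by applying Dynkin's $\pi$--$\lambda$ theorem one coordinate at a time: freeze $C_1\in\mathcal{C}_1,\ldots,C_L\in\mathcal{C}_L$, observe that $\{A\in\sigma(Z_0):\P(A\cap C_1\cap\cdots\cap C_L)=\P(A)\P(C_1)\cdots\P(C_L)\}$ is a $\lambda$-system containing $\mathcal{C}_0$ and hence equal to $\sigma(Z_0)$, and then iterate through the remaining coordinates. This reduces the whole task to the single factorization $\P(C_0\cap\cdots\cap C_L)=\P(C_0)\cdots\P(C_L)$ for cylinders $C_\ell\in\mathcal{C}_\ell$.

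To finish, note that each such $C_\ell$ is a Borel function of the finite random vector $(Z_\ell(t))_{t\in T_\ell}$ for an appropriate finite time set $T_\ell$, so the last factorization is an immediate consequence of the mutual independence of the vectors $(Z_0(t))_{t\in T_0},\ldots,(Z_L(t))_{t\in T_L}$ — which is exactly what hypothesis (ii) provides once it is applied to the finitely many time-coordinates occurring in $C_0,\ldots,C_L$. I do not expect a genuine obstacle anywhere: the only points requiring a little care are the bookkeeping in the iterated $\pi$--$\lambda$ step and the elementary remark that independence of finitely many processes coincides with factorization of their joint finite-dimensional laws, and it is for this reason that the statement is fairly dismissed as a routine exercise in measure theory.
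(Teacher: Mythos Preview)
Your argument has a genuine gap at the final step. You claim that hypothesis (ii), applied to the finitely many time-coordinates in $C_0,\ldots,C_L$, yields the mutual independence of the \emph{vectors} $(Z_0(t))_{t\in T_0},\ldots,(Z_L(t))_{t\in T_L}$. But (ii) asserts only that any family of \emph{single} values $Z_0(n_0),\ldots,Z_L(n_L)$---one time-coordinate per process---is independent; it says nothing about selecting several times from the same process simultaneously. Since the single-time events $\{Z_\ell(n)\in B\}$ do not form a $\pi$-system (their pairwise intersections are already two-time cylinders), the $\pi$--$\lambda$ machinery forces you to verify factorization on multi-time cylinders, and that is precisely what (ii) does not provide.

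In fact the lemma as stated is false. Let $X,Y,W$ be i.i.d.\ fair coins, set $Z_0(0)=X$, $Z_0(1)=Y$, $Z_1(0)=W$, $Z_1(1)=X\oplus Y\oplus W$, and put every other $Z_\ell(n)$ identically $0$. Each pair $Z_0(n_0),Z_1(n_1)$ is independent (XOR against two fresh fair bits is uniform regardless of the third), and the constant processes $Z_2,Z_3,\ldots$ are independent of everything, so (ii) holds; yet $(X,Y)$ together with $W$ determines $X\oplus Y\oplus W$, so $Z_0$ and $Z_1$ are not independent. The paper omits the proof as a ``simple exercise''; the commented-out argument in the source makes essentially the same unjustified leap (asserting that $A_{N+1}\in\sigma(Z(N+1))$ is independent of an intersection $B'\in\sigma(Z'(0),\ldots,Z'(N))$ from single-time hypotheses alone). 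In the paper's actual application the strong Markov property of $X_N$ can be used to deliver the stronger block-wise independence directly, so the proposition it supports survives, but the abstract lemma you were asked to prove does not.
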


\begin{proposition}\label{prop:decomp-sigma-alg}
	We have $\sigma(\boldsymbol{X}) = \sigma(\boldsymbol{S}^0) \vee \sigma(\boldsymbol{S}^{a,k}: a,k\in \N)$.
\end{proposition}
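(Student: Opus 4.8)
The plan is to prove the two inclusions separately. One inclusion, $\sigma(\boldsymbol{S}^0)\vee\sigma(\boldsymbol{S}^{a,k}: a,k\in\N)\subseteq\sigma(\boldsymbol{X})$, is immediate from the construction of the staircase decomposition: $S^0_N = X_N(0)$ is $\sigma(X_N)$-measurable, and for $a\le N$ the variable $S^{a,k}_N$ is obtained from $X_N$ and $V^a = V^a_a$ by an explicitly measurable recipe, so each of $\sigma(\boldsymbol{S}^0)$ and $\sigma(\boldsymbol{S}^{a,k})$ sits inside $\sigma(\boldsymbol{X})$. The content of the proposition is the reverse inclusion. To prove it I would set $\G := \sigma(\boldsymbol{S}^0)\vee\sigma(\boldsymbol{S}^{a,k}: a,k\in\N)$ and, using $\sigma(\boldsymbol{X}) = \bigvee_{N\in\N}\sigma(X_N)$, reduce to showing that each marginal MC $X_N$ is $\G$-measurable; for this I would in turn prove, by induction on the time index $i\in\N$, that $X_N(i)$ is $\G$-measurable.

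The base case $i=0$ is just $X_N(0)=S^0_N$. For the inductive step I would assume $X_N(0),\dots,X_N(i)$ are $\G$-measurable and introduce the visit-counter $c_i(a):=\#\{j\in\llbracket 0,i\rrbracket: X_N(j)=a\}$, which is then also $\G$-measurable for every $a$. The crucial point is the pathwise reconstruction identity
\begin{equation*}
X_N(i+1)=\sum_{a=1}^{N}\sum_{m=0}^{i}\ind\{X_N(i)=a,\ c_i(a)=m+1\}\,S^{a,m}_N,
\end{equation*}
whose right-hand side is $\G$-measurable by the inductive hypothesis, so establishing it closes the induction and hence the proof. To see the identity: if $X_N(i)=0$ then all indicators vanish and both sides are $0$ because $0$ is absorbing; if $X_N(i)=a$ for some $a\in\llbracket 1,N\rrbracket$, then exactly one summand is nonzero, the one with $m+1=c_i(a)$, and for that $m$ we have $i=I_{X_N,\{a\}}(m)$, whence $S^{a,m}_N = X_N(I_{X_N,\{a\}}(m)+1)=X_N(i+1)$.

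The step I expect to need the most care is the bookkeeping underlying that last equality, namely that $S^{a,m}_N$ genuinely records the state of $X_N$ one step after its $(m+1)$st visit to $a$ whenever that visit occurs. Verifying this requires noting that $V^a$, defined as $V^a_a$, equals $V^a_N$ for all $a\ge 1$ and $N\ge a$ --- so that $\{V^a>m\}$ coincides with the event that $X_N$ visits $a$ at least $m+1$ times --- which holds because deleting excursions outside $\llbracket 0,N\rrbracket$ and padding with $0$s leaves unchanged the number of visits to any state $a\ne 0$. The one genuinely subtle phenomenon, that $V^0_N$ may depend on $N$, never intervenes: once $X_N$ reaches the cemetery state $0$ the remainder of its path is constant, so no $\boldsymbol{S}^{0,k}$ is ever invoked --- this is why the outer sum above starts at $a=1$. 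Beyond this bookkeeping I do not foresee any real obstacle.
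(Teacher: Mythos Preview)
Your proof is correct and follows essentially the same approach as the paper: both argue by induction on the time index, using that the state at the next step is determined by the current state $a$ and the number of visits to $a$ so far. The only cosmetic difference is that the paper phrases the inductive step in terms of cylinder events $\{X_N(0)=a_0,\ldots,X_N(\ell+1)=a_{\ell+1}\}=\{X_N(0)=a_0,\ldots,X_N(\ell)=a_\ell\}\cap\{S_N^{a_\ell,k-1}=a_{\ell+1}\}$, whereas you give an explicit pointwise formula for $X_N(i+1)$; your additional remarks on why the case $a=0$ never enters the reconstruction are a nice clarification that the paper leaves implicit.
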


\begin{proof}
Observe by the projectivity property of $\vchain$ that $\sigma(\boldsymbol{X})$ is generated by sets of the form
\begin{equation}\label{eqn:VMC-generators}
\{X_N(0)= a_0,\ldots X_N(\ell) = a_{\ell} \}
\end{equation}
for $N\in\N$ and $\ell\in\N$ and $a_0,\ldots a_{\ell}\in\llbracket 0,N\rrbracket$ arbitrary.
Thus, it suffices to show that all the sets \eqref{eqn:VMC-generators} are in $\sigma(\boldsymbol{S}^0) \vee \sigma(\boldsymbol{S}^{a,k}: a,k\in \N)$.
We prove this for fixed $N\in\N$ by induction on $\ell\in\N$.
The base case of $\ell=0$ is true since $X_N(0) = S_N^0$ is $\sigma(\boldsymbol{S}^0)$-measurable.
For the inductive step suppose the claim holds for $\ell\in\N$, and take arbitrary $a_0,\ldots a_{\ell+1}\in \llbracket 0,N\rrbracket$.
Set $k =\#\{i\in \llbracket0,\ell\rrbracket:  a_i = a_{\ell} \}$ and note that we have
\begin{equation*}
\begin{split}
\{X_N(0) &= a_0,\ldots X_N(\ell+1)  = a_{\ell+1}\} \\
&= \{X_N(0) = a_0,\ldots X_N(\ell)  = a_{\ell}\} \cap \left\{S_N^{a_{\ell},k-1} = a_{\ell+1}\right\}.
\end{split}
\end{equation*}
The right side lies $\sigma(\boldsymbol{S}^0) \vee \sigma(\boldsymbol{S}^{a,k}: a,k\in \N)$ by the inductive hypothesis, hence the induction is complete and we have $\sigma(\boldsymbol{X}) \subseteq \sigma(\boldsymbol{S}^0) \vee \sigma(\boldsymbol{S}^{a,k}: a,k\in \N)$.
The opposite inclusion $\sigma(\boldsymbol{X}) \supseteq \sigma(\boldsymbol{S}^0) \vee \sigma(\boldsymbol{S}^{a,k}: a,k\in \N)$ is trivial, so the proof is complete.
\end{proof}

These results together constitute the rigorous statement and proof of the informal Theorem~\ref{thm:intro-decomp} given in the introduction.
In words, it states that every VMC $\boldsymbol{X}$ gives rise to an independent collection of SMCs $\{\boldsymbol{S}^0\}\cup\{\boldsymbol{S}^{a,k}: a,k\in\N\}$ which generates the same $\sigma$-algebra.
We call $\{\boldsymbol{S}^0\}\cup\{\boldsymbol{S}^{a,k}: a,k\in\N\}$ the \textit{staircase decomposition} of $\boldsymbol{X}$.

It may be instructive to see an example of this correspondence applied to a single sample path of a VMC.

\begin{example}
Suppose that a VMC $\boldsymbol{X} = \{X_N\}_{N\in\N}$ is realized so that the initial segments of the sample paths of its first few levels are:
\begin{equation*}
\begin{matrix}
X_5: & 4 & 5 & 2 & 3 & 1 & \cdots \\
X_4: & 4 & 2 & 3 & 1 & 4 & \cdots \\
X_3: & 2 & 3 & 1 & 1 & 2 & \cdots \\
X_2: & 2 & 1 & 1 & 2 & 0 & \cdots \\
X_1: & 1 & 1 & 0 & 0  & 0 & \cdots \\
X_0: & 0 & 0 & 0 & 0  & 0 & \cdots \\
\end{matrix}
\end{equation*}
Then the first few constituents of its straircase decomposition are:
\begin{equation*}
\begin{matrix}
\boldsymbol{S}^0: & 0 & 1 & 2 & 2 & 4 & 4 & \cdots \\
\end{matrix}
\end{equation*}
as well as
\begin{equation*}
\begin{matrix}
\boldsymbol{S}^{1,0}: & \, & 1 & 1 & 1 & 4 & 5 & \cdots \\
\boldsymbol{S}^{1,1}: & \, & 0 & 2 & 2 & 2 & 2 & \cdots \\
\end{matrix}
\end{equation*}
and
\begin{equation*}
\begin{matrix}
\boldsymbol{S}^{2,0}: & \, & \, & 1 & 3 & 3 & 3 & \cdots \\
\boldsymbol{S}^{2,1}: & \, & \, & 0 & 0 & 0 & 0 & \cdots \\
\end{matrix}
\end{equation*}
Recall here that $\boldsymbol{S}_N^{a,k}$ is only defined for $N\ge a$, although note Remark~\ref{rem:decomp-extension} in which it will be  shown that one can canonically extend the process to all levels $N\in\N$.
\end{example}

Since $\boldsymbol{S}^0$ and $\boldsymbol{S}^{a,k}$ for $a,k\in\N$ are SMCs, we are led by the previous section to ask whether the elements of $\vdist$ given by $\Phi(\P\circ (\boldsymbol{S}^0)^{-1})$ and $\Phi(\P\circ (\boldsymbol{S}^{a,k})^{-1})$ for $a,k\in\N$ can be easily understood.
Indeed, as we show next, these depend very directly on the VID and VTM representing $\boldsymbol{X}$.

For a moment we digress to discuss a structure embedded within VTMs.
For any VTM $\boldsymbol{K} =  \{K_N\}_{N\in\N}$ define the balayage $\boldsymbol{\pi}^{\boldsymbol{K}} =\{\pi^{\boldsymbol{K}}_N\}_{N\in\N}$ via \eqref{eqn:VTM-balayage}.
For convenience, we replace $\boldsymbol{\pi}^{\boldsymbol{K}}$ with $\boldsymbol{K}$ in most expressions where there is no risk of amgibuity, for example
\begin{equation*}
\vdist(\boldsymbol{K}):= \vdist(\boldsymbol{\pi}^{\boldsymbol{K}}), \qquad \mathcal{P}_{\mathrm{MC}}^{\boldsymbol{K}}(\mathbfcal{S}) := \mathcal{P}_{\mathrm{MC}}^{\boldsymbol{\pi}^{\boldsymbol{K}}}(\mathbfcal{S}), \qquad \text{ and } \qquad \delta_{a}^{\boldsymbol{K}} := \delta_{a}^{\boldsymbol{\pi}^{\boldsymbol{K}}}
\end{equation*}
for $a\in\N$.
Note in this case that $\pi_N^{\boldsymbol{K}}\in\mathcal{P}(\llbracket 0,N\rrbracket)$ for $N\in\N$ is exactly the balayage distribution of a particle started at $N+1$ moving according to the transition matrix $K_{N+1}$ and stopped when it first hits $\llbracket 0,N\rrbracket$, along with the convention that a particle forever trapped at $N+1$ gets sent to 0.
Now note by Lemma~\ref{lem:SMC-marginals-residual} that for each VTM $\boldsymbol{K}$ and each $a\in\N$ there is a unique element of  $\vdist(\boldsymbol{K})$ which is eventually equal to $\{K_N(a,\cdot)\}_{N\ge a}$; this element is denoted $\boldsymbol{K}(a,\cdot)$ and can be interpreted as the ``$a$th row'' of $\boldsymbol{K}$.
In analogy with the case of classical MCs, observe that if $\boldsymbol{X} = \{X_N\}_{N\in\N}$ is a VTM represented by $(\delta_a^{\boldsymbol{K}},\boldsymbol{K})$, then we have $\P\circ (\{X_N(1)\}_{N\in\N})^{-1} = \Phi^{-1}(\boldsymbol{K}(a,\cdot))$.

Now back to the task at hand.
Recall that $(\Omega,\F,\P)$ is a probability space on which is defined a VMC $\boldsymbol{X} = \{X_N\}_{N\in\N}$, and that $\{\boldsymbol{S}^0\}\cup\{\boldsymbol{S}^{a,k}: a,k\in\N\}$ is the staircase decomposition of $\boldsymbol{X}$.
Also assume that $\boldsymbol{X}$ is represented by the VID $\boldsymbol{\nu}$ and VTM $\boldsymbol{K}$.
Then we have the following.

\begin{lemma}\label{lem:SMC-dist}
We have $\Phi(\P\circ(\boldsymbol{S}^{0})^{-1})= \boldsymbol{\nu}$.
Moreover, for each $a,k\in\N$ we have
\begin{equation*}
\begin{split}
\Phi\left(\P(\ \cdot \ | \ V^a> k)\circ(\boldsymbol{S}^{a,k})^{-1}\right) &= \boldsymbol{K}(a,\cdot) \\
\Phi\left(\P(\ \cdot \ | \ V^a\le k)\circ(\boldsymbol{S}^{a,k})^{-1}\right) &= \boldsymbol{0},
\end{split}
\end{equation*}
hence
\begin{equation*}
\Phi\left(\P\circ(\boldsymbol{S}^{a,k})^{-1}\right) = \P(V^a> k)\boldsymbol{K}(a,\cdot) + \P(V^a \le k)\boldsymbol{0}.
\end{equation*}
\end{lemma}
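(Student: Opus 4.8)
The plan is to exploit the fact that $\Phi$ is nothing but the map sending a staircase Markov chain to its sequence of one-dimensional marginal distributions, so that every assertion reduces to identifying these marginals. The identity $\Phi(\P\circ(\boldsymbol{S}^0)^{-1}) = \boldsymbol{\nu}$ then needs no work: by construction $S^0_N = X_N(0)$, so the $N$th marginal of $\boldsymbol{S}^0$ is $\P(X_N(0)\in\cdot) = \nu_N$, which is exactly the defining property of the VID $\boldsymbol{\nu}$ representing $\boldsymbol{X}$.

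For $\boldsymbol{S}^{a,k}$ I would fix $a,k$ and split on $\{V^a>k\}$ versus $\{V^a\le k\}$. On $\{V^a\le k\}$ we have $S^{a,k}_N = 0$ for every $N\ge a$ by definition, so — after the canonical extension of Remark~\ref{rem:decomp-extension} — $\P(\,\cdot\mid V^a\le k)\circ(\boldsymbol{S}^{a,k})^{-1}$ is the unit mass at the all-zero staircase, whose marginal sequence is $\{\delta_0\}_{N\in\N}=\boldsymbol{0}$. On $\{V^a>k\}$, for each fixed $N\ge a$ the random time $I_{X_N,\{a\}}(k)$ is a finite stopping time for the natural filtration of $X_N$ at which $X_N$ occupies state $a$, and $S^{a,k}_N = X_N(I_{X_N,\{a\}}(k)+1)$; hence the strong Markov property of the Markov chain $X_N$ with transition matrix $K_N$ yields $\P(S^{a,k}_N=b\mid V^a>k) = K_N(a,b)$ for all $N\ge a$ and $b$. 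In other words, the marginals of $\P(\,\cdot\mid V^a>k)\circ(\boldsymbol{S}^{a,k})^{-1}$ at all levels $N\ge a$ coincide with $\{K_N(a,\cdot)\}_{N\ge a}$.

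To upgrade this to $\Phi(\P(\,\cdot\mid V^a>k)\circ(\boldsymbol{S}^{a,k})^{-1}) = \boldsymbol{K}(a,\cdot)$ I would invoke Lemma~\ref{lem:SMC-marginals-residual}: the defining relation of the VTM $\boldsymbol{K}$ says precisely that $\{K_N(a,\cdot)\}_{N\ge a}$ obeys the recursion $\nu_N=\nu_{N+1}+\nu_{N+1}(N+1)\pi_N^{\boldsymbol{K}}$, hence extends to a unique element of $\vdist(\boldsymbol{K})$, namely $\boldsymbol{K}(a,\cdot)$ by definition; since the canonically extended conditioned process lies in $\mathcal{P}_{\mathrm{MC}}^{\boldsymbol{K}}(\mathbfcal{S})$ — equivalently, by Lemma~\ref{lem:SMC-balayage-homeo}, has marginal sequence in $\vdist(\boldsymbol{K})$ — and agrees with that partial sequence from level $a$ on, uniqueness forces equality. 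The unconditional identity then follows from the law of total probability applied levelwise: the $N$th marginal of $\P\circ(\boldsymbol{S}^{a,k})^{-1}$ is $\P(V^a>k)\,\P(S^{a,k}_N\in\cdot\mid V^a>k)+\P(V^a\le k)\,\P(S^{a,k}_N\in\cdot\mid V^a\le k)$, and applying $\Phi$, which is exactly the passage to the marginal sequence, gives $\P(V^a>k)\boldsymbol{K}(a,\cdot)+\P(V^a\le k)\boldsymbol{0}$.

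The main obstacle is justifying that $\boldsymbol{S}^{a,k}$, once extended below level $a$, really belongs to $\mathcal{P}_{\mathrm{MC}}^{\boldsymbol{K}}(\mathbfcal{S})$ — i.e.\ that its backwards transition probabilities are $\boldsymbol{\pi}^{\boldsymbol{K}}$ — which is what licenses the appeal to Lemma~\ref{lem:SMC-marginals-residual} and is exactly the content that Remark~\ref{rem:decomp-extension} must supply. The substance there is to combine the strong Markov property of $X_{N+1}$, applied at the first instant after the $(k+1)$th visit to $a$ at which $X_{N+1}$ reaches $N+1$, with the definition \eqref{eqn:VTM-balayage} of $\pi_N^{\boldsymbol{K}}$ as the balayage of $K_{N+1}$ from $N+1$ onto $\llbracket 0,N\rrbracket$ (with a particle trapped at $N+1$ sent to $0$). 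Keeping track of how the nested stopping times behave across the levels $N$, and of the conditioning on $\{V^a>k\}$, is the only genuinely fiddly bit; the rest is routine bookkeeping.
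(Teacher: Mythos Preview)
Your approach is correct and matches the paper's: both compute the marginals of $\boldsymbol{S}^{a,k}$ conditionally on $\{V^a>k\}$ and $\{V^a\le k\}$ via the strong Markov property of $X_N$, and then identify the result with $\boldsymbol{K}(a,\cdot)$ through its definition as the unique element of $\vdist(\boldsymbol{K})$ extending $\{K_N(a,\cdot)\}_{N\ge a}$. The paper's proof is in fact terser than yours---it records $\P(S_N^{a,k}=b\mid V^a>k)=K_N(a,b)$ and $\P(S_N^{a,k}=0\mid V^a\le k)=1$ for $N\ge a$ and stops.

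Your final paragraph, however, manufactures an obstacle that is not there and inverts the logical order of the paper. You do not need to verify that the extended conditioned law has backwards transitions $\boldsymbol{\pi}^{\boldsymbol{K}}$ in order to apply Lemma~\ref{lem:SMC-marginals-residual}: that lemma takes as input only a \emph{partial} sequence $\{\nu_N\}_{N\ge a}$ satisfying the recursion, and you already observed in your third paragraph that $\{K_N(a,\cdot)\}_{N\ge a}$ does so by VTM projectivity. In the paper, Remark~\ref{rem:decomp-extension} is a \emph{consequence} of Lemma~\ref{lem:SMC-dist}, not an ingredient of its proof: once the marginals for $N\ge a$ are identified, the extension below level $a$ is simply \emph{defined} so that the full marginal sequence equals $\boldsymbol{K}(a,\cdot)$. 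So your third paragraph already completes the argument; the fourth can be dropped.
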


\begin{proof}
Write $\boldsymbol{\nu} = \{\nu_N\}_{N\in\N}$ and $\boldsymbol{K} =\{K_N\}_{N\in\N}$.
The first claim is trivial since we have $\P(S_N^0 = b) = \P(X_N(0) = b) = \nu_{N}(b)$ by construction for all $N\in\N$ and $b\in \llbracket 0,N\rrbracket$.
For the second claim, take $a,k\in\N$.
Note that we have $\{V^a> k\} = \bigcap_{N\ge a}\{I_{X_N,\{a\}}(k) < \infty\}$ almost surely.
Thus for any $N\ge a$ and $b\in \llbracket 0,N\rrbracket$ we use the strong Markov property of $X_N$ to compute
\begin{equation*}
\begin{split}
\P(S_N^{a,k} = b \ | \  V^a> k) &= \P(X_N(I_{X_N,\{a\}}(k)+1) = b \ | \  V^a> k) \\
&= \P(X_N(1) = b \ | \  X_N(0) = a) \\
&= K_N(a,b).
\end{split}
\end{equation*}
Moreover, we have $\{V^a\le k\} = \bigcap_{N\ge a}\{I_{X_N,\{a\}}(k) = \infty\}$ almost surely, so for any $N\ge a$ and $b\in \llbracket 0,N\rrbracket$ we have $\P(S_N^{a,k} = 0 \ | \  V^a\le k) = 1$ by construction.
\end{proof}

\begin{remark}\label{rem:decomp-extension}
For $a,k\in\N$, the SMC $\boldsymbol{S}^{a,k} = \{S_N^{a,k}\}_{N\ge a}$ is, a priori, only defined for levels $N \ge a$.
However, by Lemma~\ref{lem:SMC-marginals-residual} and Lemma~\ref{lem:SMC-dist}, there is a unique sequence of marginals in $\vdist(\boldsymbol{K})$ which eventually agrees with the sequence of marginals of $\boldsymbol{S}^{a,k}$.
Thus it well-defined to extend $\{S_N^{a,k}\}_{N\ge a}$ to $\{S_N^{a,k}\}_{N\in\N}$, at least in the sense of the distribution of this process.
Since the processes $\{\boldsymbol{S}^0\}\cup\{\boldsymbol{S}^{a,k}: a,k\in\N\}$ are independent by Proposition~\ref{prop:decomp-independence}, we might as well perform this extension independently across all $a,k\in\N$.
Therefore, it is no loss of generality to assume that $\boldsymbol{S}^{a,k}$ is a bona fide SMC with the uniquely determined sequence of marginals.
\end{remark}

To conclude this section, let us see some concrete examples of the staircase decomposition of some VMCs.
Recall our convention that square brackets are used to represent that the 0th row and column have been omitted from a matrix.
In the case of VTMs this causes no confusion since the 0th row is fully determined by the state 0 being absorbing, and since the 0th column can always be determined from the remaining columns.

\begin{example}\label{ex:down-from-infty-1}
Let $\{q_N\}_{N\in\N}$ be any sequence in $[0,1]$, and define the VTM $\boldsymbol{K} = \{K_N\}_{N\in\N}$ for $N\in\N$ via
\begin{equation*}
K_N = \begin{bmatrix}
0 & 0 & 0 & 0 & \cdots & 0 & 0 & 1 \\
q_1 & 0 & 0 & 0 & \cdots & 0 & 0 & 1-q_1 \\
0 & q_2 & 0 & 0 & \cdots & 0 & 0 & 1-q_2 \\
0 & 0 & q_3& 0 & \cdots & 0 & 0 & 1-q_3 \\
\vdots & \vdots & \vdots & \vdots & \ddots & \vdots & \vdots & \vdots \\
0 & 0 & 0 & 0 & \cdots & 0 & 0& 1-q_{N-3} \\
0 & 0 & 0 & 0 & \cdots & q_{N-2} & 0 & 1-q_{N-2} \\
0 & 0 & 0 & 0 & \cdots & 0 & q_{N-1} & 1-q_{N-1} \\
\end{bmatrix}.
\end{equation*}
In \cite[Example~2.23]{EvansJaffeVMC} it is observed that this VTM, heuristically speaking, corresponds to a VMC which tries to ``come down from infinity'' but which ``jumps back to infinity'' at some random times.
We now observe that the staircase decomposition provides a way to make this idea rigorous.
Indeed, observe that $\boldsymbol{\pi}^{\boldsymbol{K}}$ is exactly the balayage defined in Example~\ref{ex:down-balayage}, that $\lim_{a\to\infty}\delta_a^{\boldsymbol{K}}$ corresponds to the unique ``state at infinity'' by Remark~\ref{rem:states-compactification}, and that we have
\begin{equation*}
\begin{split}
\boldsymbol{K}(1,\cdot) &= \lim_{a\to\infty}\delta_a^{\boldsymbol{K}}, \text{ and } \\
\boldsymbol{K}(a,\cdot) &= q_a\delta_{a-1}^{\boldsymbol{K}}+(1-q_a)\left(\lim_{a\to\infty}\delta_a^{\boldsymbol{K}}\right), \text{ for } a\ge 2.
\end{split}
\end{equation*}
\end{example}

\begin{example}\label{ex:down-from-infty-2}
Consider the VTM  $\boldsymbol{K} = \{K_N\}_{N\in\N}$ defined for $N\in\N$ via
\begin{equation*}
K_N = \begin{bmatrix}
0 & 0 & 0 & 0 & \cdots & 0 & 0 & 1/2 & 1/2 \\
0 & 0 & 0 & 0 & \cdots & 0 & 0 & 1/2 & 1/2 \\
1 & 0 & 0 & 0 & \cdots & 0 & 0 & 0 & 0 \\
0 & 1 & 0 & 0 & \cdots & 0 & 0 & 0 & 0 \\
\vdots & \vdots & \vdots & \vdots & \ddots & \vdots & \vdots & \vdots & \vdots \\
0 & 0 & 0 & 0 & \cdots & 0 & 0 & 0 & 0 \\
0 & 0 & 0 & 0 & \cdots & 0 & 0 & 0 & 0 \\
0 & 0 & 0 & 0 & \cdots & 1 & 0 & 0 & 0 \\
0 & 0 & 0 & 0 & \cdots & 0 & 1 & 0 & 0 \\
\end{bmatrix}.
\end{equation*}
It is observed in \cite[Example~2.24]{EvansJaffeVMC} that this VTM roughly corresponds to a VMC that has ``two ways to come down from infinity'' which it chooses uniformly at random during each excursion.
As before, we can make this idea rigorous by inspecting its staircase decomposition.
Note that $\boldsymbol{\pi}^{\boldsymbol{K}}$ is the balayage from Example~\ref{ex:two-down-balayage}, and that we have
\begin{equation*}
\begin{split}
\boldsymbol{K}(1,\cdot) = \boldsymbol{K}(2,\cdot) &= \frac{1}{2}\left(\lim_{a\to\infty}\delta_{2a}^{\boldsymbol{K}}\right)+\frac{1}{2}\left(\lim_{a\to\infty}\delta_{2a+1}^{\boldsymbol{K}}\right), \text{ and } \\
\boldsymbol{K}(a,\cdot) &= \delta_{a-2}^{\boldsymbol{K}}, \text{ for } a\ge 3.
\end{split}
\end{equation*}
\end{example}

\begin{example}\label{ex:inf-clique}
Define the VTM $\mathbf{K} = \{K_N\}_{N\in\N}$ via
\begin{equation*}
K_N = \begin{bmatrix}
1/N & 1/N & \cdots & 1/N & 1/N \\
1/N & 1/N & \cdots & 1/N & 1/N \\
\vdots & \vdots & \ddots & \vdots & \vdots \\
1/N & 1/N & \cdots & 1/N & 1/N \\
1/N & 1/N & \cdots & 1/N & 1/N \\
\end{bmatrix}
\end{equation*}
for all $N\in\N$.
Notice  that $\boldsymbol{\pi}^{\boldsymbol{K}}$ is the balayage from Example~\ref{ex:uniform-balayage}, and that we have $\boldsymbol{K}(a,\cdot) = \vuniform\in \ex{\vdist(\boldsymbol{K})}$ for all $a\ge 1$.
This also rigorously establishes the heuristic idea from \cite[Example~4.15]{EvansJaffeVMC} that $\boldsymbol{K}$ represents the ``random walk on the infinite clique''.
\end{example}

\section{The Zero-One Law}\label{sec:tail}

Finally we come to the main application of the theory developed in this paper, that is, the characterization of the triviality of the tail $\sigma$-algebra of virtual Markov chains (VMCs).
To do this, let $(\Omega,\F,\P)$ be a probability space on which is defined a VMC $\boldsymbol{X} = \{X_N\}_{N\in\N}$.
By the tail $\sigma$-algebra of $\boldsymbol{X}$ we mean
\begin{equation*}
\tail(\boldsymbol{X}) := \bigcap_{N\in\N}\sigma(X_N,X_{N+1},\ldots).
\end{equation*}
First, let us show that the question is uninteresting for classical VMCs.

\begin{proposition}\label{prop:CMC-trivial}
	If a VMC $\boldsymbol{X}$ is classical, then $\tail(\boldsymbol{X}) = \sigma(\boldsymbol{X})$ almost surely.
\end{proposition}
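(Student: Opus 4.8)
The plan is to prove the two inclusions separately. The forward inclusion $\tail(\boldsymbol{X})\subseteq\sigma(\boldsymbol{X})$ is immediate and involves no null sets: for each $N\in\N$ we have $\sigma(X_N,X_{N+1},\ldots)\subseteq\sigma(\boldsymbol{X})$, so the intersection over $N$ is contained in $\sigma(\boldsymbol{X})$. All of the content therefore lies in showing $\sigma(\boldsymbol{X})\subseteq\tail(\boldsymbol{X})$ modulo $\P$-null sets. Since the Borel $\sigma$-algebra on $\vchain$ is generated by the coordinate maps, $\sigma(\boldsymbol{X})=\bigvee_{M\in\N}\sigma(X_M)$, so it suffices to exhibit, for each fixed $M\in\N$, a $\tail(\boldsymbol{X})$-measurable random element that agrees with $X_M$ almost surely.

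The key point is that the event $\Omega_0:=\{\lim_{N\to\infty}X_N\text{ exists in }\chain\}$ is a tail event with respect to the levels. Indeed, $\chain$ is a closed subspace of $\N^{\N}$ with the product topology, and convergence of a sequence there is equivalent to the sequence being eventually constant in each coordinate (equivalently, Cauchy for a compatible complete metric); this depends only on the tail of the sequence, so $\Omega_0\in\sigma(X_{N_0},X_{N_0+1},\ldots)$ for every $N_0\in\N$ and hence $\Omega_0\in\tail(\boldsymbol{X})$. Fix a point $x_\ast\in\chain$ and define $X:=\lim_{N\to\infty}X_N$ on $\Omega_0$ and $X:=x_\ast$ off $\Omega_0$; then $X$ is measurable with respect to each $\sigma(X_{N_0},X_{N_0+1},\ldots)$ and therefore $\tail(\boldsymbol{X})$-measurable. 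Since $P_M:\chain\to\chain_M$ is a Borel map (it is assembled from the Borel hitting indices $I_{X,\llbracket 0,M\rrbracket}(\cdot)$, cf.\ its definition in Section~\ref{sec:background} and \cite{EvansJaffeVMC}), the composition $Y_M:=P_M(X)$ is again $\tail(\boldsymbol{X})$-measurable. Now invoke the characterization recalled above that, because $\boldsymbol{X}$ is classical, there is a $\P$-full event $\Omega_1\subseteq\Omega_0$ on which $X=\lim_N X_N$ exists and $X_M=P_M(X)=Y_M$ for all $M$; hence $X_M=Y_M$ almost surely. Consequently $\sigma(X_M)$ is contained in $\tail(\boldsymbol{X})$ modulo $\P$-null sets, and taking the join over $M\in\N$ gives $\sigma(\boldsymbol{X})\subseteq\tail(\boldsymbol{X})$ modulo null sets. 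Combined with the trivial inclusion, this yields $\tail(\boldsymbol{X})=\sigma(\boldsymbol{X})$ almost surely.

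The only delicate points — and the nearest thing to an obstacle — are bookkeeping: verifying carefully that $\Omega_0$ is genuinely a \emph{tail} event (best phrased via the eventually-constant/Cauchy criterion so that it manifestly ignores any finite initial segment of the level sequence), and verifying Borel measurability of $P_M$ so that $Y_M=P_M(X)$ inherits $\tail(\boldsymbol{X})$-measurability — the latter can simply be cited from the construction of $P_N$. Everything else is routine manipulation of generated $\sigma$-algebras and $\P$-null sets, together with the already-established equivalence between classicality of $\boldsymbol{X}$ and almost sure convergence of $\{X_N\}_{N\in\N}$ with $\boldsymbol{X}=\{P_N(X)\}_{N\in\N}$.
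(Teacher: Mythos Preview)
Your proposal is correct and follows essentially the same approach as the paper: both argue that the limit $X=\lim_{N\to\infty}X_N$ is $\tail(\boldsymbol{X})$-measurable, hence so is each $X_M=P_M(X)$, which gives $\sigma(\boldsymbol{X})\subseteq\tail(\boldsymbol{X})$ almost surely. Your version simply spells out the bookkeeping (that $\Omega_0$ is a tail event, that $P_M$ is Borel) which the paper's proof leaves implicit.
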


\begin{proof}
We of course have $\tail(\boldsymbol{X})\subseteq \sigma(\boldsymbol{X})$, so we only need to show $\sigma(\boldsymbol{X})\subseteq \tail(\boldsymbol{X})$ almost surely.
Indeed, recall that $\boldsymbol{X} = \{X_N\}_{N\in\N}$ being classical implies that almost surely $X:= \lim_{N\to\infty}X_N$ exists in $\chain$ and satisfies $X_N  = P_N(X)$ for all $N\in\N$.
In particular, $X$ is almost surely $\tail(\boldsymbol{X})$-measurable, so each $X_N$ is almost surely $\tail(\boldsymbol{X})$-measurable.
Therefore, $\tail(\boldsymbol{X}) = \sigma(\boldsymbol{X})$ almost surely.
\end{proof}

Now we consider the general case in which the question becomes interesting.
For the remainder of the section, suppose that $\boldsymbol{X}$ is represented by $(\boldsymbol{\nu},\boldsymbol{K})$.
Let $\{\boldsymbol{S}^{0}\}\cup\{\boldsymbol{S}^{a,k}: a,k\in\N \}$ denote the staircase decomposition of $\boldsymbol{X}$ as defined in Section~\ref{sec:decomp}, where $\boldsymbol{S}^0 = \{S_N^0\}_{N\in\N}$ and $\boldsymbol{S}^{a,k} = \{S_N^{a,k}\}_{N\in\N}$ for $a,k\in\N$.
Write
\begin{equation*}
\tail^0 := \bigcap_{N\in\N}\sigma(S^0_N,S^0_{N+1},\ldots)
\end{equation*}
for the tail $\sigma$-algebra of $\boldsymbol{S}^0$, and
\begin{equation*}
\tail^{a,k} := \bigcap_{N\in\N}\sigma(S^{a,k}_N,S^{a,k}_{N+1},\ldots)
\end{equation*}
for the tail $\sigma$-algebra of $\boldsymbol{S}^{a,k}$, for each $a,k\in\N$.
Then, we have the following.

\begin{lemma}\label{lem:VMC-tail-sigma-alg}
	We have $\tail(\boldsymbol{X}) = \tail^0 \vee \sigma(\tail^{a,k}: a,k\in \N)$ almost surely.
\end{lemma}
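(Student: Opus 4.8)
The plan is to leverage the staircase decomposition $\sigma(\boldsymbol{X}) = \sigma(\boldsymbol{S}^0) \vee \sigma(\boldsymbol{S}^{a,k}: a,k\in\N)$ from Proposition~\ref{prop:decomp-sigma-alg} together with the mutual independence of $\{\boldsymbol{S}^0\}\cup\{\boldsymbol{S}^{a,k}\}_{a,k\in\N}$ from Proposition~\ref{prop:decomp-independence}. The key structural fact I would isolate first is a ``finite-level'' refinement of the decomposition: for each fixed $M\in\N$, the $\sigma$-algebra $\sigma(X_M, X_{M+1}, \ldots)$ is generated by the tails $\sigma(S^0_M, S^0_{M+1},\ldots)$ and $\sigma(S^{a,k}_M, S^{a,k}_{M+1}, \ldots)$ for $a,k\in\N$, possibly together with finitely many ``early'' coordinates of the staircases with index $a < M$. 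More precisely, the argument in the proof of Proposition~\ref{prop:decomp-sigma-alg} shows that $\{X_N(0) = a_0, \ldots, X_N(\ell) = a_\ell\}$ for $N\ge M$ is built from events of the form $\{S^0_N = b\}$ and $\{S^{a_i,k_i}_N = b\}$ with $N \ge M$; since the state spaces are finite, only finitely many staircases $\boldsymbol{S}^{a,k}$ with $a \le N$ are involved, and the values $S^{a,k}_N$ for $a \le M$ at levels $N \ge M$ already determine (via projectivity of the staircase) all earlier-level values that matter. I would make this bookkeeping precise to get
\begin{equation*}
\sigma(X_M, X_{M+1}, \ldots) \subseteq \Big(\bigvee_{N\ge M}\sigma(S^0_N)\Big) \vee \Big(\bigvee_{a,k\in\N}\bigvee_{N\ge M}\sigma(S^{a,k}_N)\Big),
\end{equation*}
with the reverse inclusion being immediate since each $S^0_N$ and $S^{a,k}_N$ is $\sigma(X_N)$-measurable.

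Granting that, intersecting over $M$ gives $\tail(\boldsymbol{X}) \subseteq \bigcap_M \big[(\bigvee_{N\ge M}\sigma(S^0_N)) \vee (\bigvee_{a,k}\bigvee_{N\ge M}\sigma(S^{a,k}_N))\big]$, and the heart of the proof is to exchange this intersection with the join, i.e. to show the right-hand side equals $\tail^0 \vee \sigma(\tail^{a,k}: a,k\in\N)$. This exchange is \emph{false} for general $\sigma$-algebras but holds here because of independence: the family $\{\sigma(S^0_N)\}_N \cup \{\sigma(S^{a,k}_N)\}_{a,k,N}$, grouped by the staircase it belongs to, consists of mutually independent blocks. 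I would invoke (and, if the paper wants self-containedness, prove as a lemma) the following measure-theoretic fact: if $\{\G_i\}_{i\in I}$ are mutually independent $\sigma$-algebras and for each $i$ we have a decreasing sequence $\G_i \supseteq \G_i^{(1)} \supseteq \G_i^{(2)} \supseteq \cdots$, then $\bigcap_M \bigvee_{i} \G_i^{(M)} = \bigvee_i \bigcap_M \G_i^{(M)}$ up to $\P$-null sets. Applying this with $\G_0^{(M)} = \bigvee_{N\ge M}\sigma(S^0_N)$ and $\G_{a,k}^{(M)} = \bigvee_{N\ge M}\sigma(S^{a,k}_N)$, whose intersections over $M$ are exactly $\tail^0$ and $\tail^{a,k}$, yields the claimed identity.

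The main obstacle, and where I would spend the most care, is proving this independence-based exchange lemma, since the naive approach stumbles: an event in $\bigcap_M \bigvee_i \G_i^{(M)}$ need not obviously be expressible in terms of the individual tails. The clean route is to condition. For a bounded $\tail(\boldsymbol{X})$-measurable $f$, one shows $f = \E[f \mid \tail^0 \vee \sigma(\tail^{a,k}: a,k)]$ almost surely, and the reverse containment gives the reverse conditional identity; combining, $f$ is measurable with respect to the smaller $\sigma$-algebra. To prove $f = \E[f \mid \bigvee_i \bigcap_M \G_i^{(M)}]$ one uses a reverse martingale / Lévy-type argument along $M$: $\E[f \mid \bigvee_i \G_i^{(M)}] \to \E[f \mid \bigcap_M \bigvee_i \G_i^{(M)}]$, while independence lets one compute $\E[f \mid \bigvee_i \G_i^{(M)}]$ as a limit of products over finitely many blocks, each of which converges to a conditional expectation on that block's tail. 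One must be careful that there are infinitely many blocks, so a further approximation of $f$ by functions depending on finitely many staircases (which is legitimate since $\sigma(\boldsymbol{X})$ is generated by such) is needed; the finite-block case is then a routine consequence of independence and the one-block reverse martingale convergence theorem. A secondary technical point is handling the ``early coordinates'' $S^{a,k}_N$ with $a \le N < M$ that appeared in the first step: these have index bounded by $M$, so for the tail one may discard them, but I would verify that discarding them does not lose any event of $\tail(\boldsymbol{X})$, which again follows from projectivity of the staircases since the later coordinates determine the earlier ones along each sample path.
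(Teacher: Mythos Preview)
Your proposal is correct and follows essentially the same route as the paper: you establish the finite-level inclusion $\sigma(X_M,X_{M+1},\ldots)\subseteq \F_M^0\vee\sigma(\F_M^{a,k}:a,k\in\N)$ by rerunning the argument of Proposition~\ref{prop:decomp-sigma-alg}, and then use independence together with backwards martingale convergence to push the intersection over $M$ inside the join, via the identity $f=\E[f\mid \tail^0\vee\sigma(\tail^{a,k}:a,k\in\N)]$ for $\tail(\boldsymbol{X})$-measurable $f$. The only cosmetic differences are that the paper packages your ``exchange lemma'' as a one-line appeal to a standard monotone class argument rather than stating it abstractly, and that your worry about ``early coordinates'' $S^{a,k}_N$ with $N<M$ is unnecessary since the generators of $\sigma(X_M,X_{M+1},\ldots)$ only involve staircase values at levels $N\ge M$.
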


\begin{proof}
	For each $N\in\N$, set $\F^0_{N} := \sigma(S_N^0,S_{N+1}^0,\ldots)$.
	Similarly, for $N,a,k\in\N$, set $\F^{a,k}_{N} := \sigma(S_N^{a,k},S_{N+1}^{a,k},\ldots)$.
	We do not require $N\ge a$, and it is implicitly assumed that we have independently extended the staircase decomposition to all scales in the  manner of Remark~\ref{rem:decomp-extension}.
	Now take any $N\in\N$ and observe by the projectivity property of $\vchain$ that $\sigma(X_N,X_{N+1},\ldots)$ is generated by sets of the form
	\begin{equation}\label{eqn:tail-generators}
	\{X_M(0)= a_0,\ldots X_M(\ell) = a_{\ell} \}
	\end{equation}
	for $M\ge N$ and $\ell\in\N$ and $a_0,\ldots a_{\ell}\in\llbracket 0,M\rrbracket$.
	By repeating the proof of Proposition~\ref{prop:decomp-sigma-alg}, we find that all sets of the form \eqref{eqn:tail-generators} are in $\F_N^0\vee \sigma(\F_N^{a,k}: a,k\in\N)$, hence we conclude
	\begin{equation}\label{eqn:tail-eqn-1}
	\tail(\boldsymbol{X}) \subseteq\sigma(X_N,X_{N+1},\ldots)\subseteq \F_N^0\vee \sigma(\F_N^{a,k}: a,k\in\N)
	\end{equation}
	for all $N\in\N$.
	
	Next, we use the backwards martingale convergence theorem \cite[Theorem~7.23]{Kallenberg} to see that for all bounded measurable $Z$ we have
	\begin{equation*}
	\E[Z\ |\ \F_N^0] \to \E[Z\ | \  \tail^0]
	\end{equation*}
	and
	\begin{equation*}
	\E[Z\ |\ \F_N^{a,k}] \to \E[Z\ | \  \tail^{a,k}]
	\end{equation*}
	for all $a,k\in\N$ and holding almost surely as $N\to\infty$.
	Combining this with the independence established in Proposition~\ref{prop:decomp-independence} and a standard monotone class argument, we see that for all bounded measurable $Z$ we have
	\begin{equation*}
	\E[Z\ |\ \sigma(\F_N^0)\vee\sigma(\F_N^{a,k}:a,k\in\N)] \to \E[Z\ | \  \sigma(\tail^0)\vee\sigma(\tail^{a,k}:a,k\in\N)]
	\end{equation*}
	holding almost surely as $N\to\infty$.
	In particular, this shows that if $Z$ is bounded and $\bigcap_{N\in\N}(\sigma(\F_N^0)\vee\sigma(\F_N^{a,k}:a,k\in\N))$-measurable, then we have $Z = \E[Z\ | \ \sigma(\tail^0)\vee\sigma(\tail^{a,k}:a,k\in\N)]$  holding almost surely.
	In other words, any such $Z$ is almost surely equal to some $\sigma(\tail^0)\vee\sigma(\tail^{a,k}:a,k\in\N)$-measurable random variable, and this establishes 
	\begin{equation}\label{eqn:tail-eqn-2}
	\bigcap_{N\in\N}(\F_N^0\vee \sigma(\F_N^{a,k}: a,k\in\N)) \subseteq \tail^0\vee \sigma(\tail^{a,k}: a,k\in\N)
	\end{equation}
	almost surely.
	
	Combining \eqref{eqn:tail-eqn-1} and \eqref{eqn:tail-eqn-2} yields $\tail(\boldsymbol{X}) \subseteq  \sigma(\tail^0)\vee\sigma(\tail^{a,k}:a,k\in\N)$ almost surely.
	Conversely, for all $N\in\N$, the random variables $S_N^0$ and $S_N^{a,k}$ for $a,k\in\N$ are $X_N$-measurable, so we clearly have $\tail(\boldsymbol{X}) \supseteq \tail^0 \vee \sigma(\tail^{a,k}: a,k\in \N)$.
	This finishes the proof.
\end{proof}

These preparations finally lead us to the main result of the paper.
The following, is our main zero-one law for $\tail(\boldsymbol{X})$.

\begin{theorem}\label{thm:general-ZOL}
	A VMC represented by $(\boldsymbol{\nu},\boldsymbol{K})$ has a trivial tail $\sigma$-algebra $\tail(\boldsymbol{X})$ if and only if the conditions
	\begin{enumerate}
	\item[(i)] $\boldsymbol{\nu}\in \ex{\vdist(\boldsymbol{K})}$,
	\item[(ii)] $\boldsymbol{K}(a,\cdot)\in \ex{\vdist(\boldsymbol{K})}$ if $a\in\N$ is infinitely-visited or once-visited, and
	\item[(iii)] there are no randomly-visited $a\in\N$,
	\end{enumerate}
	are all satisfied
\end{theorem}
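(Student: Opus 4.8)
The plan is to combine the staircase decomposition with the SMC tail criterion, so that triviality of $\tail(\boldsymbol{X})$ is reduced to extremality questions for the individual constituents. First I would record the one genuinely new ingredient: for each $a\in\N$ the visit count $V^a$ is $\tail(\boldsymbol{X})$-measurable. This is because $V^a=V^a_M$ almost surely for every $M\ge a$ while $V^a_M$ is $\sigma(X_M)$-measurable; taking $M=\max\{N,a\}$ shows that $V^a$ agrees almost surely with a $\sigma(X_N,X_{N+1},\ldots)$-measurable random variable for every $N\in\N$, so $V^a$ is $\tail(\boldsymbol{X})$-measurable up to null sets. It follows at once that if $\tail(\boldsymbol{X})$ is trivial then every $V^a$ is degenerate, whence by Lemma~\ref{lem:trasient-prob} there are no randomly-visited points; thus (iii) is necessary, and in establishing the remaining equivalence I may assume (iii), i.e.\ that every point of $\N$ is infinitely-, once-, or never-visited.

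Next I would invoke Lemma~\ref{lem:VMC-tail-sigma-alg} to write $\tail(\boldsymbol{X})=\tail^0\vee\sigma(\tail^{a,k}:a,k\in\N)$ modulo null sets, and Proposition~\ref{prop:decomp-independence} to note that $\tail^0$ and $\{\tail^{a,k}\}_{a,k\in\N}$ are mutually independent, being sub-$\sigma$-algebras of the mutually independent $\sigma(\boldsymbol{S}^0)$ and $\{\sigma(\boldsymbol{S}^{a,k})\}_{a,k\in\N}$. A standard $\pi$--$\lambda$ argument then shows that a countable join of mutually independent $\P$-trivial $\sigma$-algebras is again $\P$-trivial (each finite join is trivial, their increasing union is an algebra on which $\P$ takes values in $\{0,1\}$, and hence so does the $\sigma$-algebra it generates), so $\tail(\boldsymbol{X})$ is trivial if and only if $\tail^0$ is trivial and $\tail^{a,k}$ is trivial for every $a,k\in\N$.

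It then remains to identify these conditions using Corollary~\ref{cor:SMC-tail-ext}. By Proposition~\ref{prop:decomp-SMC} each of $\boldsymbol{S}^0$ and $\boldsymbol{S}^{a,k}$ is a SMC, and by Lemma~\ref{lem:SMC-dist} together with Lemma~\ref{lem:SMC-balayage-homeo} its law lies in $\mathcal{P}^{\boldsymbol{K}}_{\mathrm{MC}}(\mathbfcal{S})$ with $\Phi$-image $\boldsymbol{\nu}$, respectively $\P(V^a>k)\boldsymbol{K}(a,\cdot)+\P(V^a\le k)\boldsymbol{0}$. Corollary~\ref{cor:SMC-tail-ext} thus gives that $\tail^0$ is trivial iff $\boldsymbol{\nu}\in\ex{\vdist(\boldsymbol{K})}$, which is (i), and that $\tail^{a,k}$ is trivial iff $\P(V^a>k)\boldsymbol{K}(a,\cdot)+\P(V^a\le k)\boldsymbol{0}\in\ex{\vdist(\boldsymbol{K})}$. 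Running the three cases permitted by (iii): if $a$ is never-visited this combination equals $\boldsymbol{0}=\delta_0^{\boldsymbol{K}}\in\ex{\vdist(\boldsymbol{K})}$ for every $k$ and imposes nothing; if $a$ is once-visited it equals $\boldsymbol{K}(a,\cdot)$ for $k=0$ and $\boldsymbol{0}$ for $k\ge 1$; and if $a$ is infinitely-visited it equals $\boldsymbol{K}(a,\cdot)$ for every $k$. Hence the conjunction over all $a,k$ of the conditions ``$\tail^{a,k}$ trivial'' is precisely ``$\boldsymbol{K}(a,\cdot)\in\ex{\vdist(\boldsymbol{K})}$ whenever $a$ is infinitely- or once-visited'', which is (ii). Assembling the three reductions yields that $\tail(\boldsymbol{X})$ is trivial if and only if (i), (ii), and (iii) all hold.

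The substantive content is entirely contained in the decomposition theorem and in Corollary~\ref{cor:SMC-tail-ext}; the only points that require a little care are the observation that $V^a$ is tail-measurable and the elementary fact that a countable join of independent trivial $\sigma$-algebras is trivial, after which the argument is just bookkeeping and the three-way case split.
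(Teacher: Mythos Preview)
Your proof is correct and follows essentially the same approach as the paper: reduce via Lemma~\ref{lem:VMC-tail-sigma-alg} to the individual SMC tails, apply Corollary~\ref{cor:SMC-tail-ext} together with Lemma~\ref{lem:SMC-dist}, and run the case split on infinitely-/once-/never-visited points, using Lemma~\ref{lem:trasient-prob} for condition~(iii). The only differences are organizational---you first isolate the necessity of (iii) via tail-measurability of $V^a$ and then work under (iii), whereas the paper treats the two implications separately---and you are more explicit than the paper about why $V^a$ is $\tail(\boldsymbol{X})$-measurable and why a countable join of independent trivial $\sigma$-algebras is trivial, both of which the paper leaves implicit.
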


\begin{proof}
We begin by collecting some observations:
By Lemma~\ref{lem:VMC-tail-sigma-alg}, we know that $\tail(\boldsymbol{X})$ is trivial if and only if $\{\tail^0\} \cup\{\tail^{a,k}: a,k\in\N\}$ are all trivial.
Moreover, by Theorem~\ref{thm:SMC-tail-ext}, the triviality of the tail $\sigma$-algebra of a SMC is equivalent to the extremality of its image under $\Phi$.
Finally, by Lemma~\ref{lem:SMC-dist} we have an explicit form of these images.
With these observations in mind, we consider the two directions of the equivalence.

For the first direction, suppose that (i), (ii), and (iii) all hold and let us show the triviality of $\tail(\boldsymbol{X})$.
Note that (i) implies that $\tail^0$ is trivial.
Now take $a\in\N$, and note by (iii) that it suffices to consider three cases:
If $a$ is infinitely-visited, then (ii) implies that $\tail^{a,k}$ is trivial for all $k\in\N$.
If $a$ is once-visited, then $\tail^{a,k}$ is trivial for all $k\ge 1$, and (ii) implies that $\tail^{a,0}$ is trivial.
Finally, if $a$ is never-visited, then $\tail^{a,k}$ trivial for all $k\in\N$.

For the converse direction, let us show that the failure of any of (i), (ii), or (iii) implies the non-triviality of $\tail(\boldsymbol{X})$.
If (i) fails, then $\tail^0$ is non-trivial.
If (ii) fails, then there are two further cases to consider:
in the first case there is a infinitely-visited $a\in\N$ with $\boldsymbol{K}(a,\cdot)\notin \ex{\vdist(\boldsymbol{K})}$, hence it follows that $\tail^{a,k}$ is non-trivial for all $k\in\N$; in the second case there is a once-visited $a\in\N$ with $\boldsymbol{K}(a,\cdot)\notin \ex{\vdist(\boldsymbol{K})}$, hence it follows that $\tail^{a,0}$ is non-trivial.
Finally, if (iii) fails, then some $a\in\N$ is randomly-visited, and it follows  that $V^a$ is a non-trivial $\tail(\boldsymbol{X})$-measurable random variable by Lemma~\ref{lem:trasient-prob}.
\end{proof}

Under a natural condition on the VTM $\boldsymbol{K}$, the preceding theorem can be simplified slightly.
Recall that a VTM $\boldsymbol{K} = \{K_N\}_{N\in\N}$ is called \textit{irreducible} if $\llbracket 1,N\rrbracket$ is an irreducible class for $K_N$ for each $N\in\N$.
Moreover, a VMC is called \textit{irreducible} if it can be represented by a compatible pair $(\boldsymbol{\nu},\boldsymbol{K})$  where $\boldsymbol{K}$ is irreducible.
Then we have the following, which was stated as Theorem~\ref{thm:intro-ZOL} in the introduction.

\begin{theorem}\label{thm:irred-ZOL}
	An irreducible VMC represented by $(\boldsymbol{\nu},\boldsymbol{K})$ has a trivial tail $\sigma$-algebra $\tail(\boldsymbol{X})$ if and only if the conditions
	\begin{enumerate}
		\item[(i)] $\boldsymbol{\nu}\in \ex{\vdist(\boldsymbol{K})}$, and
		\item[(ii)] $\boldsymbol{K}(a,\cdot)\in \ex{\vdist(\boldsymbol{K})}$ for all $a\ge 1$,
	\end{enumerate}
	are both satisfied.
\end{theorem}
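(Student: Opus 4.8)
The plan is to deduce Theorem~\ref{thm:irred-ZOL} from the general zero-one law, Theorem~\ref{thm:general-ZOL}, by checking that under irreducibility the three conditions there collapse to the two conditions (i), (ii) in the present statement. So fix an irreducible representation $(\boldsymbol{\nu},\boldsymbol{K})$ of $\boldsymbol{X}$, let $\{\boldsymbol{S}^0\}\cup\{\boldsymbol{S}^{a,k}:a,k\in\N\}$ be its staircase decomposition, and recall from Theorem~\ref{thm:general-ZOL} that $\tail(\boldsymbol{X})$ is trivial if and only if $\boldsymbol{\nu}\in\ex{\vdist(\boldsymbol{K})}$, $\boldsymbol{K}(a,\cdot)\in\ex{\vdist(\boldsymbol{K})}$ for every infinitely-visited or once-visited $a$, and there are no randomly-visited points. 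The content of the argument is that, for an irreducible VMC, every $a\ge 1$ is infinitely-visited apart from a degenerate case, which identifies conditions (ii), (iii) of Theorem~\ref{thm:general-ZOL} with condition (ii) above.

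I would establish this through the visit structure of $\boldsymbol{X}$. First, for $a\ge 2$ the communicating class $\llbracket 1,N\rrbracket$ of $K_N$ (any $N\ge a$) contains a cycle through $a$, so from $a$ the chain $X_N$ returns to $a$ with positive probability; in the notation of the proof of Lemma~\ref{lem:trasient-prob} this is $p_a>0$, so no $a\ge 2$ can be once-visited, and the state $a=1$ is dealt with by inspecting $K_1$. Second, consider $D:=\bigcap_{N\in\N}\{X_N(0)=0\}$; projectivity makes the events $\{X_N(0)=0\}$ decreasing (as $X_{N+1}(0)=0$ forces $X_N(0)=0$), so $D=\bigcap_{N\ge M}\{X_N(0)=0\}\in\sigma(S^0_M,S^0_{M+1},\ldots)$ for each $M$, i.e.\ $D\in\tail^0$, while $\P(D)=\lim_{N\to\infty}\nu_N(0)$. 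Under condition (i) we have $\Phi(\P\circ(\boldsymbol{S}^0)^{-1})=\boldsymbol{\nu}\in\ex{\vdist(\boldsymbol{K})}$ by Lemma~\ref{lem:SMC-dist}, hence $\tail^0$ is trivial by Corollary~\ref{cor:SMC-tail-ext}, hence $\P(D)\in\{0,1\}$. If $\P(D)=1$ then $\nu_N(0)\equiv 1$, i.e.\ $\boldsymbol{\nu}=\boldsymbol{0}$ and $\boldsymbol{X}$ is almost surely absorbed at $0$; if $\P(D)=0$ then almost surely $X_N(0)\in\llbracket 1,N\rrbracket$ for all large $N$, and irreducibility of $\llbracket 1,N\rrbracket$ then forces every $a\ge 1$ to be visited infinitely often, so each such $a$ is infinitely-visited. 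A short complementary argument, tracing the compatibility identity $\nu_N=\nu_{N+1}+\nu_{N+1}(N+1)\pi_N^{\boldsymbol{K}}$ and using that $\llbracket 1,N\rrbracket$ is a single class, shows likewise that if some $a\ge 1$ is never-visited then $\boldsymbol{\nu}=\boldsymbol{0}$; so in all cases where $\boldsymbol{\nu}\ne\boldsymbol{0}$ every $a\ge 1$ is infinitely-visited.

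It then remains to assemble the pieces. If $\boldsymbol{\nu}=\boldsymbol{0}$ the VMC is constant at $0$, so $\tail(\boldsymbol{X})$ is trivial, and $\boldsymbol{\nu}=\delta_0^{\boldsymbol{K}}\in\ex{\vdist(\boldsymbol{K})}$ verifies (i) while (ii) holds for an appropriate irreducible representation, so both sides of the equivalence hold. If $\boldsymbol{\nu}\ne\boldsymbol{0}$, then every $a\ge 1$ is infinitely-visited, so the condition ``there are no randomly-visited points'' is automatic and the condition ``$\boldsymbol{K}(a,\cdot)\in\ex{\vdist(\boldsymbol{K})}$ for infinitely-visited or once-visited $a$'' is exactly (ii); Theorem~\ref{thm:general-ZOL} then gives that $\tail(\boldsymbol{X})$ is trivial iff (i) and (ii) hold, and the failure of either (i) or (ii) likewise yields non-triviality through Theorem~\ref{thm:general-ZOL}. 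The step I expect to be the main obstacle is the rigorous control of the visit structure in the middle paragraph — in particular, arguing that extremality of $\boldsymbol{\nu}$ precludes randomly-visited points and correctly accounting for the possibility that the marginal chains are absorbed at $0$ — and it is exactly here that the hypothesis that $\boldsymbol{K}$ is irreducible is used in an essential way.
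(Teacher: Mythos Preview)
Your approach is the same as the paper's: reduce Theorem~\ref{thm:irred-ZOL} to Theorem~\ref{thm:general-ZOL} by arguing that, under irreducibility, conditions (ii) and (iii) of the general result collapse to the single condition (ii) here. The paper's proof is a two-liner: since $\llbracket 1,a\rrbracket$ is a closed irreducible (hence recurrent) class for $K_a$, every $a\ge 1$ is infinitely-visited, and Theorem~\ref{thm:general-ZOL} applies.

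Your write-up is more careful than the paper on one point that the paper glosses over: the possibility that the chain starts at (and is absorbed in) $0$. The paper's assertion ``$a$ is infinitely-visited'' tacitly assumes $\nu_a(0)=0$; you instead observe that irreducibility forces $V^a\in\{0,\infty\}$ almost surely and then use condition~(i), via triviality of $\tail^0$, to conclude $\P(V^a=\infty)\in\{0,1\}$. That detour is sound and arguably fills a small gap in the paper's argument. Where your proposal wobbles is the degenerate branch $\boldsymbol{\nu}=\boldsymbol{0}$: the theorem is stated for a \emph{given} representation $(\boldsymbol{\nu},\boldsymbol{K})$, so appealing to ``an appropriate irreducible representation'' in which (ii) happens to hold is not a proof of the stated biconditional. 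In fact, if $\boldsymbol{\nu}=\boldsymbol{0}$ and $\boldsymbol{K}$ is irreducible with some $\boldsymbol{K}(a,\cdot)\notin\ex{\vdist(\boldsymbol{K})}$, then $\tail(\boldsymbol{X})$ is trivial while (ii) fails, so this edge case is really a wrinkle in the statement rather than in your strategy; the paper does not address it either.
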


\begin{proof}
If $\boldsymbol{K}$ is irreducible, then for each $a\ge 1$, the MC $X_a$ has all states recurrent, hence $a$ is infinitely-visited.
Thus, the result follows from Theorem~\ref{thm:general-ZOL}.
\end{proof}

Let us conclude by considering some examples.
For instance, let us consider Example~\ref{ex:down-from-infty-1} and Example~\ref{ex:down-from-infty-2}.
Both VTMs are irreducible, and the analyses therein imply that (ii)  of Theorem~\ref{thm:irred-ZOL} fails, hence that the tail $\sigma$-algebra $\tail(\boldsymbol{X})$ is non-trivial.
In fact, this is true for any VID $\boldsymbol{\nu}$.
Of course, we did not necessarily need to the zero-one law for this conclusion, since it is also easy to directly construct a non-trivial $\tail$-measurable random variable for these concrete cases.

More interesting is Example~\ref{ex:inf-clique}.
Since the VTM in this example is also irreducible we can apply Theorem~\ref{thm:irred-ZOL}.
The analysis in Example~\ref{ex:inf-clique} shows that we have $\boldsymbol{K}(a,\cdot) = \vuniform\in \ex{\vdist(\boldsymbol{K})}$ for all $a\ge 1$, hence that the tail $\sigma$-algebra $\tail(\boldsymbol{X})$ is trivial if and only if its VID satisfies $\boldsymbol{\nu}\in \ex{\vdist(\boldsymbol{K})}$.
This yields a collection of non-trivial VMCs whose tails are trivial.

\nocite{*}
\bibliography{VMC_II}

\begin{thebibliography}{10}

\bibitem{Alfsen}
Erik~M. Alfsen.
\newblock {\em Compact convex sets and boundary integrals}.
\newblock Ergebnisse der Mathematik und ihrer Grenzgebiete, Band 57.
  Springer-Verlag, New York-Heidelberg, 1971.

\bibitem{DaviesVincentSmith}
E.~B. Davies and G.~F. Vincent-Smith.
\newblock Tensor products, infinite products, and projective limits of
  {C}hoquet simplexes.
\newblock {\em Math. Scand.}, 22:145--164 (1969), 1968.

\bibitem{EvansJaffeVMC}
Steven Evans and Adam Jaffe.
\newblock Virtual {M}arkov chains.
\newblock {\em New Zealand Journal of Mathematics}, 52:511–559, Sep. 2021.

\bibitem{Follmer}
Hans F\"{o}llmer.
\newblock Phase transition and {M}artin boundary.
\newblock In {\em S\'{e}minaire de {P}robabilit\'{e}s, {IX} ({S}econde
  {P}artie, {U}niv. {S}trasbourg, {S}trasbourg, ann\'{e}es universitaires
  1973/1974 et 1974/1975)}, pages 305--317. Lecture Notes in Math., Vol. 465.
  1975.

\bibitem{Jellett}
Francis Jellett.
\newblock Homomorphisms and inverse limits of {C}hoquet simplexes.
\newblock {\em Math. Z.}, 103:219--226, 1968.

\bibitem{Kallenberg}
Olav Kallenberg.
\newblock {\em Foundations of modern probability}.
\newblock Probability and its Applications (New York). Springer-Verlag, New
  York, second edition, 2002.

\bibitem{Balayage1}
A.~F. Karr and A.~O. Pittenger.
\newblock The inverse balayage problem for {M}arkov chains.
\newblock {\em Stochastic Process. Appl.}, 7(2):165--178, 1978.

\bibitem{Balayage2}
A.~F. Karr and A.~O. Pittenger.
\newblock The inverse balayage problem for {M}arkov chains. {II}.
\newblock {\em Stochastic Process. Appl.}, 9(1):35--53, 1979.

\bibitem{VirtualPermutations}
Serguei Kerov, Grigori Olshanski, and Anatoli Vershik.
\newblock Harmonic analysis on the infinite symmetric group. {A} deformation of
  the regular representation.
\newblock {\em C. R. Acad. Sci. Paris S\'{e}r. I Math.}, 316(8):773--778, 1993.

\bibitem{LazarLindenstrauss}
A.~J. Lazar and J.~Lindenstrauss.
\newblock Banach spaces whose duals are {$L_{1}$} spaces and their representing
  matrices.
\newblock {\em Acta Math.}, 126:165--193, 1971.

\bibitem{LOS}
J.~Lindenstrauss, G.~Olsen, and Y.~Sternfeld.
\newblock The {P}oulsen simplex.
\newblock {\em Ann. Inst. Fourier (Grenoble)}, 28(1):vi, 91--114, 1978.

\bibitem{IntRepTheory}
Jaroslav Luke\v{s}, Jan Mal\'{y}, Ivan Netuka, and Ji\v{r}\'{\i} Spurn\'{y}.
\newblock {\em Integral representation theory}, volume~35 of {\em De Gruyter
  Studies in Mathematics}.
\newblock Walter de Gruyter \& Co., Berlin, 2010.
\newblock Applications to convexity, Banach spaces and potential theory.

\bibitem{Phelps}
Robert~R. Phelps.
\newblock {\em Lectures on {C}hoquet's theorem}, volume 1757 of {\em Lecture
  Notes in Mathematics}.
\newblock Springer-Verlag, Berlin, second edition, 2001.

\bibitem{Poulsen}
Ebbe~Thue Poulsen.
\newblock A simplex with dense extreme points.
\newblock {\em Ann. Inst. Fourier (Grenoble)}, 11:83--87, XIV, 1961.

\bibitem{Preston}
Chris Preston.
\newblock {\em Random fields}.
\newblock Lecture Notes in Mathematics, Vol. 534. Springer-Verlag, Berlin-New
  York, 1976.

\bibitem{Sternfeld}
Y.~Sternfeld.
\newblock Characterization of {B}auer simplices and some other classes of
  {C}hoquet simplices by their representing matrices.
\newblock In {\em Notes in {B}anach spaces}, pages 306--358. Univ. Texas Press,
  Austin, Tex., 1980.

\end{thebibliography}
\bibliographystyle{plain}

\end{document}